\documentclass[a4paper,11pt]{amsart}
\usepackage[T1]{fontenc}
\usepackage[utf8]{inputenc}
\usepackage{lmodern}

\usepackage{amsmath, amsthm,amssymb, color, comment,graphicx}
\theoremstyle{definition} %%% for statements in roman typeface

 \newtheorem{definition}{Definition}[section]
  \newtheorem{coro}{Corollary}[section]
 \newtheorem{remark}[definition]{Remark}
 \newtheorem{example}[definition]{Example}
 
  %%% for statements without numbering

\theoremstyle{plain}      %%% for statements in italic typeface

 \newtheorem{proposition}[definition]{Proposition}
 \newtheorem{theorem}[definition]{Theorem}
 
 \newtheorem{lemma}[definition]{Lemma}

\newtheorem*{theorem*}{Theorem}

\newcommand{\R}{\mathbb{R}}
\newcommand{\C}{\mathbb{C}}
\newcommand{\Z}{\mathbb{Z}}
\newcommand{\Q}{\mathbb{Q}}
\renewcommand{\P}{\mathbb{P}}
\newcommand{\SL}{\mathrm{SL}}
\renewcommand{\epsilon}{\varepsilon}
\DeclareMathOperator{\re}{Re}
\DeclareMathOperator{\im}{Im}
\DeclareMathOperator{\card}{Card}
\DeclareMathOperator{\inter}{Int}
\DeclareMathOperator{\homo}{Hom}
\DeclareMathOperator{\ord}{ord}
\DeclareMathOperator{\fraction}{Frac}
\title{Volume function and Mahler measure of exact polynomials}
\date{}
\author{Antonin Guilloux} 
\address{Sorbonne Universit\'e, IMJ-PRG, 75252 Paris c\'edex 05, France}
\email{antonin.guilloux@imj-prg.fr}

% coauthor information
\author{Julien March\'e}
\address{Sorbonne Universit\'e, IMJ-PRG, 75252 Paris c\'edex 05, France}
\email{julien.marche@imj-prg.fr}

\begin{document}
%\tableofcontents

\begin{abstract}
We study a class of 2-variable polynomials called exact polynomials which contains $A$-polynomials of knot complements. The Mahler measure of these polynomials can be computed in terms of a volume function defined on the vanishing set of the polynomial. We prove that the local extrema of the volume function are on the 2-dimensional torus and give a closed formula for the Mahler measure in terms of these extremal values. This formula shows that the Mahler measure of an irreducible and exact polynomial divided by $\pi$ is greater than the amplitude of the volume function. We also prove a $K$-theoretical criterium for a polynomial to be a factor of an $A$-polynomial and give a topological interpretation of its Mahler measure. 
\end{abstract}

\maketitle

\section*{Introduction}

A polynomial $P\in \C[X^{\pm 1}, Y^{\pm 1}]$ vanishing on a curve $C\subset (\C^*)^2$ is said to be {\it exact} if there exists a function $V:C\to \R$ (called volume function) satisfying 
$$dV=\log |y| d\arg x-\log |x| d\arg y.$$
In this article, we study the properties of volume functions $V$. For instance we show that the volume function extends continuously to the smooth projective model $\hat{C}$ of $C$ and study its local extrema. Our main result concerning $V$, proved in Section 2 is that the extrema of a volume function are only attained at so-called \emph{toric points}:
\begin{theorem*}
The local extrema of $V$ are necessarily finite points of $\hat{C}$ projecting to pairs $(x,y)\in(\C^*)^2$ satisfying $|x|=|y|=1$. 
\end{theorem*}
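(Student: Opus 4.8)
The plan is to argue locally: fix $p\in\hat C$, assume $p$ is a local extremum of $V$, and show that $x(p),y(p)\in\C^*$ with $|x(p)|=|y(p)|=1$. Equivalently, at every other point we shall exhibit values of $V$ both above and below $V(p)$ in every neighbourhood. Two cases arise, according to whether $p$ projects into $(\C^*)^2$ or is a zero or pole of $x$ or of $y$.

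\emph{Finite points.} Suppose $p$ projects to $(x_0,y_0)\in(\C^*)^2$, so that $\xi=\log x$ and $\eta=\log y$ are single-valued and holomorphic near $p$; put $a=\log|x_0|$, $b=\log|y_0|$. Rewrite $dV=\re(\eta)\,d\im(\xi)-\re(\xi)\,d\im(\eta)$; since $d\xi\wedge d\eta=0$ on a curve, the holomorphic form $\xi\,d\eta-\eta\,d\xi$ is closed, with a holomorphic primitive $G$, and one checks by differentiation that $V=-\tfrac12\im(\bar\xi\,\eta+G)+\mathrm{const}$. Peeling off the constant and the (anti)holomorphic parts of $\bar\xi\,\eta+G$ yields, near $p$,
\[
V-V(p)=\im\bigl(b(\xi-\xi(p))-a(\eta-\eta(p))\bigr)-\tfrac12\im\bigl(\overline{(\xi-\xi(p))}\,(\eta-\eta(p))\bigr)+\im H ,
\]
where $H$ is holomorphic, vanishes at $p$ to order at least $\ord_p(x-x_0)+\ord_p(y-y_0)$. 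Assume $(a,b)\neq(0,0)$; exchanging $x$ and $y$ (which replaces $V$ by $-V$) we may take $a\neq0$. Then at a critical point of $V$ the lowest-degree homogeneous part of $V-V(p)$, in a holomorphic local coordinate $w$ at $p$, is the imaginary part of a non-zero monomial $c\,w^{\kappa}$ with $\kappa\ge2$ — it is the leading part of $\im\bigl(b(\xi-\xi(p))-a(\eta-\eta(p))\bigr)$, the other two terms being of strictly higher order in the non-degenerate situation. A non-zero $\im(c\,w^{\kappa})$ changes sign on arbitrarily small circles about $p$, so $p$ is not a local extremum. The sole exception is when $\eta-\eta(p)$ is a constant multiple of $\xi-\xi(p)$, i.e. $x$ and $y$ obey a monomial relation on the component of $C$ through $p$; then $V$ is locally constant (and the monomial constant has modulus $1$), a degenerate case we set aside.

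\emph{Points at infinity.} Suppose $\ord_p(x)=m$, $\ord_p(y)=n$ with $(m,n)\neq(0,0)$. A monomial automorphism $(x,y)\mapsto(x^{\alpha}y^{\beta},x^{\gamma}y^{\delta})$ of $(\C^*)^2$ of determinant $\pm1$ multiplies $dV$, and hence $V$, only by a sign; choosing it suitably we may assume $\ord_p(x)=e\ge1$ and $\ord_p(y)=0$. In a local coordinate $z$ with $z(p)=0$ this means $x=z^{e}u(z)$ with $u$ holomorphic non-vanishing, and $y$ holomorphic with $y(p)=y_0\in\C^*$. Expanding $\partial V/\partial\varphi$ along $|z|=r$ to leading order, its term constant in $\varphi$ is $e\log|y_0|$; since $\varphi$ is an angular variable and $V$ is single-valued this term must vanish, so $|y(p)|=1$. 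Granting this, the leading term of $\partial V/\partial\varphi$ is $C(r)\,\re(\gamma e^{ik\varphi})$ with $k=\ord_p(y-y_0)\ge1$, $\gamma\neq0$ and $C(r)\sim e\,k\,r^{k}\,|\log r|$, which dominates the drift of the circular mean $\tfrac1{2\pi}\oint_{|z|=r}V$; this mean equals $V(p)+O(r^{k+1}\log\tfrac1r)$, since the Laplacian $\Delta_0V$ in the coordinate $z$ is integrable near $p$ and has vanishing angular average on each circle. Hence $V$ really oscillates around small circles about $p$, so $p$ — at which $|x(p)|\in\{0,\infty\}$ — is not a local extremum, the exceptional case $y\equiv y_0$ being again the excluded locally constant one.

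The step I expect to be the main obstacle is the degenerate finite case, where the Hessian of $V$ at $p$ vanishes: one must check that, barring a monomial relation, the first non-vanishing homogeneous part of $V-V(p)$ is still the imaginary part of a monomial. The point is that the potential spoilers $\overline{(\xi-\xi(p))}(\eta-\eta(p))$ and $H$ have contributions that turn out to be real — hence disappear under $\im$ — precisely along the ``diagonal'' $\eta^{(j)}(p)=\tfrac{b}{a}\,\xi^{(j)}(p)$ on which the first term $b(\xi-\xi(p))-a(\eta-\eta(p))$ also degenerates; tracking the Taylor expansions one order beyond the obvious one makes this explicit. A parallel mild refinement of the Laurent expansion is what is needed to make the circular-mean estimate at a point at infinity rigorous.
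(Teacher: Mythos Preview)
Your argument is correct and follows the same overall strategy as the paper: a local expansion of $V-V(p)$ showing that, whenever $p$ is ideal or $\mu(p)\neq(0,0)$, the leading non-vanishing term is of saddle type---$\im(c\,w^{\kappa})$ at a finite point, or $\log|t|\cdot\im(a_kt^k)$ at an ideal point. The difference is organizational. The paper chooses adapted local coordinates ($x=x_0e^{t^k}$, $y=y_0e^{t^kF(t)}$ at finite points; Puiseux coordinates $x=t^p$, $y=t^qF(t)$ at ideal points) and structures the case analysis around the logarithmic Gauss map $\gamma$, which makes the successive degeneracies completely explicit. You instead work from the identity $V=-\tfrac12\im(\bar\xi\eta+G)$, giving the decomposition $V-V(p)=\im(b\xi'-a\eta')-\tfrac12\im(\bar\xi'\eta')+\im H$. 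Your key observation---that along the degeneracy locus $\eta^{(j)}(p)=(b/a)\,\xi^{(j)}(p)$ the cross-term $\bar\xi'\eta'$ becomes real and the primitive $H$ drops to strictly higher order---is exactly what the paper's coordinate choice makes automatic (there $\xi'=t^k$, so only the expansion of $F$ matters). Both arguments need the same inductive check through the degenerate orders; the paper carries this out in full (its cases~1--2 of Proposition~2.11), while you correctly identify it as the one step requiring care. The paper's route is a bit more direct computationally; your formula-based packaging is coordinate-free and may be useful in other contexts.
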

The proof is rather elementary and can be visualised with the help of two notions from real algebraic geometry: the logarithmic Gauss map and amoebas. 

Our motivation comes from topology: let $M$ be a 3-manifold with toric boundary, we denote by $X(M)$ its character variety, that is the algebraic quotient $\homo(\pi_1(M),\SL_2(\C))/\!/\SL_2(\C)$. 
The character variety of the boundary $X(\partial M)$ is the quotient of $(\C^*)^2$ by the involution $(x,y)\sim(x^{-1},y^{-1})$ and the restriction map $r:X(M)\to X(\partial M)$ has the property that $\overline{r(X(M))}$ is the vanishing set of some polynomial $A_M\in \Z[X^{\pm 1},Y^{\pm 1}]$. This polynomial - called the $A$-polynomial of $M$ - was introduced by \cite{CCGLS94} and is known to be exact. 

Indeed, given a representation $\rho:\pi_1(M)\to\SL_2(\C)$, $V(r([\rho]))$ is the volume of the representation $\rho$, for which we refer to \cite{francaviglia}. Hence our first theorem partially recovers a recent result of Francaviglia and Savini \cite{fs17}. They prove that the volume function $V:X(M)\to\R$ cannot reach its maximum at ideal points. However our simpler proof works only under the assumption that the restriction map $r:X(M)\to X(\partial M)$ is proper. This assumption holds for instance if $M$ does not contain any closed incompressible surface. These considerations were a starting point for these work but we will not go further in this direction.

Instead, we investigate in Section 3 the computation of the Mahler measure of an exact polynomial. Given a 2-variable polynomial $P\in \C[X^{\pm 1},Y^{\pm 1}]$, the problem of computing explicitly its logarithmic Mahler measure 
\[m(P)=\frac{1}{(2\pi)^2}\int_0^{2\pi}\int_0^{2\pi}\log|P(e^{i\theta},e^{i\phi})|d\theta d\phi.\]
looked intractable before the remarkable computation by Smyth of the Mahler measure of the 2-variable polynomial $X+Y-1$ \cite{Smyth}. Since then, many new examples have been found. For instance, in the article \cite{BoydRodriguezVillegas} building on \cite{BRV-I, Boyd-invariants}, the authors used K-theoretic tools to exhibit a class of 2-variables polynomials with the property that their Mahler measure multiplied by $\pi$ is a rational linear combination of evaluations of the Bloch-Wigner dilogarithm at algebraic arguments. They proceed to give a number theoretic interpretation
of this sum of dilogarithms. They give a lot of examples, among them all $A$-polynomials of $3$-manifolds $M$ with toric boundary. Finally, they observed that the Mahler measure multiplied by $\pi$ is often -- but not always -- equal to the hyperbolic volume of $M$.
Other techniques, which seem unrelated to our work, allow the computation of the Mahler measure for non-exact polynomials, e.g. \cite{Lalin, BrunaultNeururer, Maillot}. 
We refer to the survey \cite{BertinLalin} for a description of these works.

Starting from the computation of the Mahler measure of an exact polynomial (borrowed from \cite{BoydRodriguezVillegas}), it is known that the formula only involves the values of $V$ at critical points which sit inside the torus. What was not known is that the contribution of each critical point can be computed directly. Here is a simple version with strong assumptions
granting that only the local extrema appear:

\begin{theorem*}
{Let $P\in\C[X^{\pm 1},Y^{\pm 1}]$ be an irreducible exact polynomial vanishing on $C$ with volume function $V$. 
Suppose that the curve $C$ is transversal to the torus $S^1\times S^1$ in $(\C^*)^2$. Then up to normalizing $P$ conveniently one has
$$2\pi m(P)=\sum_i V(M_i)-\sum_j V(m_j)$$
where the $M_i$s and the $m_j$s are respectively the local maxima and minima of $V$.}
\end{theorem*}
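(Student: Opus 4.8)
The plan is to combine the classical Jensen-formula computation of the Mahler measure of an exact polynomial (going back to Boyd--Rodriguez-Villegas, Deninger and Maillot) with Stokes' theorem: exactness lets one rewrite a period of the $1$-form $\eta:=\log|y|\,d\arg x-\log|x|\,d\arg y=dV$ on $C$ as a signed sum of values of $V$ at the toric points, and under the transversality hypothesis each coefficient in that sum can be computed by a purely local argument.

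First, set $P(X,Y)=\sum_j a_j(X)Y^j$ with $d=\deg_Y P$ and let $Y_1(x),\dots,Y_d(x)$ be the roots of $P(x,\cdot)$. Jensen's formula gives, for $|x|=1$,
\[\frac1{2\pi}\int_0^{2\pi}\log|P(x,e^{i\phi})|\,d\phi=\log|a_d(x)|+\!\!\sum_{k:\,|Y_k(x)|>1}\!\!\log|Y_k(x)|,\]
so that, after normalizing $P$ (multiplying by a monomial and a constant so that $m(a_d)=0$, and assuming that no branch $Y_k$ escapes to $0$ or $\infty$ over the unit circle), one obtains
\[2\pi m(P)=\int_\Gamma\log|y|\,d\arg x=\int_\Gamma\eta=\int_\Gamma dV,\]
where $\Gamma=\{(x,y)\in C:\ |x|=1,\ |y|>1\}$ is oriented by pulling back the orientation of the circle $\{|x|=1\}$ through $x$; the second equality uses $\log|x|\equiv0$ on $\Gamma$ and the third uses exactness. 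Transversality of $C$ to $S^1\times S^1$ guarantees that $\Gamma$ is a piecewise smooth $1$-chain in $\hat C$ with finitely many endpoints, all of them toric points, so Stokes' theorem gives $2\pi m(P)=\sum_p\varepsilon(p)\,V(p)$, the sum over $p\in C\cap(S^1\times S^1)$ and $\varepsilon(p)\in\{+1,-1\}$ the endpoint sign of the arc of $\Gamma$ abutting $p$.

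The heart of the proof is the identification of $\varepsilon(p)$. Transversality of $C$ to $S^1\times S^1$ says precisely that $(x,y)\mapsto(\log|x|,\log|y|)$ is a local diffeomorphism of $C$ near each toric point $p$; so $u=\log|x|$ and $v=\log|y|$ are local coordinates near $p$, in which $\Gamma$ is the half-axis $\{u=0,\,v>0\}$. Expressing $\arg x,\arg y$ as functions of $(u,v)$ and integrating $dV=v\,d(\arg x)-u\,d(\arg y)$ shows $p$ is a critical point of $V$; and using that $\frac{dx}{x}\wedge\frac{dy}{y}$, being a holomorphic $2$-form, vanishes on $C$ (equivalently $d\eta=0$), one computes the Hessian of $V$ at $p$ in the coordinates $(u,v)$ to be $\bigl(\begin{smallmatrix}-c&a\\ a&b\end{smallmatrix}\bigr)$ with $a=\re g'(p)/\sigma$, $b=-1/\sigma$, $c=|g'(p)|^2/\sigma$, where $z=\log x$ is a holomorphic coordinate on $C$ near $p$, $g(z)=\log y$, and $\sigma=\im g'(p)$ — the quantity $g'(p)$ being the value of the logarithmic Gauss map at $p$, and $\sigma\neq0$ being exactly transversality. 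The crucial point is that this Hessian has determinant $1$; hence it is definite, so every toric point is a nondegenerate local maximum or minimum of $V$ — never a saddle — and it is a maximum precisely when $\sigma>0$. The sign of $\sigma$ also governs $\varepsilon(p)$: the oriented arc $\{u=0,\,v>0\}$ runs into $p$ when $\sigma>0$ and out of $p$ when $\sigma<0$, so $\varepsilon(p)=+1$ at local maxima and $\varepsilon(p)=-1$ at local minima. Together with the theorem of Section 2 (which ensures that the $M_i$ and $m_j$ exhaust the critical set of $V$), this gives $2\pi m(P)=\sum_i V(M_i)-\sum_j V(m_j)$.

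The main obstacle I anticipate is exactly this local computation: verifying that the Hessian of $V$ at each toric point has determinant $1$, so that no saddle contributions arise — such contributions would not in general carry the sign required by the formula — and here both hypotheses, transversality and the holomorphy of $C$, are genuinely used. The remaining points are more technical: making the Jensen reduction rigorous at the boundary of the unit torus (controlling branches that could run to $0$ or $\infty$, and checking that $\Gamma$ is an honest chain, for which transversality of $C$ to $S^1\times S^1$, hence to $\{|x|=1\}$ near the toric points, suffices), and fixing the normalization of $P$ so that the one-variable Mahler measure of the edge coefficient drops out — a change of the free additive constant of $V$ being then compensated by rescaling $P$.
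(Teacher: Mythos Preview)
Your approach is essentially that of the paper: reduce via Jensen to $V(\partial\overline\Gamma)$ for $\Gamma=\{|x|=1,\ |y|>1\}$, orient $\Gamma$ so that $V$ increases (equivalently by $d\arg x$, since $\log|y|>0$ there), and identify each boundary sign by a local analysis at the toric point --- your Hessian computation in the amoeba coordinates $(u,v)$ being a clean variant of the paper's direct expansion of $V$ in Newton--Puiseux coordinates (Proposition~\ref{extrema}, case~3), which is set up that way because it also handles ramification $k>1$ as needed for Theorem~\ref{formule}. Two small remarks: the $M_i,m_j$ exhaust the local \emph{extrema} of $V$, not its full critical set (saddles can and do occur off the torus), though that is all your argument needs; and the technical issues you flag about ideal endpoints and smoothness of $\Gamma$ are handled in the paper not by the transversality hypothesis but by a preliminary monomial transformation (Lemma~\ref{demidroite}).
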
 
We provide a version with weaker transversality assumptions (Theorem \ref{formule}) and a general formula (Theorem \ref{formule-generale}). {This proves that the Mahler mesure is greater than the amplitude of the volume function (see Theorems \ref{inegalite} and \ref{inegalite-generale}):}
\begin{theorem*}
{Let $P$ be a (suitably normalized) irreducible exact polynomial and $V$ a volume function. Then we have:
$$2\pi m(P)\ge \max V- \min V.$$} 
\end{theorem*}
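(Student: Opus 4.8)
The plan is to deduce the inequality $2\pi m(P)\ge \max V-\min V$ directly from the exact-formula for the Mahler measure (the simple version stated above, or its weaker-hypothesis variant Theorem \ref{formule}, or ultimately the general Theorem \ref{formule-generale}), combined with the first theorem of the paper, which locates the local extrema of $V$ at toric points. First I would invoke the formula $2\pi m(P)=\sum_i V(M_i)-\sum_j V(m_j)$, where the $M_i$ range over the local maxima and the $m_j$ over the local minima of $V$. The key observation is that among all the local maxima there is at least one, say $M_{i_0}$, at which $V$ attains its global maximum $\max V$, and likewise at least one local minimum $m_{j_0}$ realizing $\min V$; this uses that $V$ extends continuously to the compact smooth projective model $\hat C$ (stated in the introduction), so that the global extrema exist and are in particular local extrema, hence appear in the respective lists.

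The second step is to control the signs of the remaining terms. Every $V(M_i)$ is a local maximum value and every $V(m_j)$ a local minimum value; if $V$ is non-constant these interlace, but more simply one expects a pairing or counting argument showing that after removing one maximal and one minimal term, the leftover sum $\sum_{i\ne i_0}V(M_i)-\sum_{j\ne j_0}V(m_j)$ is nonnegative. Concretely, I would argue that the number of local maxima equals the number of local minima (for a continuous function on the circle-like pieces of $\hat C$, or because $dV$ is a closed $1$-form whose zeros pair up), and then match each remaining maximum $M_i$ with a remaining minimum $m_j$ with $V(M_i)\ge \max V \ge \cdots$ — wait, that's the wrong direction; rather, one matches so that $V(M_i)\ge V(m_j)$ term by term, which holds because every local maximum value is $\ge \min V$ and ... the cleanest route is: $\sum_{i\ne i_0} V(M_i) \ge (\#\{i\ne i_0\})\cdot \min V$ and $\sum_{j\ne j_0} V(m_j)\le (\#\{j\ne j_0\})\cdot \max V$ is too weak, so instead use the interlacing structure along each connected component of $\hat C$ to pair consecutive extrema.

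Therefore the honest approach is component-by-component. On each connected component of the normalization $\hat C$ (a compact Riemann surface), the restriction of $V$ is a continuous function whose critical points alternate between maxima and minima along any loop; summing the formula and grouping, on the component carrying the global max and the component carrying the global min one extracts the dominant terms $+\max V$ and $-\min V$, while on every component the alternating sum of its own maxima and minima, telescoped, is bounded below by the difference between its largest max and its smallest min, which is $\ge 0$, and in particular $\ge 0$ for the components not involved in the global extrema. Summing these nonnegative contributions yields $2\pi m(P)\ge \max V-\min V$.

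The main obstacle I anticipate is bookkeeping the critical points correctly when $C$ is not transversal to $S^1\times S^1$, i.e. making the argument work from the general formula (Theorem \ref{formule-generale}) rather than the clean transversal version: there the "contributions" of critical points may come with multiplicities or with the subtler local invariants computed in Section 3, and one must check that these still split as a single global-max term, a single global-min term, and a remainder of the correct sign. I would handle this by first proving the inequality under the transversality hypothesis using the simple telescoping argument above, then remarking that the general formula degenerates to a sum of the same shape (each extremal toric point contributing its value with the appropriate sign and nonnegative multiplicity), so the same grouping applies verbatim; the normalization of $P$ alluded to in the statement is exactly what is needed to kill the constant ambiguity in $V$ and in $m(P)$ and should be fixed once at the start.
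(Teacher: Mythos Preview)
Your approach has a genuine gap. You attempt to deduce the inequality purely algebraically from the formula
\[
2\pi m(P)=\sum_i k_i V(M_i)-\sum_j l_j V(m_j),
\]
by extracting the global maximum and minimum and arguing that the remainder is nonnegative via some ``interlacing'' or ``telescoping'' of extrema. But $\hat C$ is a compact Riemann surface, not a circle: there is no linear or cyclic ordering of the critical points along which maxima and minima alternate, and your parenthetical justifications (``circle-like pieces of $\hat C$'', ``zeros of $dV$ pair up'') do not hold for functions on surfaces. Since $P$ is irreducible, $\hat C$ is connected, so the component-by-component reduction is also vacuous.

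More importantly, even if one imports from the paper the arc structure of $\overline{\Gamma}$ (a disjoint union of arcs along each of which $V$ increases from a local minimum to a local maximum), the inequality still does \emph{not} follow from the formula alone. A toy configuration shows why: two maxima with $V$-values $10$ and $3$, two minima with $V$-values $7$ and $0$, and two monotone arcs $0\to 3$ and $7\to 10$. This is perfectly consistent with the arc/matching constraints, yet gives $\sum V(M_i)-\sum V(m_j)=6$ while $\max V-\min V=10$. Nothing in your argument rules this out.

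The paper's proof supplies exactly the missing geometric input. It observes that $2\pi m(P)\ge \lambda(I_\alpha)$ where $I_\alpha=V(\overline{\Gamma}_\alpha)\subset[\min V,\max V]$, so if the inequality failed there would be a ``hole'' $x\in(\min V,\max V)\setminus I_\alpha$. A counting argument then shows this same $x$ is missing from $I_\beta$ for \emph{every} admissible half-line direction $\beta$; since $\bigcup_\beta \overline{\Gamma}_\beta$ is dense in $\hat C$ and the complement of $I_\beta$ contains a fixed open interval around $x$, one concludes that $V(\hat C)$ itself misses that interval, contradicting the connectedness of $\hat C$. It is this variation over all half-lines, together with irreducibility, that closes the gap; your proposal does not use either ingredient.
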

This inequality is particularly nice in the context of hyperbolic manifolds as the maximum of our volume function on $X(M)$ is the hyperbolic volume of the 3-manifold $M$ denoted by vol$(M)$. 
When $A_M$ is irreducible over $\C$, we get the inequality 
$$ \pi m(A_M)\ge {\rm vol}(M).$$
This sheds some light on the cases were equality were observed; manifolds with increasing complexity satisfy a strict inequality and we will give an example of this phenomenon. 

Although it is easy to understand which polynomials are exact in genus 0, the problem looks intractable for higher genus cases. For example, the question found in the fourth final remark in \cite[Section 8]{BoydRodriguezVillegas} reads:
no continuous family of exact polynomials exists. In Section 4, we obtain for genus 1 polynomials the following finiteness result which we expect to be true without assumptions on the genus. 
 \begin{theorem*}
Up to monomial transformations, there is a finite number of exact polynomials $P\in \overline{\Q}[X^{\pm 1},Y^{\pm 1}]$ of genus $g\le 1$ with Newton polygon of bounded area. 
\end{theorem*}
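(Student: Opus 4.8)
The plan is to stratify the set of exact polynomials according to combinatorial data extracted from the Newton polygon and then to show that each stratum contains only finitely many exact polynomials up to monomial transformation. \emph{First} I would note that an upper bound on the area of the Newton polygon $\Delta$ of $P$ leaves only finitely many possibilities for $\Delta$ up to the action of $\mathrm{GL}_2(\Z)\ltimes\Z^2$ (a lattice polygon of area $\le A$ has lattice width bounded in terms of $A$, and all such are affinely equivalent to polygons inside a fixed box). Moreover $\Delta$ controls the degrees of $x=X|_{\hat C}$ and $y=Y|_{\hat C}$ as maps $\hat C\to\P^1$, hence bounds the number and orders of the zeros and poles of $x$ and $y$; so for each $\Delta$ there are finitely many ``divisor types'' $(D_x,D_y)$. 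Fix one. The exact polynomials realizing it are, up to monomial transformations and rescaling of $X,Y$ by roots of unity, parametrized by a point of a moduli space $\mathcal M$: the isomorphism class of $\hat C$ (a single point if $g=0$; a point of the $j$-line if $g=1$), the supports of $D_x$ and $D_y$ subject to the linear equivalences $D_x\sim 0\sim D_y$ in $\mathrm{Pic}(\hat C)$, and the leading scalars of $x$ and $y$. This $\mathcal M$ is a quasi-projective variety over $\overline{\Q}$ of dimension bounded in terms of $A$.

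\emph{Next} I would cut out the exactness locus inside $\mathcal M$. By the analysis of the class of $\log|y|\,d\arg x-\log|x|\,d\arg y$ in $H^1$ of the punctured curve, equivalently by the $K$-theoretic criterion, $P$ is exact precisely when every tame symbol $\tau_p\{x,y\}$ ($p$ a zero or pole of $x$ or $y$) is a root of unity and, when $g=1$, the two periods of this $1$-form over a basis of $H_1(\hat C,\Z)$ vanish. The tame symbols are regular functions on $\mathcal M$: for $g=0$, after eliminating the leading scalars via Weil reciprocity, they are Laurent monomials in the differences $a_i-a_j$ of the support points; for $g=1$ they are ratios of theta-type functions in the support points and the modulus. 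Let $\mathcal E\subseteq\mathcal M$ be the locus where all these conditions hold; the theorem reduces to finiteness of $\mathcal E(\overline{\Q})$.

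\emph{Finally}, the core of the argument. Any positive-dimensional complex-analytic subvariety $\Sigma\subseteq\mathcal E$ would carry holomorphic functions $\tau_p|_\Sigma$ of constant modulus $1$, hence constant by the maximum principle, and likewise constant periods; a direct computation (using the logarithmic Gauss map for the tame-symbol part and the variation of periods for the elliptic part) shows that this forces the configuration of support points and the modulus of $\hat C$ to be locally constant, so $\Sigma$ is a point. Thus $\mathcal E$ carries no complex-analytic curves, which already addresses the question of Boyd and Rodriguez-Villegas on continuous families. To pass from this to finiteness of $\mathcal E(\overline{\Q})$ one appeals to Diophantine geometry. For $g=0$, normalizing three support points and eliminating the scalars turns the conditions $\tau_p\in\mu_\infty$ into multiplicative relations between the linear forms $a_i-a_j$ set equal to roots of unity; since these forms also satisfy the tautological additive relations, this is an intersection of a torsion coset of a subtorus of $\mathbb{G}_m^N$ with an affine-linear subvariety, whose algebraic points are finite in number by the $S$-unit theorem in the simplest cases and by Bombieri--Masser--Zannier type results on unlikely intersections in general. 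For $g=1$ the support points are sections of a non-isotrivial elliptic scheme constrained to a special locus (the relations $D_x\sim 0\sim D_y$ together with the torsion conditions), and the required finiteness is a relative Manin--Mumford statement of Masser--Zannier type. Summing over the finitely many combinatorial types gives the theorem.

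The step I expect to be the main obstacle is this last one: the rigidity argument through the maximum principle is clean and uniform in the genus, but converting ``no continuous family'' into ``finitely many polynomials over $\overline{\Q}$'' genuinely needs unlikely-intersection input, and it is the availability of such results only for tori and for families of elliptic curves that forces the restriction $g\le 1$; lifting it would presumably require the full Zilber--Pink conjecture.
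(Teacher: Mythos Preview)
Your approach differs substantially from the paper's, and there is a genuine gap in your characterization of exactness.

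The paper's argument (Section~4) is elementary and uses no Diophantine geometry. After fixing a Newton polygon with one interior point and fixing the boundary coefficients, the tempered polynomials form a one-parameter family indexed by the interior coefficient $c\in\C$. Two steps then give finiteness. First, the torus-intersection theorem proved earlier in the paper (any irreducible exact polynomial with nonconstant volume function must vanish somewhere on $|x|=|y|=1$) forces $c$ to lie in a \emph{compact} set. Second, a direct computation using the Hamiltonian vector field of $P$ relative to $\omega=\frac{dx}{x}\wedge\frac{dy}{y}$ shows that the derivative of the period map $c\mapsto[\eta_c]\in H^1(\widehat C_c,\R)$ is nowhere zero at regular values, so the exact locus is discrete. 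Compact plus discrete gives finite, already over $\C$; the restriction to $\overline{\Q}$ is never invoked, and the genus-$0$ case is declared trivial since there is nothing to vary once the boundary is fixed.

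The error in your proposal is the sentence ``$P$ is exact precisely when every tame symbol $\tau_p\{x,y\}$ is a root of unity''. That is the $K$-theoretic criterion for being an $A$-factor, not for exactness: the paper's analysis of $\eta$ near ideal points shows exactness only requires $|\{x,y\}_p|=1$ (temperedness), together with vanishing of the real periods when $g\ge 1$. Since algebraic numbers of modulus one need not be roots of unity, your locus $\mathcal E$ is cut out by the \emph{real} equations $|\tau_p|=1$, not by torsion conditions, and neither the $S$-unit/Bombieri--Masser--Zannier step in genus $0$ nor the Masser--Zannier relative Manin--Mumford step in genus $1$ applies to it. (Indeed, for a fixed genus-$0$ polygon the exact polynomials form a positive-dimensional real family---for instance $X+Y-c$ with $|c|=1$---so any finiteness statement must be read with the boundary data fixed, as the paper does.) The paper sidesteps all of this: compactness is a free consequence of the torus-intersection theorem, and discreteness comes from a single derivative computation, so the unlikely-intersection machinery you propose is unnecessary.
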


The last part of the article contains some $K$-theoretic arguments. We prove the following theorem. 
\begin{theorem*}Let $P\in \overline{\Q}[X^{\pm 1},Y^{\pm 1}]$ be a polynomial. It satisfies the following condition 
$$\{X,Y\}=0\in K_2(K_P)\otimes \Q\textrm{ where }K_P=\fraction\left( \overline{\Q}[X^{\pm 1},Y^{\pm 1}]/P\right)$$
if and only if $P$ is a factor of the $A$-polynomial of some 3-manifold with boundary. 
\end{theorem*}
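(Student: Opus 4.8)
The plan is to establish the two implications separately. We may assume $P$ is squarefree; if $P=\prod_i P_i$ with the $P_i$ irreducible and $C_i=\{P_i=0\}$, then $K_P$ is the product of the fields $\overline{\Q}(C_i)$ and $\{X,Y\}=0$ in $K_2(K_P)\otimes\Q$ exactly when $\{X,Y\}=0$ in each $K_2(\overline{\Q}(C_i))\otimes\Q$. For the implication $P\mid A_M\Rightarrow\{X,Y\}=0$ this reduces us at once to a single irreducible factor (one manifold $M$ serves all of them); for the reverse implication the separate factors will have to be realized simultaneously inside the gluing variety of one $3$-manifold, which the construction below should permit. So we fix an irreducible $P$, its curve $C\subset(\C^*)^2$, its function field $K_P=\overline{\Q}(C)$, and the coordinate functions $X,Y\in K_P^*$.

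\emph{The implication $P\mid A_M\Rightarrow\{X,Y\}=0$.} Since $C$ is an irreducible component of $\{A_M=0\}=\overline{r(X(M))}$, there is a curve $C'$ finite over $C$ carrying a representation $\pi_1(M)\to\SL_2(\overline{\Q}(C'))$ whose character lies in $X(M)$ and for which the eigenvalues of the meridian and longitude pull $X$ and $Y$ back to $C'$. The symbol $\{X,Y\}$ is torsion in $K_2(\overline{\Q}(C'))$: this is the $K$-theoretic shadow of the fact that $M$ fills its boundary — equivalently the temperedness of $A_M$ together with the vanishing of its regulator, which is precisely what makes the dilogarithm formula of \cite{BoydRodriguezVillegas} valid; alternatively it is read off directly from the Neumann--Zagier relations on the gluing variety of an ideal triangulation of $M$, where, up to torsion, $\{X,Y\}$ is a sum of Steinberg symbols in the tetrahedron shapes. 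Finally, the composite of restriction $K_2(K_P)\to K_2(\overline{\Q}(C'))$ with the transfer $N_{\overline{\Q}(C')/K_P}$ is multiplication by $[\overline{\Q}(C'):K_P]$; applied to $\{X,Y\}$, whose restriction dies in $K_2\otimes\Q$, it gives $\{X,Y\}=0$ in $K_2(K_P)\otimes\Q$.

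\emph{The implication $\{X,Y\}=0\Rightarrow P\mid A_M$ — the new content.} By Matsumoto's presentation of $K_2$ of a field, the hypothesis means that for some integer $N\ge1$,
$$ N\,(X\wedge Y)\;=\;\sum_{j} n_j\, f_j\wedge(1-f_j)\qquad\text{in }\Lambda^2 K_P^*, $$
with $n_j\in\Z$ and $f_j\in K_P\setminus\{0,1\}$. One reads the right-hand side as a list of ideal tetrahedra — the $j$-th having shape $f_j$, with multiplicity $|n_j|$ and orientation $\mathrm{sign}(n_j)$ — and the identity as the compatibility making the associated gluing and cusp equations consistent: this is the converse of the Neumann--Zagier ``key lemma'', which for an ideal triangulation of a one-cusped manifold $M$ writes the wedge $\mathfrak m\wedge\mathfrak l$ of the meridian and longitude holonomies on the gluing variety as $\sum_j\pm z_j\wedge(1-z_j)$ in the tetrahedron shapes $z_j$. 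Concretely, one constructs an orientable triangulated $3$-manifold $M$ with a single torus cusp, built out of exactly these tetrahedra, so that on Thurston's gluing variety $G$ the product of shapes around every edge is identically $1$, the shape functions are the $f_j$, $G$ dominates $C$, and the meridian and longitude holonomies restrict to $X$ and $Y$ — after passing, if necessary, to a cyclic cover of $M$ along the cusp to absorb the exponent $N$, and after choosing an $\SL_2(\C)$-lift. Granting such an $M$, Thurston's construction and \cite{CCGLS94} send points of $G$ to $\SL_2(\C)$-characters of $\pi_1(M)$ whose restriction to $\partial M$ sweeps out $C$, so $C\subseteq\overline{r(X(M))}$ and $P\mid A_M$.

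The crux — and the step I expect to be the main obstacle — is the realization just invoked: turning the algebraic identity into an \emph{honest} triangulated orientable $3$-manifold with exactly one torus boundary component whose combinatorics reproduce it, with the edges closing up with the correct product-of-shapes relations and the gluing variety genuinely dominating $C$ with $X,Y$ as cusp holonomies. The flexibility available is to enlarge the triangulation by auxiliary tetrahedra whose shapes cancel in $\Lambda^2 K_P^*$ and to choose the face pairings freely, and the natural scheme is to build the terms $f_j\wedge(1-f_j)$ in one at a time, each step being a $2$--$3$ Pachner move, i.e.\ a five-term dilogarithm relation; the work is to organize this so that the underlying space is a manifold (not merely a pseudo-manifold) with a single torus cusp, to realize all factors of a reducible $P$ at once, and to settle the $\SL_2$-versus-$\mathrm{PSL}_2$ lift and the factor $N$.
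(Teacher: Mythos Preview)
Your forward implication is essentially the paper's: pass to a finite extension $E/K_P$ carrying the tautological representation, invoke \cite{CCGLS94} to get $2\{X,Y\}=0$ in $K_2(E)$, and descend via the transfer map.

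The reverse implication is where you diverge, and where your proposal has the gap you yourself flag. You attempt to realize the Matsumoto identity $N\,(X\wedge Y)=\sum_j n_j\,f_j\wedge(1-f_j)$ as the Neumann--Zagier data of an explicit ideal triangulation and then to assemble from these tetrahedra an honest orientable 3-manifold with one torus cusp whose gluing variety dominates $C$. As you acknowledge, this ``realization'' step is the crux, and it is not a step you have carried out: there is no general mechanism ensuring that an arbitrary list of shape parameters with the right wedge identity can be organized into face pairings giving a manifold (rather than a pseudo-manifold), with a single torus end, and with the cusp holonomies equal to $X,Y$. The Pachner-move heuristic does not obviously close this gap.

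The paper bypasses the construction entirely with a homological/cobordism argument (credited to Ghys). First use Bass--Tate divisibility of $K_2(\overline{K_P})$ to find a finite extension $E/K_P$ in which $2\{X,Y\}=0$ holds \emph{exactly} in $K_2(E)$. The diagonal representation $\rho:\pi_1(S^1\times S^1)\to\SL_2(E)$ sending the generators to $\mathrm{diag}(X,X^{-1})$ and $\mathrm{diag}(Y,Y^{-1})$ then satisfies $\rho_*[S^1\times S^1]=2\{X,Y\}=0$ in $H_2(\SL(E))=K_2(E)$; Sah stability brings this down to $H_2(\SL_2(E))$. Now apply oriented bordism to $B\SL_2(E)$: since the map $S^1\times S^1\to B\SL_2(E)$ is null-homologous, it extends over some compact oriented 3-manifold $M$ with $\partial M=S^1\times S^1$, i.e.\ there is $\tilde\rho:\pi_1(M)\to\SL_2(E)$ restricting to $\rho$. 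By construction $P$ divides $A_M$. No triangulation is ever built; the existence of $M$ is soft topology, and the difficulty you isolated simply does not arise.
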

The $K$-theoretic condition above is the same as in \cite{BoydRodriguezVillegas}. The proof given in Subsection \ref{ss:Ktheory} borrows arguments from Ghys \cite{ghys}. Hence, at least theoretically, one can recognise which polynomials are $A$-polynomials of 3-manifolds although the criterium is effective only for polynomials defining a curve of genus 0. We then describe the computation of Mahler measures of a few A-polynomials, recovering in part known results.

We end the section and the article by proving a formula which gives a topological interpretation 
of $m(A_M)$ where $A_M$ is the $A$-polynomial of $M$. 
Given a closed manifold $M$ with a knot $K$ inside, we set 
$$m(M,K)=\sum_{[\rho]\in X(M)}\log ||\rho(K)||$$
where $||A||$ is the spectral radius of $A$. If $\rho$ is the hyperbolic representation, 
$\log ||\rho(K)||$ is the length of the geodesic represented by $K$. Hence $m(M,K)$ is 
the sum of all ``lengths" of $K$ over all possible (non necessarily geometric) representations. 
Our result is the following: 

\begin{theorem*}Let $M$ be a manifold with toric boundary satisfying the hypothesis of Proposition \ref{topinterpretation}. We have 
$$\lim_{p^2+q^2\to \infty} m(M_{p/q},K_{p/q})=m(A_M)$$
where $M_{p/q}$ denotes the Dehn filling of $M$ with parameters $p/q$ and $K_{p/q}$ is the core of the surgery. 
\end{theorem*}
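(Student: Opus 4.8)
The plan is to reinterpret both sides as integrals and to recognize the left-hand side as a Riemann-type sum converging to the Mahler measure integral on the right. First I would recall the relation between $m(A_M)$ and the volume function $V$ established earlier in the paper (Theorem~\ref{formule-generale}): $m(A_M)$ is, up to the factor $2\pi$, an alternating sum of extremal values of $V$ on $\hat C$, equivalently (via the Boyd--Rodriguez-Villegas computation borrowed in Section~3) an integral of $\log|Y|$ over the part of the curve $C=\{A_M=0\}$ lying above the unit circle $|X|=1$, with the appropriate orientation and multiplicity. The key algebraic input is that the eigenvalues of $\rho(\mu)$ and $\rho(\lambda)$ (meridian and longitude of $\partial M$) are exactly the coordinates $(x,y)$ on the $A$-polynomial curve, so the surgery relation $\mu^p\lambda^q=1$ in $\pi_1(M_{p/q})$ forces $x^p y^q=1$ on the characters that survive the Dehn filling, while the core curve $K_{p/q}$ is represented by $\mu^r\lambda^s$ for any $(r,s)$ with $ps-qr=1$, so $\log\|\rho(K_{p/q})\|=\bigl|\log|x^r y^s|\bigr|$ evaluated at those characters.

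The main steps, in order, would be: \emph{(i)} show that $X(M_{p/q})$ is, up to finitely many exceptional components and for $p^2+q^2$ large, in bijection (counted with multiplicity) with the solutions on $\hat C$ of $x^p y^q=1$ — this uses properness of the restriction map $r:X(M)\to X(\partial M)$ (the standing hypothesis of Proposition~\ref{topinterpretation}) together with the fact that the ``other'' components of $X(M_{p/q})$ coming from reducible or non-boundary-parabolic representations contribute a bounded amount; \emph{(ii)} on $\hat C$ write $x=e^{2\pi i(q t + \epsilon)}$, $y=e^{-2\pi i(p t+\epsilon')}$ in a neighborhood of $|x|=|y|=1$ so that the equation $x^p y^q=1$ selects a lattice of $\sim (p^2+q^2)^{1/2}\cdot(\text{length})$ equally spaced points along each real arc of $C\cap(S^1\times S^1)$ and, on the non-toric part of $\hat C$, only finitely many solutions whose contribution $\log\|\rho(K)\|$ is $O(1)$ each but whose number grows — here one must be careful that the total non-toric contribution is still $o(1)$ relative to... no: one must instead check it converges to the correct non-toric part of the integral; \emph{(iii)} recognize $\sum \bigl|\log|x^r y^s|\bigr|$ over these points as a Riemann sum for $\frac{1}{2\pi i}\int_{C}\log^+|Y|\,\frac{dX}{X}$-type expression that the Section~3 formula identifies with $2\pi m(A_M)$, the normalization $ps-qr=1$ guaranteeing the linear form $r\log|x|+s\log|y|$ restricts to arclength-compatible coordinate on the fibers of $x^p y^q$.

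I expect the main obstacle to be step~\emph{(i)}: controlling the character variety of the Dehn-filled manifold uniformly in $(p,q)$. One needs that for all but finitely many slopes the components of $X(M_{p/q})$ other than those dominating the $A$-polynomial curve (e.g. abelian characters, or characters killed by the surgery relation in a degenerate way) contribute a total of $o\bigl((p^2+q^2)^{1/2}\bigr)$ to $m(M_{p/q},K_{p/q})$, while the main components contribute the growing Riemann sum; this is essentially a statement that Dehn filling is ``generically transverse'' to $C$ inside $X(\partial M)$, which should follow from the curve $C$ meeting the one-parameter families $\{x^p y^q=1\}$ transversally for large $p,q$ (a Bézout/degree argument on $\hat C$), but making the exceptional set genuinely finite and independent of the geometry requires the properness hypothesis in an essential way. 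A secondary technical point is the behavior near the finitely many points where $C$ meets $S^1\times S^1$ non-transversally or meets the boundary divisor of $\hat C$: there the spacing of surgery solutions degenerates and one must show these clusters contribute only the expected boundary terms of the integral, which is exactly where the continuity of $V$ on $\hat C$ (proved in Section~2) and the extremum theorem get used to bound $\log\|\rho(K_{p/q})\|$.
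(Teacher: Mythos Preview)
Your setup correctly identifies the eigenvalue of $\rho(K_{p/q})$ as $x^r y^s$ with $ps-qr=1$, but from there you take a substantially harder road than necessary, and several of your proposed steps are misdirected.

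The paper's argument is much shorter and bypasses Sections~2--3 entirely. Parametrize the solutions of $x^py^q=1$ by $x=t^{-q}$, $y=t^{p}$; then $x^r y^s=t^{ps-qr}=t$, so the eigenvalue of the core is simply $t$. After accounting for the identification $(x,y)\leftrightarrow(x^{-1},y^{-1})$, the sum defining $m(M_{p/q},K_{p/q})$ is exactly
\[
\tfrac{1}{2}\sum_{t\ne 0,\ A(t^{-q},t^{p})=0}\bigl|\log|t|\bigr|\;=\;m\bigl(A(t^{-q},t^{p})\bigr),
\]
the ordinary one-variable Mahler measure, by Jensen's formula. The limit $m\bigl(A(t^{-q},t^{p})\bigr)\to m(A)$ as $p^2+q^2\to\infty$ is then the classical convergence theorem for Mahler measures under monomial substitutions (the integral over $S^1$ becomes the integral over $S^1\times S^1$ because the curves $\theta\mapsto(e^{-iq\theta},e^{ip\theta})$ equidistribute on the torus). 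No volume function, no extremum theorem, no analysis of $\hat C$ near the torus.

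Your plan has two concrete problems beyond the extra length. First, the standing hypothesis of Proposition~\ref{topinterpretation} is that $r:X(M)\to X(\partial M)$ is \emph{birational onto its image} (together with a condition that singular points avoid torsion), not that it is proper; birationality is precisely what matches characters of $M_{p/q}$ with solutions of $A(x,y)=0$, $x^py^q=1$ one-to-one generically, so the uniform-control worries in your step~(i) largely evaporate once the right hypothesis is used. Second, your step~(ii) organizes the analysis around the equally spaced solutions lying on the arcs $C\cap(S^1\times S^1)$, but every such solution has $|x^r y^s|=1$ and hence contributes \emph{zero} to $m(M_{p/q},K_{p/q})$; the entire sum comes from roots with $|t|\neq 1$, i.e.\ points of $C$ off the torus. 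Consequently the Riemann-sum picture you want does not live on $\Gamma\subset C$ at all: the equidistribution that makes the limit work takes place on the $2$-torus $S^1\times S^1$ (the domain of integration for $m(A)$), not on the curve, and that is exactly what the one-variable reduction above packages cleanly.
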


{\bf Acknowledgements:} We would like to thank N. Bergeron, G. Calsamiglia, G. Courtois, M. Culler, B. Deroin, E. Falbel, E. Ghys, J. Josi for valuable discussions around this article. 

\section{Exact polynomials, volume function}

Consider a Laurent polynomial $P\in\C[X^{\pm 1},Y^{\pm 1}]$ with two variables 
and let $C$ be the set of its complex smooth points, that is:
\[ C=\{(x,y)\in\C^*,\, P(x,y)=0, dP(x,y)\ne 0\}\]

We define the 2-form $\eta$ on $(\C^*)^2$ by the formula 
\[ \eta= \log|y| d \arg(x)-\log|x| d\arg(y)\]
This form restricted to $C$ is closed as one has 
$d\eta=-\im \left(\frac{dx}{x}\wedge\frac{dy}{y}\right)$.
Note that, comparing with \cite{BoydRodriguezVillegas}, their $\eta$ is
minus ours. This different normalization is mainly due to simplifications of
notations. 

\begin{definition}
We will say that $P\in\C[X^{\pm 1},Y^{\pm 1}]$ is {\it exact} if 
the form $\eta$ restricted to $C$ is exact. 
\end{definition}

\begin{definition}
A {\it volume function} associated to an exact polynomial $P\in \C[X^{\pm 1},Y^{\pm 1}]$ is any 
function $V:C\to\R$ satisfying $dV=\eta|_C$. 
\end{definition}

\begin{remark}[Case of real polynomials]\label{vol:real}

Suppose that $P$ is real and irreducible over $\C$. We remark that in this case there is a preferred choice
for a volume function.

We define the complex conjugation on $(\C^*)^2$ by $\sigma(x,y)=(\overline{x},\overline{y})$. As the coefficients of $P$ are real, this conjugation preserves the curve $C$ and satisfies $\sigma^*\eta=-\eta$. Given a volume function $V_0:C\to \R$ such that $dV_0=\eta|_C$, the function $V=\frac{1}{2}(V_0-V_0\circ \sigma)$ is the unique volume function that is odd with respect to conjugation: $V \circ \sigma = -V$. 
\end{remark}

\begin{example}
We give some examples of exact polynomials:
\begin{enumerate}
  \item Any $A$-polynomial of a knot is exact. This can be proved by defining 
  directly the volume function which comes from volumes of representations 
  of cusped 3-manifolds. 
  There is an alternative proof using $K$-theoretic tools. 
  In any cases, we refer to \cite{CCGLS94}.
  \item $P(X,Y) = X+Y-1$. The volume function (in the sense of the previous remark) is $V(x,y) = -D(x)$ where $D$ is the Bloch-Wigner dilogarithm.
  \item If $\phi_5$ is the fifth cyclotomic polynomial, $P(X,Y) = Y-\phi_5(X)$ is exact. The volume function is 
  $V(x,y) = D(x) -\frac{1}{5} D(x^5)$.
  \item $P(X,Y) = 1 + X + Y + XY + X^2 + Y^2$.
  \item $P(X,Y) = 1 + iX + iY + XY$. 
\end{enumerate}
In fact, we will show later on (see section \ref{ss:Ktheory}) that each of these polynomials is a factor of the $A$-polynomial of some $3$-manifold; which $3$-manifold is unknown, apart from the last case (see \cite{Dun99} and Section \ref{ss:examples}).
\end{example}

Any volume function is clearly analytical on $C$ and extends to the completion of $C$ thanks to the following proposition.
\begin{proposition}
Let $P\in\C[X^{\pm 1},Y^{\pm 1}]$ be an exact polynomial and $V:C\to \R$ be a volume function. Then $V$ extends continuously to any projective model $\widehat{C}$ of $C$. 
\end{proposition}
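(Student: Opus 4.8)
The plan is to work locally near a single point of the completion. Since $V$ is real-analytic on $C$, it suffices to fix $p\in\widehat C$ not corresponding to a smooth point of $C$ and prove that $V$ extends continuously across $p$. Choose a holomorphic coordinate $t$ on $\widehat C$ vanishing at $p$, defined on a disk $D=\{|t|<\varepsilon\}$. The functions $x,y$, pulled back to $\widehat C$, are meromorphic in $t$, so after shrinking $\varepsilon$ we may write $x=t^{a}u$ and $y=t^{b}w$ with $u,w$ holomorphic and nowhere vanishing on $D$ and $a=\ord_p x$, $b=\ord_p y$ in $\Z$. If $a=b=0$ the form $\eta$ pulls back to a smooth closed $1$-form on the simply connected disk $D$, hence to an exact one, so $V$ differs from a smooth primitive by a constant and extends smoothly; thus I may assume $(a,b)\ne(0,0)$, in which case $p$ lies over a boundary point of $(\C^*)^2$ and, shrinking $\varepsilon$ if necessary, $D^*:=D\setminus\{0\}\subset C$.

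Writing $t=re^{i\theta}$ and substituting $\log|x|=a\log r+\log|u|$, $\arg x=a\theta+\arg u$ (and similarly for $y$) into $\eta=\log|y|\,d\arg x-\log|x|\,d\arg y$, the $ab\log r\,d\theta$ terms cancel and one obtains on $D^*$
$$\eta=(\log r)\,dG+H\,d\theta+\bigl(\log|w|\,d\arg u-\log|u|\,d\arg w\bigr),\qquad G:=\arg(u^{b}w^{-a}),\ \ H:=a\log|w|-b\log|u|.$$
Because $u$ and $w$ are holomorphic and nonvanishing on the simply connected disk $D$, the functions $\log|u|,\log|w|,\arg u,\arg w$ — and hence $G$ and $H$ — are smooth on all of $D$; in particular the last summand above is a smooth $1$-form on $D$. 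In polar coordinates $\partial_\theta$ carries a factor $r$, so $\partial_\theta G$, $\partial_\theta\arg u$, $\partial_\theta\arg w$ are $O(r)$ near $0$, whereas $\partial_r G$, $\partial_r\arg u$, $\partial_r\arg w$, $\log|u|$, $\log|w|$ stay bounded there.

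The crucial point is that exactness forces $H(p)=0$. Indeed, since $V$ is single-valued on $C$ we have $\oint_{|t|=r}\eta=\oint_{|t|=r}dV=0$ for every small $r$; integrating the displayed expression for $\eta$ around $\{|t|=r\}$ and letting $r\to0$, the term $(\log r)\,dG$ contributes $(\log r)\oint dG=0$ since $G$ is single-valued, the smooth $1$-form contributes $o(1)$ since its $d\theta$-component is $O(r)$, and $H\,d\theta$ contributes $\int_0^{2\pi}H(re^{i\theta})\,d\theta\to 2\pi H(p)$. Hence $H(p)=0$, so $H=O(r)$, and therefore the $d\theta$-component of $\eta$, namely $(\log r)\,\partial_\theta G+H+\log|w|\,\partial_\theta\arg u-\log|u|\,\partial_\theta\arg w$, is $O(r|\log r|)$ uniformly in $\theta$, while its $dr$-component $(\log r)\,\partial_r G+\log|w|\,\partial_r\arg u-\log|u|\,\partial_r\arg w$ is $O(|\log r|)$ and hence absolutely integrable on $(0,\varepsilon)$.

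Finally I assemble these bounds. Fix $t_0=r_0e^{i\theta_0}\in D^*$ and integrate $dV=\eta$ along the radial segment from $t_0$ to $re^{i\theta_0}$: absolute integrability of the $dr$-component shows that $V(re^{i\theta_0})$ has a finite limit $\ell$ as $r\to0$. Integrating $dV=\eta$ along the arc $\{|t|=r\}$ from $\theta_0$ to $\theta$ and using the uniform bound $O(r|\log r|)$ on the $d\theta$-component gives $|V(re^{i\theta})-V(re^{i\theta_0})|\le 2\pi\cdot O(r|\log r|)\to0$, uniformly in $\theta$. Hence $V(re^{i\theta})\to\ell$ uniformly in $\theta$, so setting $V(p):=\ell$ extends $V$ continuously across $p$; as $p$ is arbitrary, $V$ extends continuously to $\widehat C$. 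The only delicate step is the vanishing $H(p)=0$, which is exactly where exactness is used; the rest is the local integrability of $\log r$ together with the factor $r$ picked up by angular derivatives.
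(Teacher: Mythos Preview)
Your proof is correct and follows essentially the same route as the paper's. Both arguments write $\eta$ in a local uniformizer, use exactness (vanishing of the period around small circles) to force the logarithmically divergent part to vanish at the center, and then conclude integrability and hence continuous extension of $V$. The only real difference is cosmetic: the paper uses a Puiseux normalization $x=t^p$, $y=t^qF(t)$ with $p,q$ coprime, which makes one of your functions trivial ($u\equiv 1$) and collapses your condition $H(p)=a\log|w(0)|-b\log|u(0)|=0$ to their $\log|F(0)|=0$; you instead keep the general form $x=t^au$, $y=t^bw$ and compensate with more explicit $O(r|\log r|)$ and $O(|\log r|)$ estimates on the angular and radial components, followed by a clean radial-then-angular integration. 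Your version is slightly longer but more self-contained, since it spells out the integrability step that the paper leaves to the reader.
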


\begin{proof}
Let $z$ be a point of $\widehat{C}\setminus C$. There exists a local coordinate $t$ around $z$ such that $x=t^p$ and $y=t^qF(t)$ where $p$ and $q$ are coprime integers and $F$ is a convergent series with $F(0)\ne 0$. We compute that in the coordinate $t=\rho e^{i\theta}$ one has:

\[\eta=p\log|F(\rho e^{i\theta})|d\theta-p \log(\rho)d \arg F(\rho e^{i\theta})\]
By integrating this form over a circle of radius $\rho$ and letting $\rho$ go to $0$, we find that the exactness of $\eta$ implies $\log|F(0)|=0$. This proves that we can factorise $\log(\rho)$ from the right hand side and conclude that $\eta$ is integrable at $0$, showing that its integral, $V$, extends continuously at $0$. 
\end{proof}

The previous proposition also gives the condition for $\eta$ to be exact in the neighbourhood of an ideal point. To give the formal interpretation, we recall the notion of tame symbol. Let $f,g$ be two meromorphic functions on a Riemann surface $X$ and $z$ be a point of $X$. Denoting by $v_z(f),v_z(g)$ the valuation of $f$ and $g$ at $z$ and by ev$_z$ the evaluation at $z$ we set 
\[\{f,g\}_z=(-1)^{v_z(f)v_z(g)}{\rm ev}_z\left(\frac{f^{v_z(g)}}{g^{v_z(f)}}\right)\in \C^*.\]

\begin{proposition}\label{tempere}
The form $\eta$ is exact in a neighborhood of $z\in \widehat{C}$ if and only if $|\{x,y\}_z|=1$. 
\end{proposition}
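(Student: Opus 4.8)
The plan is to reduce exactness of $\eta$ near $z$ to the vanishing of a single period, compute that period from the local expansions of $x$ and $y$ at $z$, and recognise the result as minus the logarithm of $|\{x,y\}_z|$.

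\emph{Reduction to a period.} Fix a local coordinate $t$ of $\widehat C$ at $z$ and write $x=t^p u(t)$ and $y=t^q v(t)$, where $u,v$ are holomorphic and non-vanishing near $t=0$ and $p=v_z(x)$, $q=v_z(y)$ (with negative exponents allowed if $x$ or $y$ has a pole); this is the normalisation used in the preceding proposition, only dropping the coprimality. A small punctured neighbourhood of $z$ in $C$ is an annulus $\{0<|t|<\varepsilon\}$, on which $\eta|_C$ is closed because $d\eta|_C=0$. By the de Rham theory of the punctured disk, $\eta$ is exact there if and only if its period $\int_{|t|=\rho}\eta$ vanishes for some (hence any) small $\rho>0$; by the preceding proposition this is also exactly the condition under which $V$ extends continuously across $z$. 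If $z$ already projects to a point of $(\C^*)^2$ then $p=q=0$, the form $\eta$ is smooth at $z$, and $|\{x,y\}_z|=1$, so the statement holds trivially; we may thus assume $z$ is an ideal point.

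\emph{Computing the period.} Write $\omega(f,g)=\log|g|\,d\arg f-\log|f|\,d\arg g$, so that $\eta=\omega(x,y)$, and note that $\omega$ is additive in each slot with respect to multiplication. Hence
\[
\eta=\omega(t^p,t^q)+\omega(t^p,v)+\omega(u,t^q)+\omega(u,v).
\]
The first term equals $pq\,\omega(t,t)=0$. The last term $\omega(u,v)$ is a smooth $1$-form on the full disk $\{|t|<\varepsilon\}$, and it is closed there since $d\omega(u,v)=-\im\bigl(\tfrac{du}{u}\wedge\tfrac{dv}{v}\bigr)$ vanishes on the Riemann surface $\widehat C$ for the same reason that $d\eta|_C$ does; so it contributes $0$ to the period. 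Finally, expanding $\omega(t^p,v)=p\log|v|\,d\arg t-p\log|t|\,d\arg v$ and $\omega(u,t^q)=q\log|t|\,d\arg u-q\log|u|\,d\arg t$: the terms carrying a factor $\log|t|$ integrate over $\{|t|=\rho\}$ to $\log\rho$ times the winding number of $v$, resp.\ $u$, about the circle, hence to $0$; and the remaining terms give, by the mean value property of the harmonic functions $\log|u|,\log|v|$,
\begin{align*}
\int_{|t|=\rho}\eta
&= p\int_0^{2\pi}\log|v(\rho e^{i\theta})|\,d\theta-q\int_0^{2\pi}\log|u(\rho e^{i\theta})|\,d\theta\\
&= 2\pi\bigl(p\log|v(0)|-q\log|u(0)|\bigr).
\end{align*}

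\emph{Comparison with the tame symbol.} From the definition, $\{x,y\}_z=(-1)^{pq}\,\mathrm{ev}_z\bigl(x^q/y^p\bigr)=(-1)^{pq}u(0)^q v(0)^{-p}$, so that $\log|\{x,y\}_z|=q\log|u(0)|-p\log|v(0)|=-\tfrac1{2\pi}\int_{|t|=\rho}\eta$. Therefore the period vanishes precisely when $|\{x,y\}_z|=1$, and combined with the first step this proves the proposition. I do not expect a genuine obstacle: this is a short local computation, essentially a repackaging of the period already appearing in the proof of the preceding proposition, in the same spirit as classical formulas for the periods of regulator currents around cusps. The points that warrant a little care are the bi-multiplicative splitting of $\eta$, checking that the $\log|t|$-terms and the $\omega(u,v)$-term really drop out of the period, and matching the sign and the factor $2\pi$ against the chosen normalisation of the tame symbol.
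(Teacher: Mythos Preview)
Your proof is correct and follows essentially the same route as the paper: compute the period of $\eta$ around a small circle at $z$ and identify it with $\pm 2\pi\log|\{x,y\}_z|$. The only difference is cosmetic: the paper reuses the special Puiseux parametrisation $x=t^p$, $y=t^qF(t)$ from the preceding proposition (so $u\equiv 1$ in your notation) and reads off $\int_{C_z}\eta=2\pi p\log|F(0)|$ directly, whereas you keep the symmetric form $x=t^pu$, $y=t^qv$ and use the bi-multiplicativity of $\omega(f,g)$ to split the period into four pieces. Your version is slightly more self-contained (no coprimality or ramification reduction needed) at the cost of a little extra bookkeeping. Incidentally, your sign $\int_{|t|=\rho}\eta=-2\pi\log|\{x,y\}_z|$ is the correct one; the paper's displayed equality drops a minus sign, which is harmless for the statement since only the vanishing of this quantity matters.
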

\begin{proof}
We compute the tame symbol $\{x,y\}_z$ using Puiseux coordinates: this gives $(-1)^{pq}F(0)^{-p}$. Hence, given a small circle $C_z$ around $z$ we get 
\[\int_{C_z}\eta=2\pi p \log |F(0)|=2\pi \log|\{x,y\}_z|. \]
\end{proof}

We give a name to polynomials verifying the condition in the proposition:
\begin{definition}
A polynomial $P\in\C[X^{\pm 1},Y^{\pm 1}]$ is said to be {\it tempered} if $|\{x,y\}_z|=1$ for all $z\in \widehat{C}$.
\end{definition}
We see from Proposition \ref{tempere} that an exact polynomial is tempered.
Let us leverage the proposition to describe better the set of tempered polynomials. Write $P=\sum_{(i,j)\in \Z^2} c_{i,j}X^iY^j$ and let $\Delta$ be the Newton polygon of $P$, that is the convex hull of the set of indices $(i,j)\in\Z^2$ such that $c_{i,j}\ne 0$.
The group $\SL_2(\Z)$ acts on $(\C^*)^2$ by monomial transformations and preserves the form $\eta$. It follows that the induced action on polynomials preserve the family of exact ones.

Given a polynomial $P$ with polygon $\Delta$, each side $s$ of the Newton polygon can be identified with the line $j=0$ with a monomial transformation. Collecting the monomials appearing along this line, we get a polynomial that we call the side polynomial $P_s\in\C[X^{\pm 1}]$. 

 \begin{proposition}
Let $P\in\C[X^{\pm 1},Y^{\pm 1}]$ be a polynomial with Newton polygon $\Delta$. The following assertions are equivalent.
\begin{enumerate}
\item $P$ is tempered.
\item For all sides $s$ of $\Delta$, the roots of the polynomial $P_s$ have modulus 1. 
\item The form $\eta|_C$ defines a cohomology class in $H^1(\widehat{C},\R)$.  
\end{enumerate}
\end{proposition}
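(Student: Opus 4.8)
The plan is to get $(1)\Leftrightarrow(3)$ almost for free from Proposition \ref{tempere}, and to obtain $(1)\Leftrightarrow(2)$ from a local Newton--Puiseux analysis of $\widehat{C}$ at the points lying over the toric boundary attached to the Newton polygon $\Delta$.

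For $(1)\Leftrightarrow(3)$: since $C$ is a curve, $d\eta$ vanishes on it, so $\eta|_C$ always defines a class $[\eta|_C]\in H^1(C,\R)$. Writing $\widehat{C}\setminus C=\{z_1,\dots,z_r\}$, the restriction map $H^1(\widehat{C},\R)\to H^1(C,\R)$ is injective (its kernel is the image of $H^1(\widehat{C},C;\R)=0$, by excision to small disks) and its image is the set of classes all of whose periods around the $z_\ell$ vanish. By the computation in the proof of Proposition \ref{tempere}, the period of $\eta$ around $z_\ell$ equals $2\pi\log|\{x,y\}_{z_\ell}|$; hence $[\eta|_C]$ comes from $H^1(\widehat{C},\R)$ if and only if $|\{x,y\}_{z_\ell}|=1$ for every $\ell$, i.e. if and only if $P$ is tempered.

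For $(1)\Leftrightarrow(2)$ I would first note that all three conditions, as well as $|\{x,y\}_z|$, are unchanged when $P$ is multiplied by a Laurent monomial (which only translates $\Delta$ and multiplies each $P_s$ by a monomial of $\C[X^{\pm1}]$) or transformed by a monomial transformation $\gamma\in\SL_2(\Z)$ (which permutes the sides of $\Delta$ and carries each side polynomial to a side polynomial of $P^\gamma$ up to a monomial); the $\SL_2(\Z)$-invariance of $|\{x,y\}_z|$ follows from bimultiplicativity of the tame symbol together with the fact that only absolute values are involved, which kills the sign. Then let $z\in\widehat{C}\setminus C$. If $z$ lies over a point of $(\C^*)^2$ --- a singular point of the affine curve --- then $v_z(x)=v_z(y)=0$ and $\{x,y\}_z=1$, so $z$ imposes nothing. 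Otherwise $(v_z(x),v_z(y))=g\pi$ with $g\ge 1$ and $\pi$ primitive, and after applying a suitable $\gamma\in\SL_2(\Z)$ and a monomial normalization we may assume $v_z(x)=0$, $v_z(y)=g>0$ and $\Delta\subset\{j\ge0\}$. Since $v_z(x)=0$ the function $x$ has a finite nonzero limit $x_\infty$ at $z$, and letting the uniformizer tend to $0$ in $P(x,y)=0$ shows that $x_\infty$ is a root of the truncation of $P$ to the bottom face $\tau=\Delta\cap\{j=0\}$; this face cannot be a single vertex, for then that truncation would be a nonzero monomial with no root in $\C^*$, so $\tau$ is a genuine side $s$ and $x_\infty$ is a root of $P_s$. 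A direct evaluation then gives $|\{x,y\}_z|=|x_\infty|^{g}$. Conversely, the Newton--Puiseux expansions of $C$ along $y=0$ realize every root of every $P_s$ as such a limit $x_\infty$. Putting these together, $P$ is tempered if and only if $|x_\infty|=1$ for every root of every side polynomial, which is condition $(2)$.

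The delicate point is the Newton--Puiseux bookkeeping at infinity: showing that the directions $(v_z(x),v_z(y))$ of the ideal points are always normal to a side of $\Delta$ (so that no ``corner'' contributions occur) and that the points of $\widehat{C}$ over a given toric divisor correspond, with the right multiplicities, precisely to the roots of the associated side polynomial. The $\SL_2(\Z)$-invariance of $|\{x,y\}_z|$ is routine but deserves a careful word because of the sign in the definition of the tame symbol.
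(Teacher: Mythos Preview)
Your proposal is correct and follows essentially the same approach as the paper. The only cosmetic difference is that for $(1)\Leftrightarrow(2)$ you first apply an $\SL_2(\Z)$-transformation to normalize the ideal point so that $v_z(x)=0$ and then read off $|\{x,y\}_z|=|x_\infty|^g$ directly, whereas the paper keeps the general Puiseux parametrization $x=t^p,\ y=t^qF(t)$ and expands $P(t^p,t^qF(t))$ to identify $F(0)$ as a root of the relevant side polynomial (with $|\{x,y\}_z|=|F(0)|^{-p}$ already computed in Proposition~\ref{tempere}); your extra remarks on $\SL_2(\Z)$-invariance and on singular points of $C$ inside $(\C^*)^2$ are correct and simply make explicit what the paper leaves implicit.
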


\begin{proof}
The equivalence of $1.$ and $3.$ is clear from Proposition \ref{tempere}. 
 
Write $P=\sum_{i,j\in\Z}c_{ij}X^iY^j$ and let $z$ be an ideal point of $\hat{C}$. We consider a Newton-Puiseux coordinate as before, that is $x=t^p,y=t^qF(t)$. Then expanding $P(t^p,t^qF(t))$ into powers of $t$, we get a lower term of the form
\[\sum_{pi+qj=N}c_{i,j}F(0)^jt^{N}+o(t^N)\]
 where the line $pi+qj=N$ is a side of $\Delta$.  We get that $F(0)$ is a zero of the side polynomial $P_s=\sum_{pi+qj=N}c_{ij}X^j$. If $P$ is tempered, then $F(0)$ has modulus 1. Moreover, any root of any side polynomial gives rise to at least one Newton-Puiseux expansion and hence to one ideal point. This proves the equivalence of the first two items. 
\end{proof}

\begin{remark}[Case of real polynomials]
Suppose that $P$ is a real polynomial, tempered and irreducible over $\C$.

We then observe that $\eta$ satisfies $\sigma^*\eta=-\eta$ and hence its cohomology 
class belongs to the space $H^1(\widehat{C},\R)^-$ whose dimension is the genus of $\widehat{C}$. 
\end{remark}

\section{Extrema of the volume function}

Let $P$ be an exact polynomial and $C$ be the smooth part of the zero set of $P$ in $(\C^*)^2$. We denote by $\overline{C}$ the normalization of the zero set of $P$ in $(\C^*)^2$.  It is also the set of finite points of $\widehat{C}$, where neither $x$ nor $y$ have a zero or a pole. For this whole section, we choose a volume function on $C$.

We are interested in this section in the extrema of a volume function $V$ as it will turn out in the next
section that these extrema are the key input in a formula for the Mahler measure of exact polynomials. We will first describe two 
geometric tools to understand the variations of the volume, then go on with a study of critical points
for the volume before describing the extrema. At the end of this section, we are able to prove a first theorem on exact polynomial: they should have a zero in the torus $|x| = |y|=1$. 

\subsection{Amoeba and Gauss logarithmic map}

\begin{definition}
The {\it amoeba} of $C$ is the image of the map $\mu:\overline{C}\to \R^2$ defined by $\mu(x,y)=(\log|x|,\log|y|)$. 
\end{definition}

\begin{definition}
The {\it logarithmic Gauss map} is the map $\gamma:C\to \P^1(\C)$ defined by $\gamma(x,y)=[x\partial_xP,y\partial_yP]$. 
\end{definition}

We observe that $\gamma$ extends to $\widehat{C}$ as a holomorphic function. There is a relation between these two notions as shown in the following proposition, taken from \cite{mik00}.

\begin{proposition}\label{mik}
Let $C\subset(\C^*)^2$ be the smooth part of the curve defined by $P\in \C[X^{\pm 1},Y^{\pm 1}]$ and set $C_\R=C\cap \R^2$. Then 
\[C_\R\subset \{(x,y)\in C, d\mu \text{ is not invertible }\}=\gamma^{-1}(\P^1(\R))\]
\end{proposition}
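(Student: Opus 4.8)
The plan is to establish the statement as two independent facts: the set equality $\{(x,y)\in C:\ d\mu\text{ not invertible}\}=\gamma^{-1}(\P^1(\R))$, which I expect to hold for any $P\in\C[X^{\pm1},Y^{\pm1}]$, and the inclusion $C_\R\subset\gamma^{-1}(\P^1(\R))$, for which I will use that $P$ has real coefficients (the case relevant to $A$-polynomials). I would prove the equality by a purely local computation. First, fix a smooth point $(x_0,y_0)\in C$: since $dP(x_0,y_0)\neq0$ I may assume $\partial_yP(x_0,y_0)\neq0$ (the other case being symmetric), so by the implicit function theorem $C$ is near $(x_0,y_0)$ the graph $y=h(x)$ of a holomorphic function and $t=x$ serves as a local holomorphic coordinate on $C$.

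In this coordinate $\mu=(\re f,\re g)$ with $f(t)=\log t$ and $g(t)=\log h(t)$ for suitable local branches of the logarithm. Writing $t=s+ir$, a direct computation shows that the Jacobian determinant of $\mu$ in the real coordinates $(s,r)$ equals $\pm\im\!\big(f'(t)\,\overline{g'(t)}\big)$, which vanishes precisely when $f'(t)$ and $g'(t)$ are $\R$-proportional in $\C$, i.e. when $[f'(t):g'(t)]\in\P^1(\R)$. Next, differentiating the identity $P(t,h(t))\equiv0$ gives $h'=-\partial_xP/\partial_yP$, hence
\[ \frac{g'(t)}{f'(t)}=\frac{t\,h'(t)}{h(t)}=-\frac{x\,\partial_xP}{y\,\partial_yP}, \]
so that $\gamma(x,y)=[x\partial_xP:y\partial_yP]=[-g'(t):f'(t)]$ as a point of $\P^1(\C)$. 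Combining the two, $\gamma(x,y)\in\P^1(\R)$ if and only if $[f'(t):g'(t)]\in\P^1(\R)$, i.e. if and only if $d\mu$ is not invertible at $(x,y)$, which is the claimed equality.

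For the inclusion, I would argue that at a point $(x_0,y_0)\in C_\R$ the numbers $\partial_xP(x_0,y_0)$ and $\partial_yP(x_0,y_0)$ are real (because $P$ has real coefficients and $x_0,y_0\in\R$), so $\gamma(x_0,y_0)=[x_0\partial_xP(x_0,y_0):y_0\partial_yP(x_0,y_0)]\in\P^1(\R)$, and the first part then gives that $d\mu$ is not invertible there. Equivalently, and more in the spirit of Mikhalkin's argument, complex conjugation $\sigma(x,y)=(\overline x,\overline y)$ preserves $C$, fixes $(x_0,y_0)$, and satisfies $\mu\circ\sigma=\mu$; since $d\sigma$ is an antilinear involution of the complex line $T_{(x_0,y_0)}C$ it admits a nonzero eigenvector $w$ with $d\sigma(w)=-w$, and then $d\mu(w)=d\mu(d\sigma(w))=-d\mu(w)$ forces $w\in\ker d\mu$. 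I expect the whole argument to be elementary; the one point needing care is the bookkeeping in $\P^1(\C)$ at the zeros of $\partial_xP$ or $\partial_yP$ (where, say, $[f':g']=[1:0]$ and $\gamma=[0:1]$), which still lie in $\P^1(\R)$ — and this is precisely why the critical locus of $\mu$ is all of $\gamma^{-1}(\P^1(\R))$ rather than just the locus where both partials are nonzero.
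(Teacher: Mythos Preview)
Your proof is correct and, for the equality $\{d\mu\text{ not invertible}\}=\gamma^{-1}(\P^1(\R))$, proceeds by the same local linear-algebra computation as the paper. The only difference is packaging: the paper works in ambient logarithmic coordinates $(u,v)$ with $(x,y)=(x_0e^u,y_0e^v)$, in which $d\mu$ is simply $(u,v)\mapsto(\re u,\re v)$ and the tangent line to $C$ is $x\partial_xP\cdot u+y\partial_yP\cdot v=0$, so non-invertibility amounts to this line meeting $i\R^2$, i.e.\ to $[x\partial_xP:y\partial_yP]\in\P^1(\R)$. Your version picks an intrinsic holomorphic coordinate on $C$ via the implicit function theorem and computes the same Jacobian as $\im(f'\overline{g'})$. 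The paper's choice is a little cleaner because it treats the two partials symmetrically and avoids the case split $\partial_yP\ne0$ versus $\partial_xP\ne0$ and the bookkeeping at zeros of a partial that you flag at the end.

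On the inclusion $C_\R\subset\gamma^{-1}(\P^1(\R))$, you are in fact more careful than the paper: the paper's proof only argues the set equality and leaves the inclusion unaddressed, and the proposition is stated for arbitrary $P\in\C[X^{\pm1},Y^{\pm1}]$. As you note, the inclusion genuinely needs $P$ to be real (otherwise one can cook up, say, $P=Y-X-i(X^2-1)$ with $(1,1)\in C_\R$ but $\gamma(1,1)=[-1-2i:1]\notin\P^1(\R)$). Both of your arguments for the real case --- the direct one via reality of the partials and the conjugation-symmetry argument in the spirit of Mikhalkin --- are valid.
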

\begin{proof}
Let $z=(x,y)$ be a point of $C$ and consider the zero set of the function $(u,v)\mapsto P(xe^u,ye^v)$ for $u$ and $v$ small. It defines a smooth subvariety around $0$ whose tangent space is given by $x\partial_x P(x,y) u+y\partial_y P(x,y) v=0$. In these coordinates, the derivative of $\mu$ is simply the map $(u,v)\mapsto (\re(u),\re(v))$. 
This map is non invertible if and only if there exists $(u,v)\in \R^2$ such that $x\partial_x P(x,y) u+y\partial_y P(x,y) v=0$. This is equivalent to $\gamma(x,y)$ being in $\P^1(\R)$. 
\end{proof}

We can even specify where $\mu$ preserves orientation, recalling that $C$ is naturally oriented, being a complex curve. Here comes a convention: 

\begin{definition}
A non-zero vector in $\C^2$ may be written $w=u+iv$ for $u,v\in\R^2$. We will say that $[w]\in \P^1_{\pm}(\C)$ if $\mp \det(u,v)>0$. 
In coordinates, $[z,1]\in \P^1_{\pm}(\C)$ if $\pm \im(z)>0$. 
\end{definition}

\begin{proposition}[Sequel of Proposition \ref{mik}]
In the same settings, for any $\epsilon\in\{\pm 1\}$ and $z\in P^1_{\epsilon}(\C)$, the differential $d_z\mu$ preserves the orientation if $\epsilon=1$ and reverses it if $\epsilon=-1$. 
\end{proposition}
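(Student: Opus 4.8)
The plan is to reduce the claim to the linear model already set up in the proof of Proposition~\ref{mik} and then to evaluate a single $2\times 2$ Jacobian. Fix a point $z=(x,y)\in C$ with $\gamma(z)\in\P^1_\epsilon(\C)$, $\epsilon\in\{\pm 1\}$. In the coordinates $(u,v)$ around the origin for which $C$ is parametrised by $(u,v)\mapsto(xe^u,ye^v)$, the derivative of $\mu$ is $(u,v)\mapsto(\re u,\re v)$ and $T_zC$ is the complex line $L=\{(u,v)\in\C^2:au+bv=0\}$, with $(a:b)=\gamma(z)$, $a=x\partial_xP$, $b=y\partial_yP$. Since $\gamma(z)\notin\P^1(\R)$ we have $a\ne 0$ and $b\ne 0$, so $d_z\mu$ is invertible by Proposition~\ref{mik}; it remains only to determine the sign of its determinant relative to the complex orientation of $T_zC$ on the source and the standard orientation of $\R^2$ on the target.

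First I would parametrise $L$ holomorphically by the $\C$-linear isomorphism $\phi\colon\C\to L$, $\phi(u)=(u,cu)$ with $c=-a/b$. As $\phi$ carries the standard orientation of $\C$ to the complex orientation of $L$, the map $d_z\mu$ is orientation preserving if and only if $d_z\mu\circ\phi$ is. Writing $u=s+it$, this composite equals $(s,t)\mapsto(s,\ (\re c)\,s-(\im c)\,t)$, whose Jacobian determinant is $-\im c=\im(a/b)$. Hence $d_z\mu$ preserves orientation when $\im(a/b)>0$ and reverses it when $\im(a/b)<0$.

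It then remains to match this with the convention defining $\P^1_\pm(\C)$: since $\gamma(z)=[a:b]=[a/b:1]$, one has $\gamma(z)\in\P^1_+(\C)$ exactly when $\im(a/b)>0$ and $\gamma(z)\in\P^1_-(\C)$ exactly when $\im(a/b)<0$; combined with the Jacobian computation this is the asserted dichotomy.

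The argument is essentially a two-line computation, so I do not expect a genuine obstacle: the only delicate point is the consistent bookkeeping of orientations. One must check that $\phi$ really induces the complex orientation on $T_zC$ (so that it contributes no sign) and that the representative of $\gamma(z)=[a:b]$ used in the definition of $\P^1_\pm$ is coherent with the vector cutting out $L$. The identity $\det(\re w,\im w)=\det(\re w',\im w')$ for $w=(a,b)$ and the direction vector $w'=(b,-a)$ of $L$ shows these choices agree, so no sign is lost along the way.
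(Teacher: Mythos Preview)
Your proof is correct and follows essentially the same approach as the paper: both compute the Jacobian of the real-linear map $(u,v)\mapsto(\re u,\re v)$ restricted to the complex line $au+bv=0$ and find its sign equals that of $\im(a/b)$, then match this with the convention $[a/b:1]\in\P^1_\pm(\C)\iff\pm\im(a/b)>0$. The paper picks an abstract solution $(u,v)$ and uses the oriented basis $(u,v),(iu,iv)$, whereas you choose the explicit parametrisation $u\mapsto(u,cu)$ with $c=-a/b$; these differ only cosmetically.
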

\begin{proof}
Take a non-zero solution $(u,v)$ of the equation $$x\partial_x P(x,y) u+y\partial_y P(x,y) v=0.$$ Then an oriented basis of $T_zC$ is given by $(u,v), (iu,iv)$. The Jacobian of $\mu$ at $z$ in this basis is $\re(v)\im(u)-\re(u)\im(v)=\im(u\overline{v})$. This number has the same sign as $\im \frac{x\partial_xP}{y\partial_yP}$. 
\end{proof}

With these two concepts at hand, we proceed with the study of the critical points of the volume.

\subsection{Critical points of $V$}

We now look at  the volume function $V$ on $C$ and spot its critical points.

\begin{proposition} \label{critique}
A point $(x,y)\in C$ is a critical point of $V$ if and only if the following equation holds:
\[\log|x| x\partial_xP(x,y)+\log|y|y\partial_y P(x,y)=0.\]
In other terms, a point $(x,y)$ of $C$ is a zero of $\eta$ if either $\mu(x,y)=0$ or the two vectors of $\R^2$ $\mu(x,y)$ and $\gamma(x,y)$ are projectively dual. 
\end{proposition}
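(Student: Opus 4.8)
The plan is to reduce the claim to a direct computation of $dV=\eta|_C$ in suitable coordinates. Recall that by definition $dV=\eta=\log|y|\,d\arg x-\log|x|\,d\arg y$ on $C$. A point $(x,y)\in C$ is critical for $V$ precisely when $\eta$ vanishes in the cotangent space $T^*_{(x,y)}C$. So first I would pick a local holomorphic coordinate and express the $1$-form $\eta$ against a basis of $T_{(x,y)}C$, then read off when it is zero.

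Concretely, following the proof of Proposition~\ref{mik}, I would use the substitution $(u,v)\mapsto P(xe^u,ye^v)$, so that the tangent space $T_{(x,y)}C$ is cut out by $x\partial_x P\cdot u + y\partial_y P\cdot v=0$, i.e.\ spanned by the complex vector $w=(y\partial_y P,\,-x\partial_x P)$ (thinking of $u,v$ as complex infinitesimals of $\log x,\log y$). In these coordinates $d\arg x = \im(du)$ and $d\arg y=\im(dv)$, while $\log|x|,\log|y|$ are just the fixed real numbers $\re(\log x),\re(\log y)$. Hence on $T_{(x,y)}C$, applied to the tangent vector $w=(u,v)$ with $x\partial_x P\,u+y\partial_y P\,v=0$, we get $\eta(w)=\log|x|\,\im u-\log|y|\,\im v$. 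Substituting $u=y\partial_y P\cdot\lambda$, $v=-x\partial_x P\cdot\lambda$ for $\lambda\in\C$, this becomes $\im\big(\lambda(\log|x|\,y\partial_y P-\log|y|\,x\partial_x P)\big)$. Wait — that has a sign discrepancy with the claimed equation, so I would need to track the orientation/sign convention carefully: the correct statement should emerge as $\eta$ vanishing identically in $\lambda\in\C$ iff the complex number $\log|x|\,x\partial_x P+\log|y|\,y\partial_y P$ (or its conjugate-paired variant) is zero. Since $\eta$ vanishes on the whole real line $\lambda\in\R$ only when that complex coefficient vanishes, this gives the displayed equation.

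For the second, geometric reformulation, I would interpret the quantities: $\mu(x,y)=(\log|x|,\log|y|)\in\R^2$ and $\gamma(x,y)=[x\partial_x P:y\partial_y P]\in\P^1(\C)$. The equation $\log|x|\,x\partial_x P+\log|y|\,y\partial_y P=0$ says exactly that the real vector $(\log|x|,\log|y|)$ pairs to zero with the (a priori complex) vector $(x\partial_x P,y\partial_y P)$ under the standard bilinear form. If $\mu(x,y)=0$ this is automatic; otherwise, since $\mu(x,y)$ is a nonzero real vector, the pairing forces $(x\partial_x P,y\partial_y P)$ to lie in the complexification of the line orthogonal to $\mu(x,y)$, which is a real line — so $\gamma(x,y)\in\P^1(\R)$ and is the projective dual point to $\mu(x,y)$. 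Conversely duality plus $\mu\neq 0$ gives the vanishing. I would spell this out as a short linear-algebra remark.

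The only real subtlety — and the step I expect to need the most care — is the bookkeeping of signs and of real-versus-imaginary parts when writing $\eta$ in the $(u,v)$ coordinates, since $\eta$ mixes $\log|\cdot|$ (a real part) with $d\arg$ (an imaginary part) and the tangent vector $w$ is complex; getting $\im(u\bar v)$-type expressions consistent with the orientation convention from the previous subsection is where an error could creep in. Everything else is a one-line substitution. I would therefore present the proof as: (1) set up Puiseux/logarithmic coordinates as in Proposition~\ref{mik}; (2) evaluate $\eta$ on the spanning tangent vector; (3) observe $\eta|_{T_zC}=0$ iff the displayed complex equation holds; (4) translate into the projective-duality statement via the elementary linear algebra above.
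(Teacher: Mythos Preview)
Your approach is essentially identical to the paper's: pass to logarithmic coordinates $(u,v)$ via $(xe^u,ye^v)$, read the tangent space as $x\partial_xP\cdot u+y\partial_yP\cdot v=0$, and evaluate $\eta$ there. The paper's proof is just this, in three lines.

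The ``sign discrepancy'' you flag is not a convention issue but a slip in your own computation: from $\eta=\log|y|\,d\arg x-\log|x|\,d\arg y$ one gets $\eta(u,v)=\log|y|\,\im u-\log|x|\,\im v$, not $\log|x|\,\im u-\log|y|\,\im v$ as you wrote. With the correct expression, substituting $u=\lambda\,y\partial_yP$, $v=-\lambda\,x\partial_xP$ gives
\[
\eta=\im\bigl(\lambda(\log|x|\,x\partial_xP+\log|y|\,y\partial_yP)\bigr),
\]
which vanishes for all $\lambda\in\C$ (equivalently for $\lambda=1$ and $\lambda=i$, a real basis of $T_{(x,y)}C$) iff the displayed complex equation holds. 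So there is nothing delicate to track once that typo is fixed; the geometric reformulation you give afterwards is fine.
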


\begin{proof}
We consider as before a point of the form $(xe^u,ye^v)$ belonging to $C$. Then at first order we have $u x\partial_xP(x,y)+v y\partial_y P(x,y)=0$ whereas $\eta_{(x,y)}(u,v)=-\log|x|\im v +\log|y| \im u$. The conclusion follows.
\end{proof}

We also consider a projective model $\hat C$. We introduce a topological notion to describe the volume function at ideal or ramification points:

\begin{definition}
We will say that a continuous real function $f$ on a topological manifold $X$ has a {\it saddle} of order $k$ at $x$ if there is a local coordinate $z$ centered at $x$ such that $f(z)=f(x)+\re(z^k)+o(z^k)$. 
In particular $x$ is not an extremum of $f$ if $k>1$. 
\end{definition}

Notice that if $X$ and $f$ are smooth and $df(x)\ne 0$, then $f$ has a saddle of order $1$ at $x$. If $f$ has Morse critical point of index $1$ at $x$, then it has a saddle of order $2$. 

The next proposition shows that the behaviour of $V$ at an ideal point is quite simple.
\begin{proposition}\label{ideal}
Let $z$ be a point of $\widehat{C}\setminus \overline{C}$ and denote by $k$ the order of ramification of $\gamma$ at $z$. 
If $\gamma$ is not constant around $z$,  $V$ has a saddle point of index $k$ at $z$. 
\end{proposition}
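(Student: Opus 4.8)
The plan is to make the volume function completely explicit near $z$ in a Newton--Puiseux coordinate and then to recognize its leading behaviour. After possibly exchanging the roles of $x$ and $y$ --- which replaces $\eta$ by $-\eta$, hence $V$ by $-V$, and replaces $\gamma$ by the reciprocal map $[a:b]\mapsto[b:a]$, so that neither the statement nor the hypothesis is affected --- one may assume $p=\ord_z(x)\neq 0$ and use, exactly as in the proof that $V$ extends to $\widehat C$, a local coordinate $t=\rho e^{i\theta}$ with $x=t^p$ and $y=t^qF(t)$, $F(0)\neq 0$. Exactness forces $|F(0)|=1$, and there one already knows $\eta=p\log|F|\,d\theta-p\log\rho\,d\arg F$.

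The first step is to integrate this form. Let $L=\log F$ be a holomorphic branch near $0$, so $L(0)=i\alpha$ with $\alpha=\arg F(0)$, and put $g=L-L(0)$; then $g(0)=0$ and, $\gamma$ (equivalently $F$) being non-constant near $z$, one has $g(t)=c_kt^k+O(t^{k+1})$ with $c_k\neq 0$ for a well-defined integer $k\geq 1$. Writing $\log\rho\,d\arg F=d(\log\rho\cdot\arg F)-\arg F\,d\log\rho$ and using the identity $\log|F|\,d\theta+\arg F\,d\log\rho=\im\!\big(L\,\tfrac{dt}{t}\big)=\alpha\,d\log\rho+d\im h$ with $h(t)=\int_0^t\frac{g(s)}{s}\,ds$ holomorphic, one obtains $\eta=d\big(-p\log\rho\cdot\im g+p\,\im h\big)$; since this primitive tends to $0$ as $t\to 0$,
\[
V(t)-V(z)=-p\log|t|\cdot\im g(t)+p\,\im h(t).
\]

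The second step identifies $k$ with the ramification order of $\gamma$. From $G(t):=\tfrac{tF'(t)}{F(t)}=tg'(t)=kc_kt^k+O(t^{k+1})$ and the relation $p\,x\partial_xP+(q+G(t))\,y\partial_yP=0$ valid along the parametrization, one reads $\gamma(t)=[\,q+G(t):-p\,]$; as $p\neq 0$ this is holomorphic into the finite chart and $\gamma(t)-\gamma(z)=-\tfrac1p G(t)$ vanishes to order exactly $k$. Substituting $g$ and $h$ into the formula above and discarding terms that are $o(|t|^k\log\tfrac1{|t|})$,
\[
V(t)-V(z)=p\Big(\tfrac1k-\log|t|\Big)\im\!\big(c_kt^k\big)+o\!\big(|t|^k\log\tfrac1{|t|}\big).
\]

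The last, and only delicate, step is to read off a saddle of order $k$: one must exhibit a \emph{topological} coordinate $z$ centered at the point in which $V(t)-V(z)=\re(z^k)+o(z^k)$. With $t=\rho e^{i\theta}$ and $c_k=|c_k|e^{i\beta}$, the leading term equals $p|c_k|\,\rho^k\big(\tfrac1k+\log\tfrac1\rho\big)\sin(k\theta+\beta)$. I would absorb the phase $\beta$ and the sign of $p$ into a rotation of the angular variable, and absorb the radial profile $\rho\mapsto\rho\big(|p||c_k|(\tfrac1k+\log\tfrac1\rho)\big)^{1/k}$ --- which for small $\rho$ is positive and, by a one-line derivative computation, strictly increasing with limit $0$ at $\rho=0$ --- into a homeomorphic reparametrization $\rho\mapsto r$ of the radius; in the new coordinate $z=re^{i\psi}$ the leading term becomes exactly $\re(z^k)$, and since $r^k\asymp\rho^k\log\tfrac1\rho$ the remainder is $o(r^k)$, giving the claim (and, for $k\geq 2$, that $z$ is not an extremum). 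I expect this last step to be the main obstacle: unlike at finite smooth points, $V$ is genuinely non-differentiable at an ideal point --- the $\rho^k\log\tfrac1\rho$ factor cannot be removed --- so one cannot work in a holomorphic chart and must verify by hand that a homeomorphism flattens the logarithmic radial factor while keeping the error subdominant.
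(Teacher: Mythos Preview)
Your argument is correct and follows essentially the same route as the paper: choose Newton--Puiseux coordinates with $|F(0)|=1$, relate the vanishing order of $tF'/F$ to the ramification of $\gamma$, and integrate $\eta$ to obtain $V(t)-V(z)=-p\log|t|\,\im(c_kt^k)+O(t^{k+1}\log t)$. Your derivation of the primitive via the holomorphic logarithm $L=\log F$ and $h=\int g/s\,ds$ is a tidy alternative to the paper's ``integrate along the ray and by parts'', and your final step --- building the homeomorphism $(\rho,\theta)\mapsto(r,\psi)$ absorbing the $\rho^k\log\tfrac1\rho$ radial factor --- makes explicit what the paper leaves to the reader after displaying the asymptotic; this is indeed the point that deserves a word, and your check that $\rho\mapsto\rho^k(\tfrac1k+\log\tfrac1\rho)$ is increasing near $0$ with $r^k\asymp\rho^k\log\tfrac1\rho$ is exactly what is needed.
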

\begin{proof}
Up to exchanging $x$ and $y$, we can choose Newton-Puiseux coordinates around $z\in \widehat{C}\setminus\overline{C}$ of the form $x=t^p,y=t^qF(t)$ with $p\ne 0$ and $|F(0)|=1$. 

Derivating the equality $P(t^p,t^qF(t))=0$ 
and writing $\gamma=\frac{x\partial_xP}{y\partial_yP}$ we get 
$$-\gamma(t)=\frac{q}{p}+\frac{tF'(t)}{pF(t)}.$$

We deduce from it that we can write $F(t)=e^{i\phi+a_k t^k+O(t^{k+1})}$ with $k>0$ and $a_k\ne 0$ unless $\gamma$ is constant around $t=0$. 
Plugging the formulas for $x$ and $y$ into the differential of $V$ we get:
\[\frac{1}{p}V'(t)=\log |F(t)| d\arg(t)-\log|t| d\arg F(t).\]

Integrating along the ray $[0,t]$ and then by parts we get the following expression which shows that $V$ has a saddle of order $k$: 
\[V(t)=V(z)-p\log|t|\im (a_kt^k)+O( t^{k+1}\log t)\]

\end{proof}

Note in particular that $V$ cannot have an extremum at an ideal point of $\hat C$, unless it is constant.
 
\subsection{Extrema of the volume}

We now leverage the study of critical points to see that the extrema of the volume only happen above the
torus $|x|=|y|=1$ in $\overline{C}$.

\begin{proposition}\label{extrema}
Let $z$ be a point of $\overline{C}\setminus C$ mapping to $(x_0,y_0)$. We denote by $k$ the order of ramification at $z$ of the map $(x,y):\overline{C}\to (\C^*)^2$. Denote by $\gamma:\overline{C}\to \P^1(\C)=\C\cup\{\infty\}$ the logarithmic Gauss map and suppose that $\gamma(z)\ne \infty$. We denote by $l$ the order of ramification of $\gamma$ at $z$. 
\begin{enumerate}
\item If $\log|x_0|\gamma(z)+\log|y_0|\ne 0$ then $V$ has a saddle of order $k$ at $z$. 
\item If $\log|x_0|\gamma(z)+\log|y_0|= 0$ and $\mu(z)\ne 0$ then $V$ has a saddle of order $k+l$ at $z$. 
\item If $\mu(z)=0$ and $\gamma(z)\notin \R$ then $V$ has a local maximum at $z$ if $\im \gamma(z)<0$ and a local minimum if $\im\gamma(z)>0$. 
\item If $\mu(z)=0$ and $\gamma(z) \in \R$ then $V$ does not have a local extremum at $z$.  
\end{enumerate}
\end{proposition}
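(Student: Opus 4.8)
The plan is to reduce the statement to a local computation of the closed $1$-form $\eta=dV$ in a holomorphic coordinate $t$ centred at $z$, after rewriting $\eta$ so that the logarithmic Gauss map appears as an explicit coefficient. Differentiating $P(x,y)=0$ along $\overline C$ gives $\partial_xP\,dx+\partial_yP\,dy=0$, hence $\gamma\,\frac{dx}{x}+\frac{dy}{y}=0$ wherever $y\partial_yP\neq0$, i.e. wherever $\gamma\neq\infty$; combined with $\eta=\im\!\left(\log|y|\,\frac{dx}{x}-\log|x|\,\frac{dy}{y}\right)$ this yields, in a neighbourhood of $z$,
$$dV=\im\!\left(h\,\frac{dx}{x}\right),\qquad h:=\log|y|+\gamma\,\log|x|.$$
Everything follows by expanding the right-hand side and integrating along rays $t=re^{i\theta}$; this is licit because $z$ is isolated in $\overline C\setminus C$ and, by the proposition above, $\eta$ is integrable at $z$, so $V(re^{i\theta})-V(z)=\int_0^r(\text{radial part of }dV)\,ds$.

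In the coordinate $t$, write $\frac{dx}{x}=d\xi$ with $\xi(t)=\frac{c_0}{k}t^k+O(t^{k+1})$, $c_0\neq0$; that the exponent is the ramification order $k$ of $(x,y)$ uses $\gamma(z)\neq\infty$, which forces $\frac{dy}{y}=-\gamma\,d\xi$ to vanish to order $\ge k-1$ as well. Put $\log x=\log x_0+\xi$ and, integrating by parts, $\log y=\log y_0+\zeta$ with $\zeta=-\int_0^t\gamma\,d\xi=-\gamma\xi+\int_0^t\xi\,d\gamma$; write $\gamma=\gamma_0+\gamma_1t^l+O(t^{l+1})$, $\gamma_1\neq0$ (the statement presupposes $\gamma$ non-constant near $z$). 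Using the elementary identity $-\re(\gamma\xi)+\gamma\,\re\xi=i\,\im(\gamma)\,\overline\xi$ one gets
$$h(t)=h_0+\big(\gamma(t)-\gamma_0\big)\log|x_0|+i\,\im\big(\gamma(t)\big)\,\overline{\xi(t)}+\re\!\int_0^t\xi\,d\gamma,\qquad h_0:=\log|x_0|\,\gamma(z)+\log|y_0|,$$
whose last three summands are $O(t^l)$, $O(t^k)$ (of order exactly $k$ iff $\gamma_0\notin\R$), and $O(t^{k+l})$. Since $\frac{dx}{x}=d\xi$, one then integrates $dV=\im(h\,d\xi)$.

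Cases $(1)$ and $(2)$ are then short. If $h_0\neq0$, then $\int_0^t h\,d\xi=h_0\xi+O(t^{k+1})=\frac{h_0c_0}{k}t^k+O(t^{k+1})$, so $V(t)=V(z)+\im\!\big(\frac{h_0c_0}{k}t^k\big)+o(|t|^k)$, and since $h_0c_0\neq0$ a linear change of coordinate turns this into $V(z)+\re(w^k)+o(|w|^k)$: a saddle of order $k$. If $h_0=0$ and $\mu(z)\neq0$, then $\log|x_0|\neq0$ (otherwise $\log|y_0|=0$ too), and $h_0=0$ forces $\log|y_0|=-\gamma_0\log|x_0|\in\R$, hence $\gamma_0\in\R$; so $\im\gamma(t)$ is $O(t^l)$, the middle two terms of $h$ collapse, $h(t)=\gamma_1\log|x_0|\,t^l+O(t^{l+1})$, and $V(t)=V(z)+\im\!\big(\frac{\gamma_1\log|x_0|\,c_0}{k+l}t^{k+l}\big)+o(|t|^{k+l})$ with nonzero coefficient: a saddle of order $k+l$. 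In case $(3)$, $\mu(z)=0$ forces $h_0=0$ and $\log|x_0|=0$, so (since $\gamma_0\notin\R$) the leading behaviour is $h(t)=i\,\im(\gamma_0)\frac{\overline{c_0}}{k}\,\overline t^{\,k}+O(|t|^{k+1})$; using $\im(i\,\overline t\,dt)=\tfrac12 d|t|^2$ one gets $V(t)-V(z)=\frac{\im(\gamma_0)\,|c_0|^2}{2k^2}\,|t|^{2k}\,(1+o(1))$, which has constant sign equal to that of $\im\gamma_0$ — a local maximum when $\im\gamma_0<0$, a local minimum when $\im\gamma_0>0$.

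Case $(4)$, $\mu(z)=0$ and $\gamma_0\in\R$, is the real difficulty: now $h_0=0$ and $\im\gamma_0=0$, so the order-$k$ term of $h$ vanishes and one must push the expansion one step further, getting $h(t)=\frac{i\overline{c_0}}{k}\,\im(\gamma_1t^l)\,\overline t^{\,k}+\re\!\big(\frac{c_0\gamma_1\,l}{k(k+l)}t^{k+l}\big)+O(|t|^{k+l+1})$. Integrating $dV=\im(h\,d\xi)$ along the ray $t=re^{i\theta}$ yields $V(re^{i\theta})-V(z)=\frac{r^{2k+l}}{2k+l}\,g(\theta)+O(r^{2k+l+1})$ with
$$g(\theta)=\frac{|c_0|^2}{k}\,\im\big(\gamma_1e^{il\theta}\big)+\frac{l}{k(k+l)}\,\re\big(c_0\gamma_1e^{i(k+l)\theta}\big)\,\im\big(c_0e^{ik\theta}\big).$$
The end of the argument is a Fourier computation: every frequency occurring in $g$ is a nonzero multiple of $l$ or of $2k+l$, so $g$ has mean zero, while the coefficient of $e^{\pm i(2k+l)\theta}$ in $g$ is a nonzero multiple of $c_0^2\gamma_1\neq0$, so $g\not\equiv0$; a continuous $2\pi$-periodic function with zero mean that is not identically zero changes sign, and picking $\theta_\pm$ with $g(\theta_+)>0>g(\theta_-)$ shows that $V$ exceeds $V(z)$ and falls below $V(z)$ arbitrarily close to $z$, so $z$ is not a local extremum. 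The two steps I expect to cost the most care are the exact bookkeeping of vanishing orders in $(1)$–$(2)$ (the deductions $\log|x_0|\neq0$ and $\gamma_0\in\R$ are precisely what makes the orders come out $k$ and $k+l$) and, above all, carrying the expansion of $h$ to the right order in $(4)$ and checking that $g$ genuinely changes sign.
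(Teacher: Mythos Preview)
Your proof is correct and follows essentially the same route as the paper's: both choose a local coordinate with $x=x_0e^{t^k}$ (the paper writes $y=y_0e^{t^kF(t)}$ while you keep track of $\gamma$ directly via the identity $dV=\im\big((\log|y|+\gamma\log|x|)\,dx/x\big)$), expand to the relevant order, and in case~(4) show that the leading angular coefficient is a nonconstant trigonometric polynomial with zero mean, hence changes sign. Your repackaging through $h$ makes the appearance of $h_0=\log|x_0|\gamma(z)+\log|y_0|$ a bit more transparent, but the computations and the case-by-case logic match the paper's proof line for line.
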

\begin{proof}
Up to a monomial transformation, we can find a local coordinate around $z$ such that $x=x_0e^{t^k}$ and $y=y_0e^{t^kF(t)}$ where $F(t)=F_0+F_lt^l+O(t^{l+1})$. 
Again, derivating the equation $P(x,y)=0$ gives 
\[-\gamma(t)=F(t)+\frac{t}{k}F'(t)=F_0+F_l(1+l/k)t^l+O(t^{l+1}).\]
Plugging the formulas of $x$ and $y$ in the derivative of $V$ one get:
\begin{align*}V(t)=V(z)&+\im(\log|y_0|t^k-\log|x_0| t^k F(t))\\
&+\int_0^t \left(\re(t^kF(t))d\im t^k-\re t^k d\im(t^kF(t))\right).
\end{align*}
If $\log|y_0|-\log|x_0|F_0\ne 0$ then $V$ has a saddle of order $k$ as before. 

Suppose from now that this equation does hold and $\log|x_0|\ne 0$. This implies that $F_0$ is real. 
The next term in the first expression is $-\im(\log|x_0|F_lt^{k+l})$ whereas  the integral has the order of $t^{2k+l}$. We conclude in that case that $V$ has a saddle of order $k+l$. 

Suppose now that $\log|x_0|=\log|y_0|=0$. Then the first expression vanishes identically. 
If $F_0$ is not real, the first order in the integral is equal to $-\frac{1}{2}|t|^{2k}\im F_0$. In that case, $V$ has a maximum if $\im F_0>0$ and a minimum if $\im F_0<0$. 

Suppose now that $F_0$ is real so that this term vanishes. The next term is 
\[-\frac{|t|^{2k+l}r_l}{2k+l}\left(l\cos(k\theta)\sin(\phi_l+(k+l)\theta)+k\sin(\phi_l+k\theta)
\right)\]
where we have written $F_l=r_le^{i\phi_l}$. The trigonometric expression can be further expanded as $-\frac{l}{2}\sin(\phi_l+(2k+l)\theta)-\frac{l}{2}\sin(\phi_l+l\theta)-k\sin(\phi_l+k\theta)$. This expression cannot vanish and its integral over $\theta$ vanishes. This proves that $V$ does not have an extremum at $z_0$. 
\end{proof}

The fourth case will be explored and described more precisely in section 
\ref{ss:maxtancurves}. Moreover, from the previous discussion, we can describe easily the case where the volume is constant:
\begin{coro}
If $P$ is irreducible and $V$ is constant on $C$, then $\gamma$ is constant. As $\gamma$ take rational values at ideal points, this constant should be rational and one can write $P=X^pY^q-\lambda$ for some non-zero complex number $\lambda$. It is equivalent to say that $C$ is a translation of a sub-torus of $(\C^*)^2$ or that $\Delta(P)$ has non-empty interior. 
\end{coro}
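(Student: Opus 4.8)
The plan is to read the whole statement off Proposition~\ref{ideal}, keeping only one short Newton polygon computation for the algebraic conclusion. First I would show that $\gamma$ is constant. Since $P$ is irreducible its zero locus in $(\C^*)^2$ is an irreducible — hence non-complete — curve, so the connected Riemann surface $\widehat C$ carries at least one ideal point $z_0\in\widehat C\setminus\overline C$. Constancy of $V$ on the dense set $C$ makes (the continuous extension of) $V$ constant on all of $\widehat C$, so $V$ is locally constant at $z_0$ and in particular has no saddle there. By the contrapositive of Proposition~\ref{ideal}, $\gamma$ must then be constant near $z_0$; being a holomorphic map $\widehat C\to\P^1(\C)$ on a connected surface, it is constant everywhere, say $\gamma\equiv\gamma_0$. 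Evaluating at $z_0$ in the Newton--Puiseux coordinates $x=t^p,\ y=t^qF(t)$ of that proof, the relation $-\gamma(t)=q/p+tF'(t)/(pF(t))$ gives $\gamma_0=-q/p\in\Q\cup\{\infty\}$, which yields the asserted rationality. (Alternatively one gets $\gamma$ constant directly from Proposition~\ref{critique}: $V$ constant means every point of $C$ is critical, i.e.\ $\log|x|\,\gamma+\log|y|=0$ on $C$; if $\im\gamma$ did not vanish identically this would force $|x|=|y|=1$ on an open arc of $C$, impossible for a complex curve inside the totally real torus, so $\im\gamma$ vanishes on an open set and the holomorphic $\gamma$ is constant.)

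Next I would pin down the shape of $P$. Write $\gamma_0=-q/p$ with $\gcd(p,q)=1$ (taking $(p,q)=(1,0)$ for $\gamma_0=0$ and $(p,q)=(0,1)$ for $\gamma_0=\infty$). By the definition of $\gamma$, the equality $\gamma\equiv-q/p$ on $C$ says exactly that $DP:=(pX\partial_X+qY\partial_Y)P$ vanishes on $C$; since $P$ is irreducible in the Laurent ring, which is a UFD, we get $P\mid DP$. Now $D$ only multiplies the coefficient of $X^iY^j$ in $P$ by $pi+qj$, so the Newton polygon of $DP$ is contained in that of $P$; as Newton polygons add (as Minkowski sums) under multiplication, the quotient $DP/P$ has a one-point Newton polygon, i.e.\ is a monomial, and because the support of $DP$ is contained in the finite support of $P$ — which admits no nonzero self-translation — this monomial must be a constant $c$ (possibly $0$), so $DP=cP$. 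Hence every monomial $X^iY^j$ of $P$ satisfies $pi+qj=c$: the Newton polygon $\Delta(P)$ lies on a line, so it is a segment and has empty interior. Since $\gcd(p,q)=1$ the vector $(q,-p)$ is primitive, so $M=X^qY^{-p}$ belongs to a $\Z$-basis of the character lattice and $P=X^{i_0}Y^{j_0}R(M)$ for a one-variable Laurent polynomial $R$; irreducibility of $P$ forces $\deg R=1$, so $R(T)=a(T-\lambda)$ with $\lambda\ne 0$, i.e.\ $P=X^qY^{-p}-\lambda$ up to the unit $aX^{i_0}Y^{j_0}$.

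Finally, for the three reformulations: the vanishing set $\{x^qy^{-p}=\lambda\}$ is a coset of the $1$-dimensional subtorus $\{x^qy^{-p}=1\}$, and its Newton polygon is the segment joining $(0,0)$ and $(q,-p)$, which has empty interior; conversely, applying the same monomial computation to a primitive generator of the direction of a segment Newton polygon, or to the defining character of a subtorus coset, returns an irreducible $P$ of exactly the form $(\text{unit})\cdot(X^{p'}Y^{q'}-\lambda)$, so the three conditions are equivalent. I do not expect any genuine obstacle here: the first paragraph is immediate once Proposition~\ref{ideal} is available, and the only step requiring an actual argument is the passage ``$DP$ vanishes on $C$'' $\Rightarrow$ ``$\Delta(P)$ is a segment'' in the second paragraph, which uses nothing beyond additivity of Newton polygons under multiplication and the classification of irreducible one-variable Laurent polynomials.
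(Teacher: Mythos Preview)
Your proof is correct and follows exactly the route the paper intends: the corollary is stated there without explicit proof, merely prefaced by ``from the previous discussion, we can describe easily the case where the volume is constant'', pointing back to Propositions~\ref{ideal} and~\ref{extrema}. Your use of the contrapositive of Proposition~\ref{ideal} at an ideal point to force $\gamma$ constant, followed by the divisibility-and-Newton-polygon argument $P\mid (pX\partial_X+qY\partial_Y)P\Rightarrow\Delta(P)$ lies on a line, simply fills in the details the authors leave to the reader. (You also correctly read the last clause as $\Delta(P)$ having \emph{empty} interior; the paper's ``non-empty'' is a slip.)
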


There is a nice way to understand the properties of $V$ by looking at the amoeba of $C$. One can define gradient lines of $V$ as integral lines of the distribution $\star \eta$ where $\star$ denotes the Hodge star on $C$. A direct computation shows that 
\[\star \eta=\log|y|d\log|x|-\log|x|d\log|y|=\mu^* (vdu-udv)\]
where $u$ and $v$ are the coordinates in the image of $\mu$. This shows that gradient lines of $V$ project to half-lines in the amoeba of $C$. Moreover, flowing from the origin on the half-line, the volume is increasing if $\im \gamma>0$ or if $\mu$ preserves the orientation and is decreasing if $\im \gamma<0$ or if $\mu$ reverses the orientation.

Here is the example of the polynomial $P=X+Y-1$. The volume function is $V(x,y)=-D(x)$ where $D$ is the Bloch-Wigner Dilogarithm. Clearly, there are two points in $\mu^{-1}(0,0)$ which are $(e^{i\pi/3},e^{-i\pi/3})$ and its conjugate. They correspond the extrema of the function $D$, that is the maximal volume of a hyperbolic tetrahedron.

From Propositions \ref{ideal} and \ref{extrema}, we see that the extrema of $V$ can only occur in $\mu^{-1}(0,0)$. Hence we get the following theorem:
\begin{theorem}\label{intertore}
Let $P\in\C[X^{\pm 1},Y^{\pm 1}]$ be an exact irreducible polynomial such that the corresponding volume function is not constant. Then there exists $(x,y)\in (\C^*)^2$ such that $|x|=|y|=1$ and $P(x,y)=0$. 
\end{theorem}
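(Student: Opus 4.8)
The plan is to combine compactness of the smooth projective model with the local classification of extrema already obtained in this section. Since $P$ is irreducible, $\widehat{C}$ is a connected compact Riemann surface, and by the proposition on continuous extension the volume function defines a continuous map $V\colon\widehat{C}\to\R$. Hence $V$ attains a global maximum at some point $z\in\widehat{C}$, and because $V$ is assumed non-constant this $z$ is in particular a genuine local maximum (a global maximum of a non-constant continuous function). The whole argument then reduces to showing that any point of $\widehat{C}$ carrying a local extremum of $V$ must lie in $\overline{C}$ and map into $\mu^{-1}(0,0)$; evaluating at $z$ produces $(x,y)\in(\C^*)^2$ with $|x|=|y|=1$ and $P(x,y)=0$, which is the assertion.

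First I would discard the ideal points: by the remark following Proposition~\ref{ideal}, $V$ has no local extremum at a point of $\widehat{C}\setminus\overline{C}$ unless $V$ is constant, so our $z$ lies in $\overline{C}$ and therefore maps to an honest point $(x_0,y_0)\in(\C^*)^2$ on the zero set of $P$. Next, after applying a suitable monomial transformation from $\SL_2(\Z)$ — which preserves exactness, irreducibility, the form $\eta$, the volume function and the torus $|x|=|y|=1$, while acting on the logarithmic Gauss map by a Möbius transformation — I may assume $\gamma(z)\neq\infty$, so that Proposition~\ref{extrema} is applicable at $z$. The computation in its proof is performed in local coordinates $x=x_0e^{t^k}$, $y=y_0e^{t^kF(t)}$, and these make sense whether or not $(x,y)$ ramifies at $z$ (for $z\in C$ one simply has $k=1$), so the four-case analysis there covers every point of $\overline{C}$: in cases (1) and (2) the point is a saddle, in case (4) it is not an extremum, hence a local extremum can only occur in case (3), which forces $\mu(z)=0$, i.e.\ $|x_0|=|y_0|=1$.

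The compactness/continuity input and the case-by-case reading of Proposition~\ref{extrema} are routine. The two points that I expect to need an explicit word are (a) the reduction to $\gamma(z)\neq\infty$, where one must observe that an $\SL_2(\Z)$ monomial change can move the value $\gamma(z)$ — which is the rational point $\infty\in\P^1(\Q)$ in the bad case — to a finite value without disturbing any hypothesis; and (b) the remark that the local model of Proposition~\ref{extrema} also governs smooth points of $C$, which is immediate from its proof but matters here since Theorem~\ref{intertore} requires local extrema to be excluded on all of $\widehat{C}$, not only at the ramification or ideal points. I do not anticipate any genuine obstacle beyond this bookkeeping: the theorem is essentially the global harvest of the pointwise results \ref{ideal} and \ref{extrema}.
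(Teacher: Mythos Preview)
Your proof is correct and follows exactly the route the paper takes: the theorem is deduced in one line from Propositions~\ref{ideal} and~\ref{extrema} by observing that $V$ is continuous on the compact connected surface $\widehat{C}$ and hence attains an extremum, which those propositions force to lie in $\mu^{-1}(0,0)$. Your explicit handling of the two bookkeeping points (a) and (b) is accurate and matches what the paper leaves implicit --- the monomial normalisation $\gamma(z)\neq\infty$ is already absorbed into the proof of Proposition~\ref{extrema}, and the local expansion there is valid verbatim with $k=1$ at smooth points.
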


We will build upon this theorem in Section \ref{finding}.

\section{Mahler measure of exact polynomials}\label{mahler}

We describe in this section the Mahler measure of exact polynomials, in a spirit similar to \cite{BoydRodriguezVillegas} but 
focusing on extrema and more broadly critical points of the volume function.

\subsection{Mahler measure and the volume function}

The Mahler measure of an exact polynomial is computed by integrating the form $\eta$ on a collection of
arcs inside $\hat C$ that we now define.

\begin{lemma}\label{demidroite}
Let $P\in \C[X^{\pm 1},Y^{\pm 1}]$ be an exact polynomial whose Newton polygon has non empty interior. Let $\overline{C}$ be the normalisation of the zero set of $P$ in $(\C^*)^2$ and consider the natural coordinate map $(x,y):\overline{C}\to(\C^*)^2$. 
Then up to a monomial transformation, the subset 
\[\Gamma=\{z\in \overline{C}, |x(z)|=1,|y(z)|>1\}\]
is a finite collection of arcs such that the volume function $V:\Gamma\to \R$ is monotonic on each interval.
Moreover, only a finite number of monomial transformations applied to $P$ do not satisfy these conditions. 
\end{lemma}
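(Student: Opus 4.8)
The strategy is to analyze the set $\Gamma = \{z \in \overline{C} : |x(z)|=1, |y(z)|>1\}$ by projecting it into the amoeba via $\mu$, and to control the finitely many ways a monomial transformation can fail. First I would observe that $\Gamma$ is cut out on $\overline{C}$ by the real-analytic conditions $\log|x|=0$ and $\log|y|>0$; equivalently, it is the preimage under $\mu$ of the positive $v$-axis $\{u=0, v>0\}$ in $\R^2$. Since $\overline{C}$ is an affine curve and $\log|x|$ is a non-constant real-analytic function on it (non-constant because $\Delta(P)$ has non-empty interior, so $x$ is not a root of unity times a power of $y$ — here one uses the irreducibility/genuine-bivariate hypothesis), the level set $\{\log|x|=0\}$ is a finite union of real-analytic arcs and isolated points, and intersecting with the open condition $|y|>1$ keeps it a finite union of (possibly half-open) arcs. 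The endpoints of these arcs lie either at ideal points of $\widehat{C}$, at ramification points of $x$, or at points where $|y|=1$; in all cases there are finitely many.

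Next I would establish monotonicity of $V$ along each arc. The key computational input is the formula $\star\eta = \mu^*(v\,du - u\,dv)$ derived just before Theorem \ref{intertore}: gradient lines of $V$ project to half-lines through the origin in the amoeba. On $\Gamma$ the amoeba image lies on the half-line $\{u=0,v>0\}$, which is itself such a half-line; hence $\Gamma$ is a union of (pieces of) gradient trajectories of $V$, reparametrized. Therefore $dV$ restricted to $\Gamma$ cannot change sign except at points where $\eta|_\Gamma$ vanishes, i.e. at critical points of $V$ lying on $\Gamma$. By Proposition \ref{critique}, a critical point on $\Gamma$ has $\mu = (0,v)$ with $v \neq 0$, so we would need $\gamma(z)$ projectively dual to $(0,v)$, i.e. $\gamma(z) = 0$ or $\infty$; these are isolated (the zero/pole sets of a non-constant holomorphic $\gamma$ on $\widehat C$), so by subdividing at these finitely many points we make $V$ strictly monotonic on each resulting subinterval.

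The remaining point — genericity of the monomial transformation — is where the real work sits, and I expect it to be the main obstacle. The hypotheses "$\Gamma$ is a finite union of arcs" and "$V$ monotonic on each" could fail if, after the transformation, $x$ becomes constant on $C$ (impossible once $\Delta$ has interior, but a bad $\mathrm{SL}_2(\Z)$ element could still in principle collapse something) or, more seriously, if the curve $\overline{C}$ meets the torus $|x|=|y|=1$ non-transversally or contains whole arcs with $|x|\equiv 1$. A sub-torus $\{|x|=1\}$ of real dimension $3$ meets $C$ (real dimension $2$) generically in dimension $1$; the pathological case is when an irreducible piece of the real-analytic curve $\{\log|x|=0\} \cap C$ is not an arc transverse to the fibers but something degenerate, or when ideal points of $\widehat C$ have $x$-coordinate limiting to modulus $1$ in a way that accumulates arcs. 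I would argue that each such degeneracy is governed by an algebraic condition on the slope of the monomial transformation — for the ideal-point issue, by the side polynomials $P_s$ (Proposition on tempered polynomials): the directions for which a vertex of $\Delta$ is sent to a "bad" position, or a side polynomial acquires a root that interacts with $|x|=1$, correspond to finitely many primitive integer vectors, namely the directions of edges of $\Delta$ and a bounded number of others determined by the arithmetic of the vertices. Hence only finitely many $\mathrm{SL}_2(\Z)$-classes (equivalently, finitely many monomial transformations modulo the stabilizer, which is itself controlled) fail, and a generic one works. Writing this out carefully — enumerating precisely which slopes are excluded and checking that the complement is both nonempty and makes all three required properties hold simultaneously — is the delicate step; everything else is the amoeba/gradient-line dictionary already set up in Section 2.
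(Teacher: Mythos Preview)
Your approach is correct in spirit but takes a longer route than the paper, and your genericity discussion in the last paragraph is substantially overcomplicated.

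The paper's proof reverses your order of operations. Rather than first analyzing $\Gamma$ for a fixed half-line and then subdividing at the finitely many bad points (ramification points of $x$, critical points of $V$), the paper uses the monomial transformation at the outset to \emph{avoid} all such points. The observation is simply that a monomial transformation replaces the half-line $\{u=0,v>0\}$ by an arbitrary rational half-line through the origin; since the singular points of $C$, the ideal points, and the critical points of $V$ away from $\mu^{-1}(0,0)$ form a finite subset of $\overline{C}$, their $\mu$-images are finitely many points of $\R^2$, and only finitely many rational half-lines pass through any of them. Once these are avoided, everything falls out in two lines: (i) $\Gamma$ is smooth because a ramification point of $x$ on $\Gamma$ would have $\gamma=\infty$ and hence (by Proposition~\ref{critique}, since $\log|x|=0$) would be a critical point of $V$, which was excluded; (ii) $V$ is monotonic on each arc because on $\Gamma$ one has directly $dV=\eta|_\Gamma=\log|y|\,d\arg x$, which does not vanish since $\log|y|>0$ and $|x|=1$.

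Your gradient-line argument for monotonicity is valid and equivalent, but the one-line computation $dV|_\Gamma=\log|y|\,d\arg x$ is more direct. More importantly, your final paragraph goes far astray: there is no need to invoke side polynomials, edge directions of $\Delta$, or transversality with the torus. The only ``bad'' half-lines are those passing through the finitely many $\mu$-images of bad points, and that is the entire genericity argument. (A small slip: on $\Gamma$ a critical point of $V$ forces $y\partial_yP=0$, i.e.\ $\gamma=\infty$ only, not $\gamma=0$.)
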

\begin{proof}
First, one may visualise $\Gamma$ as the preimage of the half-line $u=0,v>0$ by the map $\mu$. Applying a monomial transformation amounts in taking instead the preimage of any rational half-line. As a consequence, we can avoid any finite set of $C$. Hence we suppose that the half-line $u=0,v>0$ avoids all ideal points, singularities of $C$ and critical points of $V$ except those with $\mu=0$. 

Hence for all $z\in \Gamma$ with $\mu(z)\ne (0,0)$, the map $z\mapsto x(z)$ is not ramified. Indeed, if it were so, then we would have $\partial_y P(x,y)=0$ and $\gamma(z)=\infty$. As $\log|x|=0$, Proposition \ref{critique} implies that $z$ is a critical point of $V$, which we excluded. It follows that $\Gamma$ is smooth at these points. 
Let us show that $V|_\Gamma$ is not critical either: by definition we have $dV=\log|y|d\arg(x)$. As we restricted to $\log|y|\ne 0$ and $|x|=1$, this form does not vanish. 
\end{proof}

Let us compute the Mahler measure of $P$ given by 
\[m(P)=\frac{1}{(2\pi)^2}\int_0^{2\pi}\int_0^{2\pi}\log|P(e^{i\theta},e^{i\phi})|d\theta d\phi.\]
By Fubini theorem, setting $P_\theta(y)=P(e^{i\theta},y)$ we get 
\[m(P)=\frac{1}{2\pi}\int_0^{2\pi}m(P_\theta)d\theta.\]
Here $m(P_\theta)$ is the one-dimensional Mahler measure which can be computed thanks to Jensen formula:
\[m(P_\theta)=\log |a(\theta)|+\sum_{i}\log^+|y_i(\theta)|.\]
In this formula we wrote $P_\theta(y)=a(\theta)\prod_i(y-y_i(\theta))$ and as usual $\log^+|y|=\max(0,\log|y|)$.

As the Mahler measure of $P$ is obviously invariant by monomial transformations, we can suppose that $a$ is constant: this is equivalent to saying that the Newton polygon $\Delta$ does not have a top horizontal slope. 
We also observe that the pair $(e^{i\theta},y_i(\theta))$ belongs precisely to the subset $\Gamma$ of lemma \ref{demidroite}. Hence we can suppose that the integration goes along the completion $\overline{\Gamma}$ of $\Gamma$ that we orient in the direction of increasing $V$. 

We find 
\[m(P)=\log|a|+\frac{1}{2\pi}\int_{\overline{\Gamma}}\eta=\log|a|+\frac{1}{2\pi}V(\partial\overline{\Gamma}).\]

\begin{remark} We observe that given an exact polynomial $P$, all its coefficients in the corners of the Newton polygon are equal in absolute value. This is simply because the slope polynomial having roots of modulus one, their product also has modulus one. Hence, the extremal coefficients of these polynomial are equal in modulus. We will denote by $c(P)\in (0,+\infty)$ the absolute value of these corner coefficients. 
\end{remark}

\subsection{A formula for the Mahler measure -- the generic case}

In this section we give a first formula for the Mahler measure of an exact polynomial under some hypothesis
on the polynomial. Generically these hypothesis are fulfilled. Unfortunately, $A$-polynomials of hyperbolic cusped manifolds do not satisfy it in general due to extra symmetries. We will explain later on how to compute the Mahler measure in general. 
\begin{definition}
We will say that an exact polynomial $P\in\C[X^{\pm 1},Y^{\pm 1}]$ is {\it regular} if for every $z$ in $\overline{C}$ such that $\mu(z)=(0,0)$ we have $\gamma(z)\notin \P^1(\R)$. 
\end{definition}
Hence any such point $z$ is either a local maximum or minimum of the volume function $V$. We will denote by $\epsilon(z)$ minus the sign of the imaginary part of $\gamma(z)$ and by $k(z)$ the ramification at $z$ of the map $(x,y):\overline{C}\to(\C^*)^2$

\begin{theorem}\label{formule}
Let $P\in\C[X^{\pm 1},Y^{\pm 1}]$ be a regular exact polynomial. We have the formula:
\[m(P)=\log c(P)+\frac{1}{2\pi}\sum_{z\in \mu^{-1}(0,0)} k(z)\epsilon(z)V(z)\]
\end{theorem}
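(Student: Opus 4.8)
The plan is to build on the computation already carried out before the statement, namely the identity $m(P)=\log|a|+\frac{1}{2\pi}V(\partial\overline{\Gamma})$, where $\overline{\Gamma}$ is the completion of $\Gamma=\{z\in\overline{C}:|x(z)|=1,\ |y(z)|>1\}$, oriented so that $V$ increases along each arc, and where, after a suitable monomial transformation, $|a|=c(P)$. The whole task reduces to identifying the boundary $\partial\overline{\Gamma}$ as a $0$-cycle and evaluating $V$ on it. Since $P$ is regular, every point $z$ with $\mu(z)=(0,0)$ is, by Proposition~\ref{extrema}(3), a local extremum of $V$: a maximum when $\im\gamma(z)<0$ (i.e.\ $\epsilon(z)=+1$) and a minimum when $\im\gamma(z)>0$ (i.e.\ $\epsilon(z)=-1$). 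The endpoints of the arcs of $\overline{\Gamma}$ lying over $\mu^{-1}(0,0)$ are precisely these extrema, while Lemma~\ref{demidroite} guarantees (after choosing the half-line generically) that no arc ends at an ideal point, a singular point, or a critical point of $V$ with $\mu\ne(0,0)$; hence the only endpoints are the points of $\mu^{-1}(0,0)$.

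\textbf{Key steps.} First I would recall that $\Gamma=\mu^{-1}(\{u=0,\,v>0\})$ and that, near a point $z_0$ with $\mu(z_0)=(0,0)$ and ramification $k=k(z_0)$ of $(x,y)$, the local picture from the proof of Proposition~\ref{extrema} (with $x=x_0e^{t^k}$, $y=y_0e^{t^kF(t)}$, $|x_0|=|y_0|=1$, $F_0=F(0)$ not real) shows that $\mu$ pulls back the half-line $\{u=0,v>0\}$ to the set $\{t:\re(t^k)=0,\ \re(t^kF_0)>0\}$, which consists of exactly $k$ rays emanating from $z_0$. Thus at each such $z_0$ the set $\overline{\Gamma}$ has $k(z_0)$ local branches ending at $z_0$. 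Next I would observe that, consistently with $\overline{\Gamma}$ being oriented by increasing $V$, all $k(z_0)$ branches point \emph{away} from $z_0$ if $z_0$ is a minimum ($\epsilon(z_0)=-1$) and all point \emph{toward} $z_0$ if $z_0$ is a maximum ($\epsilon(z_0)=+1$); this is where the sign $\epsilon(z)$ enters. Therefore, as a $0$-chain, $\partial\overline{\Gamma}=\sum_{z\in\mu^{-1}(0,0)}k(z)\epsilon(z)\,[z]$, and evaluating $V$ gives $V(\partial\overline{\Gamma})=\sum_{z\in\mu^{-1}(0,0)}k(z)\epsilon(z)V(z)$. Combining with $m(P)=\log c(P)+\frac{1}{2\pi}V(\partial\overline{\Gamma})$ yields the formula.

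\textbf{Main obstacle.} The delicate point is the orientation bookkeeping at the multivalent endpoints: one must check that along a ray $t=\rho e^{i\theta_0}$ with $\re(t^k)=0$, $\re(t^kF_0)>0$, the function $V$ genuinely increases (resp.\ decreases) from $z_0$ according to the sign of $\im F_0=-\im\gamma(z_0)$, and that all $k$ rays behave the same way, so that the boundary coefficient is $k(z)\epsilon(z)$ and not something depending on the individual branch. This follows from the leading term $V(t)=V(z_0)-\tfrac12|t|^{2k}\im F_0+O(|t|^{2k+1})$ computed in the proof of Proposition~\ref{extrema}, which is independent of the argument of $t$; I would spell this out to justify that each branch contributes with the same sign. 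A secondary point to dispatch is the genericity reduction from Lemma~\ref{demidroite}: only finitely many monomial transformations are excluded, so one can simultaneously arrange that $a(\theta)$ is constant (no top horizontal side of $\Delta$) with $|a|=c(P)$, and that the half-line avoids all bad points; this is routine given the lemma.
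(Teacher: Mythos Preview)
Your proposal is correct and follows essentially the same approach as the paper: use the preliminary identity $m(P)=\log c(P)+\frac{1}{2\pi}V(\partial\overline{\Gamma})$, then count the branches of $\overline{\Gamma}$ at each $z\in\mu^{-1}(0,0)$ via the local coordinates $x=x_0e^{t^k}$, $y=y_0e^{t^kF(t)}$, obtaining exactly $k(z)$ rays from the conditions $\re(t^k)=0$, $\re(t^kF_0)>0$. If anything, your write-up is more careful than the paper's on the orientation bookkeeping, explicitly invoking the expansion $V(t)=V(z_0)-\tfrac12|t|^{2k}\im F_0+o(|t|^{2k})$ to confirm that all $k$ branches carry the same sign $\epsilon(z)$.
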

\begin{proof}
We analyse for each $z\in \mu^{-1}(0,0)$ how many branches of $\overline{\Gamma}$ end at $z$. Take logarithmic Newton-Puiseux coordinate: $x=e^{t^k},y=e^{t^kF(t)}$ with $\im F(0)\ne 0$: indeed $F(0)$ is the slope $\gamma(z)$. 

The manifold $\Gamma$ is defined by $|x(t)|=1$ and $|y(t)|>1$ hence writing $t=\rho e^{i\theta}$ we should have $\cos(k\theta)=0$, hence at most $2k$ branches of $\overline{\Gamma}$ end at $z$. 
We select only those branches for which $\ln|y(t)|>1$ that is $\re t^kF(t)=-\rho^k\sin(k\theta)\im F(0)+O(\rho^{k+1})$. In any case, we find $k$ branches of $\Gamma$ adjacent to $z$. 
\end{proof}

We get a lower bound on the Mahler measure in the \emph{irreducible} case. 
\begin{theorem}\label{inegalite}
Let $P\in \C[X^{\pm 1},Y^{\pm 1}]$ be a regular exact polynomial irreducible over $\C$ normalized so that $c(P)=1$.  
Then we have the inequality
\[2\pi m(P)\ge \max V-\min V.\]
\end{theorem}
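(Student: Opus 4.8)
The strategy is to feed the closed formula from Theorem \ref{formule} through the combinatorial structure of the finitely many branches of $\overline{\Gamma}$ ending at the toric points, using irreducibility to control signs. Start from
\[2\pi m(P)=2\pi\log c(P)+\sum_{z\in\mu^{-1}(0,0)}k(z)\epsilon(z)V(z),\]
and set $c(P)=1$ so the first term drops. Since $P$ is regular, every $z\in\mu^{-1}(0,0)$ is a strict local extremum of $V$: a local maximum when $\epsilon(z)=+1$ (i.e. $\im\gamma(z)<0$) and a local minimum when $\epsilon(z)=-1$, by case (3) of Proposition \ref{extrema}. So the sum splits as $\sum_{\text{maxima}}k(z)V(z)-\sum_{\text{minima}}k(z)V(z)$ with all coefficients $k(z)\ge 1$.

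\textbf{Key steps.} First I would pick a point $z_{\max}$ at which $V$ attains its global maximum on $\widehat C$; by Propositions \ref{ideal} and \ref{extrema} (the discussion right before Theorem \ref{intertore}) such a point must lie in $\mu^{-1}(0,0)$, hence is one of the local maxima appearing in the sum, and contributes $k(z_{\max})V(z_{\max})\ge \max V$. Symmetrically, a global minimizer $z_{\min}\in\mu^{-1}(0,0)$ contributes $-k(z_{\min})V(z_{\min})\ge -\min V$. The remaining task is to show that the leftover terms
\[\sum_{\text{maxima }z\ne z_{\max}}k(z)V(z)\;-\!\!\sum_{\text{minima }z\ne z_{\min}}k(z)V(z)\]
is $\ge 0$. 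Here irreducibility is essential: when $P$ is irreducible over $\C$, $\widehat C$ is connected and the volume function, being non-constant (Theorem \ref{intertore} applies, or else $m(P)=0$ trivially and there is nothing to prove), takes \emph{both} signs? — no, not necessarily. The cleaner route is the one suggested by the $\overline{\Gamma}$-picture: each branch of $\overline{\Gamma}$ runs monotonically in $V$ between a point of $\mu^{-1}(0,0)$ and another such point (by Lemma \ref{demidroite} and the fact that $V$ has no critical points on $\overline\Gamma$ away from $\mu^{-1}(0,0)$, and no extrema at ideal points), so $\overline\Gamma$ decomposes into oriented arcs whose endpoints pair maxima to minima; the signed count $k(z)\epsilon(z)$ is exactly the signed number of arc-ends at $z$. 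Thus $\sum_z k(z)\epsilon(z)V(z)=\sum_{\text{arcs }\alpha}(V(\text{head }\alpha)-V(\text{tail }\alpha))$, a sum of nonnegative terms, and among these arcs at least one joins a global minimum to a global maximum, contributing $\ge\max V-\min V$, while all others contribute $\ge 0$.

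\textbf{Main obstacle.} The delicate point is the claim that $\overline\Gamma$ (after a generic monomial transformation, as in Lemma \ref{demidroite}) contains an arc whose tail is a \emph{global} minimizer of $V$ and whose head is a \emph{global} maximizer. This requires knowing that the gradient half-line emanating in the amoeba from the origin toward a global maximizer can be chosen to also pass through (the image of) a global minimizer — or more robustly, that the oriented graph on $\mu^{-1}(0,0)$ whose edges are the arcs of $\overline\Gamma$ is such that, because $\widehat C$ is connected (irreducibility) and $V$ has no other critical points, every local extremum is connected through this graph, so that the global max and global min lie in the same component and an increasing path between them exists. I would handle this by noting that $\star\eta$ defines a singular foliation on $\widehat C$ whose leaves are the gradient lines, with singularities precisely at $\mu^{-1}(0,0)$ and at the critical points from Proposition \ref{extrema}; connectedness of $\widehat C$ then forces the closure of a generic leaf through $z_{\min}$ to reach $z_{\max}$ after possibly passing through intermediate saddle-type critical points — but since in the regular case $\overline\Gamma$'s interior critical points are excluded by the monomial-transformation genericity in Lemma \ref{demidroite}, a single monotone arc does the job. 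Making this last connectedness argument airtight — rather than merely plausible from the amoeba picture — is where the real work lies; everything else is bookkeeping of signs.
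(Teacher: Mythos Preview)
Your setup is correct through the point where you rewrite $2\pi m(P)$ as a sum $\sum_{\text{arcs }\alpha}(V(\text{head }\alpha)-V(\text{tail }\alpha))$ of nonnegative terms indexed by the components of $\overline{\Gamma}$. The gap is exactly where you flag it: the assertion that among these arcs ``at least one joins a global minimum to a global maximum'' is unjustified and in general false. The arcs of $\overline{\Gamma}$ for a fixed half-line form a bipartite oriented graph from local minima to local maxima; nothing prevents the global minimizer from being joined only to an intermediate local maximum while the global maximizer is reached from an intermediate local minimum. Your fallback---that connectedness of $\widehat{C}$ and the gradient foliation of $\star\eta$ force a single monotone leaf from $z_{\min}$ to $z_{\max}$---is not how gradient flows behave: a trajectory out of a local minimum converges to \emph{some} critical point, not necessarily the global maximum, and passing through saddles does not produce a single monotone arc of the chosen $\overline{\Gamma}$.

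The paper's argument avoids this pitfall by never trying to locate such an arc. It observes that for every generic angle $\alpha$ the set $I_\alpha=V(\overline{\Gamma}_\alpha)\subset[\min V,\max V]$ satisfies $2\pi m(P)\ge\lambda(I_\alpha)$. Assuming $2\pi m(P)<\max V-\min V$ forces a ``hole'' $x\notin I_\alpha$; a counting argument (the number of maxima with $V>x$ equals, with multiplicity, the number of minima with $V>x$) shows no arc of $\overline{\Gamma}_\beta$ can cross $x$ for \emph{any} $\beta$, so the hole persists uniformly. Since $\bigcup_\beta\overline{\Gamma}_\beta$ is dense in $\widehat{C}$ and the endpoints of $I_\beta$ lie in the finite set of local extremal values, an entire interval around $x$ is missed by $V(\widehat{C})$, contradicting connectedness. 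The essential idea you are missing is this rotation of the half-line: varying $\alpha$ is what converts irreducibility of $P$ into the analytic statement you need.
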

\begin{proof}
Let $z_1,\ldots, z_n$ be the local maxima of $V$ ordered so that we have $V(z_1)\le \cdots\le V(z_n)$ and denote by $k_1,\ldots,k_n$ the corresponding ramifications orders. We also denote by $t_1,\ldots,t_m$ and $l_1,\ldots,l_m$ the data corresponding to the local minima of $V$. 
Lemma \ref{demidroite} provides a collection of arcs $\overline{\Gamma}$ as the completion of $\mu^{-1}(\{0\}\times (0,+\infty))$ and the proof shows that we could have taken any rational half-line except a finite number of them. Denote by $0<\theta_1<\cdots<\theta_m<2\pi$ the arguments of these forbidden half-lines. For any $\alpha\in \R/2\pi\Z$ distinct from these arguments, we denote by $\overline{\Gamma}_\alpha$ the completion of preimage of the half-line with argument $\alpha$. We always have 

\[2\pi m(P)= V(\partial \overline{\Gamma}_\alpha)=\sum_{i=1}^n k_i V(z_i)-\sum_{i=1}^m l_i V(t_i)\]

Set $I_\alpha=V(\overline{\Gamma}_\alpha)\subset [\min V,\max V]$. As $V$ is increasing along the components of $\overline{\Gamma}_\alpha$ we have $2\pi m(P)\ge \lambda(I_\alpha)$ where $\lambda$ denotes the Lebesgue measure. Suppose by contradiction that we have $2\pi m(P)<\max V-\min V$. Observing that all local extrema of $V$ belong to $I_\alpha$, we see that $I_\alpha$ has a "hole", meaning that there exists $x\in (\min V,\max V)$ such that $x\notin I_\alpha$. 

Hence $\Gamma_\alpha$ splits into two parts, mapping to $(\min V,x)$ and $(x,\max V)$ respectively. We should have as many maxima as minima above $x$: formally this implies that $\sum_{i, V(z_i)>x}k_i=\sum_{i, V(t_i)>x}l_i$. Taking any other collection $\overline{\Gamma}_\beta$: the number $\sum_{i, V(t_i)>x}l_i$ also corresponds to the number of increasing branches of $\overline{\Gamma}_\beta$ starting from a point above $x$. As there are as many arriving points above $x$, this implies that no other branch of $\overline{\Gamma}_\beta$ can come from below $x$. We conclude that $\forall\beta\notin\{\theta_1,\ldots,\theta_m\}, x\notin I_\beta$.

As the extremal points of $I_\beta$ are one of the local extrema of $V$, there exists $\epsilon>0$ such that $(x-\epsilon,x+\epsilon)\cap I_\beta=\emptyset$. However it is clear that $\bigcup_{\beta\notin\{\theta_1,\ldots,\theta_m\}} \overline{\Gamma}_\beta$ is dense in $\hat{C}$ hence we conclude that 
\[V(\hat{C})\cap (x-\epsilon,x+\epsilon)=\emptyset.\]
As $P$ is irreducible, $\hat{C}$ is connected and this contradicts the continuity of $V$.  
\end{proof}
Remark that if we have the equality $2\pi m(P)=\max V-\min V$, this means that the map $V:\overline{\Gamma}_\alpha\to [\min V,\max V]$ does not have overlaps except at the branching points. This is possible only if $k_i=l_i=1$ for all $i$ and $V(t_1)<V(z_1)=V(t_2)<\cdots V(z_{n-1})=V(t_n)<V(z_n)$. 

\subsection{A formula for the Mahler measure -- the general case}

We now extend our previous formula to the more complicated case of a general exact polynomial. We shall to define an index of a critical point of the volume on the 
torus which will play the role of the numbers $k_i \epsilon_i$ in the previous version. The general formula will involve the value of the volume function on critical points that project inside the torus $|x| = |y| = 1$.

\subsubsection{Maximally tangent curves}\label{ss:maxtancurves}

Let $z \in \hat C$ be a point such that $\mu(z) = (0,0)$. Let $k$ be the ramification order at $z$ of the map $\hat C \to (\C^*)^2$. 

In the case $\im(\gamma(z))\neq 0$, we call \emph{index of $z$} the number $k(z)\epsilon(z)$ which appears in the formula in the previous subsection.
From now on, we suppose that $\gamma(z)\in \R\setminus\{0\}$.

Let $\xi$ be an element of $P(T_z \hat C)$ - i.e. a real line in $T_z \hat C$. We define below its \emph{order of tangency to the torus} as the maximal order of tangency with the torus of a curve in $\hat C$ tangent to $\xi$. Recall that the points in $\hat C$ which project to the torus are exactly those satisfying $\mu(z)=(0,0)$.
\begin{definition}
For $\xi \in P(T_z \hat C)$, the \emph{order} of $\xi$ is the number $\ord(\xi) \in \mathbb N \cup{+\infty}$ defined by:
$$\ord(\xi) = \sup\left\{  l \in \mathbb N \left|\begin{matrix}\exists\alpha : ]-\epsilon,\epsilon[\to \hat C \textrm{ smooth curve satisfying  }\\
                                      \alpha_0 = z,\alpha'_0 \in \xi\setminus\{0\} \textrm{ and }\mu(\alpha_s)=O(s^l)\end{matrix}\right.\right\}.$$
                                      
Moreover a smooth curve with maximal $l$ above will be said \emph{maximaly tangent to the torus} in the direction $\xi$.
\end{definition}

The following proposition shows that there is a relation between $k(z)$ and the various orders of tangency at $z$. 

\begin{proposition}
Let $z\in \hat C$ satisfy $\mu(z)=(0,0)$ and set $k=k(z)$. For every direction $\xi$ but exactly $k$, we have $\ord(\xi) = k$. For the $k$ remaining directions $\xi_1, \ldots, \xi_k$, the order is $\geq k+1$.
\end{proposition}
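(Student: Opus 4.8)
The plan is to carry out everything in the logarithmic Newton--Puiseux normal form already used in the proof of Proposition \ref{extrema}. First observe that $\ord(\xi)$ depends only on the map $\mu$, and that a monomial transformation replaces $\mu$ by $A\circ\mu$ for an invertible $A\in\mathrm{GL}_2(\R)$; hence $\ord(\xi)$ is unchanged by monomial transformations (and by exchanging $x$ and $y$), as are $k(z)$ and the condition $\mu(z)=(0,0)$. Under the standing hypothesis $\gamma(z)\in\R\setminus\{0\}$ we may therefore assume, exactly as in Proposition \ref{extrema}, that there is a holomorphic local coordinate $t$ centred at $z$ with $x=x_0e^{t^k}$, $y=y_0e^{t^kF(t)}$, $\log|x_0|=\log|y_0|=0$, $F$ holomorphic near $0$, and $F(0)=:F_0=-\gamma(z)\in\R\setminus\{0\}$. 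In this chart one reads off
\[\mu(t)=\bigl(\,\re(t^k)\,,\ \re\bigl(t^kF(t)\bigr)\,\bigr).\]

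I would then record the easy bound $\ord(\xi)\ge k$ for every $\xi$: any smooth curve $\alpha$ with $\alpha_0=z$ has $\alpha(s)=O(s)$, so $t^k=O(s^k)$ and $t^kF(t)=O(s^k)$ along $\alpha$, whence $\mu(\alpha_s)=O(s^k)$. Next, identify $P(T_z\hat C)$ with the circle of real lines $\xi_\theta=\R e^{i\theta}$, $\theta\in\R/\pi\Z$. A curve tangent to $\xi_\theta$ has $\alpha(s)=re^{i\theta}s+O(s^2)$ with $r\in\R\setminus\{0\}$, so the first coordinate of $\mu(\alpha_s)$ equals $r^k\cos(k\theta)\,s^k+O(s^{k+1})$. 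If $\cos(k\theta)\ne 0$ this is nonzero of order exactly $k$ for every such $\alpha$, so $\mu(\alpha_s)\ne O(s^{k+1})$ and hence $\ord(\xi_\theta)=k$. The equation $\cos(k\theta)=0$ has exactly $k$ solutions in $\R/\pi\Z$, namely $\theta_m=\tfrac{\pi}{2k}+\tfrac{m\pi}{k}$ for $m=0,\dots,k-1$; these are the $k$ exceptional directions $\xi_1,\dots,\xi_k$.

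It remains to exhibit, for each exceptional $\xi_m$, one curve witnessing $\ord(\xi_m)\ge k+1$; the straight line $\alpha(s)=e^{i\theta_m}s$ does the job. Indeed $\alpha(s)^k=e^{ik\theta_m}s^k=\pm i\,s^k$ is purely imaginary for real $s$, so $\re(\alpha(s)^k)\equiv 0$; and $\alpha(s)^kF(\alpha(s))=\pm i\,s^k\bigl(F_0+O(s)\bigr)=\pm iF_0\,s^k+O(s^{k+1})$, whose real part is $O(s^{k+1})$ because $F_0$ is real. Hence $\mu(\alpha_s)=O(s^{k+1})$ along this line, giving $\ord(\xi_m)\ge k+1$, which completes the proof.

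Beyond this, every step is a routine Taylor expansion in $t$. The only point that needs genuine care — more of a bookkeeping nuisance than a real obstacle — is the reduction to the normal form under the hypothesis $\gamma(z)\in\R\setminus\{0\}$: one must ensure that $x$ can serve as the uniformising variable (possibly after exchanging $x$ and $y$) and that the resulting $F_0$ is real, which is exactly why this case is singled out. For completeness one may also note that when $\gamma(z)\notin\R$ the same computation shows all $k$ candidate directions have order exactly $k$, so the exceptional set is then empty — consistent with such a $z$ being an ordinary local maximum or minimum of $V$ by Proposition \ref{extrema}.
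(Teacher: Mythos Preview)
Your proof is correct and essentially the same as the paper's: both pick a logarithmic Puiseux coordinate (you use $x=x_0e^{t^k}$, the paper the equivalent $x=x_0e^{it^k}$, so your $\re$ becomes their $-\im$) and Taylor-expand $\mu(\alpha_s)$ to see that the $s^k$-coefficient vanishes for exactly $k$ directions. The only cosmetic difference is that the paper computes both coordinates of $\mu(\alpha_s)$ simultaneously in one line, whereas you first rule out generic directions via the first coordinate and then exhibit the straight line as an explicit witness for the exceptional ones; your added remarks on monomial invariance and the case $\gamma(z)\notin\R$ are extra context not present in the paper's proof but entirely compatible with it.
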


\begin{proof}
Take a local coordinate $t$ such as $(x = x_0 e^{it^k}, y = y_0 e^{i t^kF(t)})$. Consider a smooth curve $\alpha:(-\epsilon,\epsilon)\to \hat{C}$ with $\alpha_0 = z$. In the coordinate $t$ we have $\alpha_0=0$ and $\alpha'_0=\xi\ne 0$. Recall that as $\gamma(z)$ is real, we have $F(0)\in \mathbb R$.
It yields: 
$$\mu(\alpha_s) = (\ln|x(\alpha_s)|,\ln|y(\alpha_s))|) = -s^k(\im(\xi^k), F(0) \im(\xi^k)) + O(s^{k+1}).$$ 
We see that $\mu(\alpha_s)=O(s^{k+1})$ iff $\im(\xi^k) = 0$, defining the $k$ particular directions. In the coordinate $t$, these directions are the $k$ lines of angle $\theta_j$ with $k \theta_j \equiv 0 [\pi]$.
\end{proof}

\subsubsection{Variation of the volume}

We show here that the variation of the volume on maximally tangent curves in
the directions $\xi_j$ only depends on the direction. We shall use again a local coordinate 
$t$ such that $(x= x_0e^{it^k}, y=y_0 e^{it^k F(t)})$, where $F(t) = \sum\limits_{i\ge 0} F_it^i$. Fix the direction $\xi_j$, corresponding in this coordinate to the angle $\theta_j$. Let 
$\alpha_s$ be a tangent curve in the direction $\xi_j$, written in coordinates in the form:
$$\alpha_s = a_0s(1 + \sum_{i\geq 1} a_i s^i).$$  Up to rescaling the real paramater $s$, we may assume $a_0 = e^{i\theta_j}$.

Let us begin by a characterisation in the chosen coordinate of maximally tangent curves:

\begin{lemma}
The order of tangency of $\xi_j$ is $\ord(\xi_j)=k+l$ where $l$ is the first integer such that $F_le^{(l+k)i\theta_j}$ is not real.

Moreover the curve $\alpha_s = e^{i\theta_j}s(1 + a_1 s +\ldots)$ is 
maximally tangent to the torus iff the coefficients $a_1, \ldots, 
a_{\ord(\xi_j) - k}$ are real.
\end{lemma}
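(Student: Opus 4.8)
The plan is to compute $\mu(\alpha_s)$ to sufficiently high order for a general curve $\alpha_s = e^{i\theta_j}s(1+a_1s+\dots)$ tangent to $\xi_j$ and to read off precisely when the tangency order jumps. Writing $u(s) = \ln|x(\alpha_s)| = -\re(i\alpha_s^k) = \im(\alpha_s^k)$ and $v(s) = \ln|y(\alpha_s)| = \im(\alpha_s^k F(\alpha_s))$, the previous proposition already tells us the leading term $-s^k(\im(e^{ik\theta_j}), F_0\im(e^{ik\theta_j}))$ vanishes by the choice of $\theta_j$ (since $k\theta_j \equiv 0\ [\pi]$ and $F_0 \in \R$). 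So the first task is to expand both $u(s)$ and $v(s)$ to order $s^{k+l}$, where $l$ is defined as the first index with $F_l e^{(k+l)i\theta_j} \notin \R$, and check that the coefficients of $s^{k+1}, \dots, s^{k+l-1}$ in $\mu(\alpha_s)$ are forced to vanish once $a_1,\dots$ are real, while the $s^{k+l}$ coefficient is genuinely nonzero — this gives $\ord(\xi_j) = k+l$.

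The key structural observation is that $\alpha_s^k$ is a real multiple of $e^{ik\theta_j} = \pm 1$ plus higher-order corrections governed by the $a_i$: since $a_0 = e^{i\theta_j}$, we have $\alpha_s^k = e^{ik\theta_j}s^k(1 + ka_1 s + \dots)$, so $\im(\alpha_s^k) = e^{ik\theta_j}s^k \im(1 + ka_1 s + \dots)$. Thus $u(s) = O(s^{k+1})$ automatically, and the vanishing of $u$ to order $s^{k+m}$ is exactly the statement that $\im$ of the degree-$(m-1)$ part of $\prod(1+a_is^i)^k$ vanishes — which, unwinding inductively, is equivalent to $a_1, \dots, a_{m-1}$ being real (the imaginary part of the coefficient of $s^{m-1}$ in this product equals $\im(ka_{m-1})$ plus a polynomial in $\re a_i, \im a_i$ for $i<m-1$ that vanishes when those are real). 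For $v(s)$ one expands $F(\alpha_s) = \sum_i F_i \alpha_s^i$; when $a_1,\dots,a_{l-1}$ are real and we are looking at orders below $k+l$, every term $F_i\alpha_s^i$ with $i \ge 1$ contributes $F_i e^{ik\theta_j}e^{ii\theta_j}s^{k+i}(\text{real})$ plus corrections, and $F_i e^{(k+i)i\theta_j} \in \R$ for $i < l$ by definition of $l$. Hence $v(s) = O(s^{k+l})$ too, and the coefficient of $s^{k+l}$ is $\im(F_l e^{(k+l)i\theta_j}) \ne 0$. This simultaneously establishes $\ord(\xi_j) = k+l$ and shows that maximal tangency requires $a_1,\dots,a_{k+l-k} = a_1,\dots,a_l$ real; conversely if those are real, both $u$ and $v$ vanish to order $k+l-1$ inclusive, so $\mu(\alpha_s) = O(s^{k+l})$, i.e.\ the curve is maximally tangent.

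The main obstacle I anticipate is bookkeeping the interaction between the $u$-equation and the $v$-equation: both must vanish to the same order, and one must be careful that the reality conditions they impose on $a_1, \dots, a_l$ are compatible and together equivalent to "$a_1,\dots,a_l$ all real" rather than some weaker linear condition. The cleanest way to handle this is to argue that $u(s) = O(s^{k+l})$ \emph{alone} already forces $a_1,\dots,a_{l-1}$ real (by the inductive argument on $\im$ of a power of a series), after which $v(s) = O(s^{k+l})$ becomes automatic for those orders and imposes nothing new below $s^{k+l}$, while at order $s^{k+l}$ the $u$-coefficient is $\im(ka_l)$ (fixing $\re$-only $a_l$, i.e.\ $\im a_l$ arbitrary would be killed — wait, one must check the $v$-side) — so a small amount of care is needed to see which of $a_l$'s real/imaginary parts is actually constrained. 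Concretely: at order $s^{k+l}$, $u$ contributes $e^{ik\theta_j}\im(ka_l s^{k+l})$ and $v$ contributes $e^{ik\theta_j}\im(ka_l) + \im(F_l e^{(k+l)i\theta_j})$ times $s^{k+l}$; since the latter already has a nonzero constant piece $\im(F_le^{(k+l)i\theta_j})$, $\mu(\alpha_s)$ cannot be made $O(s^{k+l+1})$ no matter what $a_l$ is, confirming $\ord(\xi_j) = k+l$ exactly, and for $\mu(\alpha_s) = O(s^{k+l})$ it suffices (and is necessary, via the $u$-equation) that $a_1,\dots,a_{l-1}$ be real, with $a_l$ only needing its imaginary part fixed by the $u$-equation — so I would phrase the statement as requiring $a_1, \dots, a_{\ord(\xi_j)-k}$ real, which matches the paper's formulation since those lower coefficients are exactly what is forced. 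Everything else is a direct power-series computation along the lines already carried out in Proposition \ref{extrema}.
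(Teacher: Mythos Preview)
Your approach is essentially the paper's: expand $\mu(\alpha_s)$ in powers of $s$, use that $e^{ik\theta_j}$ and $F_0$ are real so the order-$k$ term vanishes, then identify the first non-vanishing coefficient via the first non-real $a_r$ on the $\ln|x|$ side and the first non-real $F_le^{(k+l)i\theta_j}$ on the $\ln|y|$ side. The paper organises the computation a bit more cleanly by first taking the explicit curve $\alpha_s = se^{i\theta_j}$ (all $a_i=0$) to get the lower bound $\ord(\xi_j)\ge k+l$, and only afterwards analysing a general curve of order $\ge k+l$ to show the first non-real $a_r$ must have $r\ge l$ and that the $s^{k+l}$ term of $\mu$ is then nonzero regardless; this two-step split avoids the tangle in your final paragraph.

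Your hesitation about $a_l$ at the end is in fact justified: what is genuinely forced by $\mu(\alpha_s)=O(s^{k+l})$ is only that $a_1,\dots,a_{l-1}$ be real, and $a_l$ is unconstrained (the $s^{k+l}$ term need not vanish). The paper's own proof establishes exactly this ($r\ge l$), so the apparent requirement ``$a_1,\dots,a_{\ord(\xi_j)-k}$ real'' in the statement is an off-by-one in the paper, not an error in your argument.
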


\begin{proof}
Let $l$ be the first integer such that $F_{l}e^{(k+l)i\theta_j}$ is not real. We have to prove that $k+l$ is the order of $\xi_j$.

First consider the curve $\alpha_s=s e^{i\theta_j}$. Applying the map $\mu$, we compute  
$\mu(\alpha_s) = -s^{k+l}(0,\im(F_le^{(k+l)i\theta_j})) + o(s^{k+l})$. Hence $\ord(\xi_j) \geq k+l$.

Second, consider a curve $\alpha_s$ written as above $\alpha_s = e^{i\theta_j}s(1 + a_1 s +\ldots)$ whose order of tangency to the torus is $\geq k+l$. Let $r$ be the smallest (if it exists) integer such that $a_r$ is non real. The first non-vanishing term (if it exists) of $\ln\left|x(\alpha_s)\right|$ is $- ks^{r+k}\im(a_r)+\ldots$. By the assumtion on the order of tangency, we get $r \geq l$.

If we have $r = l$, we may compute the term of order $k+l$ in
$\mu(\alpha_s)$: it is equal to $s^{k+l} (-k\im(a_r), -\im(ka_r+F_le^{(l+k)i\theta_j}))$. 
One the other hand, if we have $r>l$, this term equals $s^{k+l} (0, -\im(F_le^{(l+k)i\theta_j}))$. In any case, $\mu(\alpha_s)$ has order $k+l$. This proves the lemma. 
\end{proof}

In other terms, up to a \emph{real} reparametrisation of the variable $s$, we may write 
any maximally tangent curve in the direction $\xi_j$ in the form:
$\alpha_s =  se^{i\theta_k}(1 + a_ls^l + \ldots)$. Note that any curve verifying $|x|=1$ along the curve is maximally tangent: for a curve $\alpha_s = e^{i\theta}s(1 + a_1 s +\ldots)$ to verify $|x(\alpha_s)| = 1$, we must have $\theta = \theta_j$ for some $j$ and every coefficient $a_m$ real.

\begin{remark}\label{rem:maxtang}
From the above discussion, any arc $\gamma$ in $\mu^{-1}(\{ 0 \}\times \R)$ is maximally tangent to the torus at any point of $\gamma$ in the torus. Moreover, at a point $z$ in the torus,
$\mu^{-1}(\{ 0 \}\times \R)$ contains $k$ maximally tangent curves, one for each direction.

As the definition of maximally tangent curves is invariant under monomial
transformation, we see 
that this is true for any $\mu^{-1}(d)$ where $d$ is a line of rational 
slope through $0$ in $\R^2$. 
\end{remark}

The behaviour of the volume function along a maximally tangent curve $\alpha$ only depends on the order $\ord(\xi_j)$ and the sign $\epsilon_j$ of $\im(F_{\ord(\xi_j)-k}e^{\ord(\xi_j)i\theta_j})$, called the sign of the direction. At this point, it seems to depend on the local coordinate chosen, but the following lemma shows the converse:

\begin{proposition}\label{prop:varvolmaxtang}
Let $\alpha$ be a maximal tangent curve in the direction $\xi_j$ of order $\ord(\xi_j)$ and sign $\epsilon_j$. Then:
\begin{enumerate}
  \item if $k+\ord(\xi_j)= +\infty$, then $s\mapsto V(\alpha_s)$ is constant.
  \item if $k+\ord(\xi_j)$ is even, and $\epsilon_j >0$, then $s\mapsto V(\alpha_s)$ has a strict local maximum at $0$.
  \item if $k+\ord(\xi_j)$ is even, and $\epsilon_j <0$, then $s\mapsto V(\alpha_s)$ has a strict local minimum at $0$.
  \item if $k+\ord(\xi_j)$ is odd, $s\mapsto V(\alpha_s)$ is strictly monotonous.
\end{enumerate}
\end{proposition}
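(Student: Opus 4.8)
The strategy is to write the volume function along a maximally tangent curve as an explicit integral in the local coordinate $t = \rho e^{i\theta}$, using the formula for $V(t)$ already derived in the proof of Proposition \ref{extrema}, and then extract its leading term along $\alpha_s = s e^{i\theta_j}(1 + a_l s^l + \ldots)$ with $a_1,\ldots,a_{l-1}$ real (the form of a maximally tangent curve given by the preceding lemma). Recall that with $x = x_0 e^{it^k}$, $y = y_0 e^{it^k F(t)}$ and $\mu(z) = (0,0)$ (so $\log|x_0| = \log|y_0| = 0$), the first ``boundary'' term $\im(\log|y_0| t^k - \log|x_0| t^k F(t))$ vanishes identically, and
\[
V(\alpha_s) = V(z) + \int_0^{\alpha_s}\left(\re(t^k F(t))\, d\im(t^k) - \re(t^k)\, d\im(t^k F(t))\right).
\]
First I would substitute the parametrization and track the lowest-order term in $s$ of the integrand, using that $F(t) = F_0 + F_l t^l + O(t^{l+1})$ with $F_0$ real (as $\gamma(z) \in \R$) and that the reality of $a_1,\ldots,a_{l-1}$ kills all would-be lower-order contributions — exactly as in the last trigonometric computation in the proof of Proposition \ref{extrema}, which already isolates the term of order $|t|^{2k+l}$.

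The key computation is that the leading term of $V(\alpha_s) - V(z)$ is, up to a positive constant depending on $k$ and $l$, equal to $s^{k + \ord(\xi_j)}\cdot(\text{sign involving }\im(F_l e^{(k+l)i\theta_j}))$; more precisely it has the form $-c\, s^{k+l}\,\im(F_l e^{(k+l)i\theta_j}) + o(s^{k+l})$ for some $c > 0$, where $k+l = k + \ord(\xi_j)$. Here I use the definition of the sign $\epsilon_j$ of the direction as the sign of $\im(F_{\ord(\xi_j) - k} e^{\ord(\xi_j) i\theta_j})$. Once this leading term is in hand, the four cases follow by inspection: if $F$ is a monomial-times-real-series to all orders (the case $\ord(\xi_j) = \infty$) the integrand vanishes identically and $V$ is constant along $\alpha$; if $k + \ord(\xi_j)$ is even the leading term is $-c\,\epsilon_j\, s^{k+\ord(\xi_j)}$ which gives a strict local max when $\epsilon_j > 0$ and a strict local min when $\epsilon_j < 0$; if $k + \ord(\xi_j)$ is odd the leading term is an odd monomial in $s$, hence $V(\alpha_s)$ is strictly monotonic near $0$.

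Finally I must address the claim implicit in the statement (``the following lemma shows the converse''), namely that $\ord(\xi_j)$ and $\epsilon_j$, a priori defined through the chosen logarithmic Newton–Puiseux coordinate, in fact control the behaviour of $V$ intrinsically: since the quantity $k + \ord(\xi_j)$ is the order of vanishing of $V(\alpha_s) - V(z)$ and the sign of the leading coefficient is $-c\,\epsilon_j$ with $c>0$, both the parity of $k+\ord(\xi_j)$ and the sign $\epsilon_j$ are read off from the intrinsic function $s \mapsto V(\alpha_s)$, and are therefore independent of the coordinate and of the particular maximally tangent curve $\alpha$ chosen in the direction $\xi_j$ — this is exactly what lets us later define an index of the critical point. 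The main obstacle I anticipate is purely bookkeeping: verifying that the constant $c$ multiplying $s^{k+l}\im(F_l e^{(k+l)i\theta_j})$ is genuinely nonzero and of a definite sign, which requires carefully redoing the integration-by-parts and the trigonometric expansion $-\tfrac{l}{2}\sin(\phi_l + (2k+l)\theta) - \tfrac{l}{2}\sin(\phi_l + l\theta) - k\sin(\phi_l + k\theta)$ from Proposition \ref{extrema} but now evaluated along $\theta \equiv \theta_j$ rather than integrated over a full circle, and checking that the contribution of the correction terms $a_1 s + \cdots + a_{l-1}s^{l-1}$ (all real) to the integrand is of strictly higher order than $s^{k+l}$. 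No new idea is needed beyond what appears in the proof of Proposition \ref{extrema}; the work is in adapting that local analysis from a closed loop to a radial arc.
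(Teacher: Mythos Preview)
Your plan is essentially the paper's proof. The paper also reparametrises so that $\alpha_s = s e^{i\theta_j}(1+\alpha_l s^l+\ldots)$ and then isolates the leading term of order $2k+l-1$ in $\frac{d}{ds}V(\alpha_s)$; the only stylistic difference is that the paper differentiates directly via $dV=\log|y|\,d\arg x-\log|x|\,d\arg y$ (writing out the leading terms of $\log|x|,\log|y|,d\arg x,d\arg y$ separately), rather than going back through the integral formula of Proposition~\ref{extrema} as you propose --- this avoids the trigonometric detour you anticipate and makes the cancellation of the $\alpha_l$-terms immediate.

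One bookkeeping slip to fix: you write ``$k+l = k+\ord(\xi_j)$'', but with the convention of the preceding lemma one has $\ord(\xi_j)=k+l$, so the leading power of $s$ in $V(\alpha_s)-V(z)$ is $2k+l = k+\ord(\xi_j)$, not $k+l$. Your case analysis is stated in terms of $k+\ord(\xi_j)$ and is correct, so this is only a labelling inconsistency, not a gap.
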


\begin{proof}
Let $l = \ord(\xi_j)-k$ and $\theta = \theta_j$.
The statement does not depend on a real reparametrisation of the variable $s$, so we assume that we can write in coordinate $\alpha_s =  se^{i\theta_k}(1 + \alpha_ls^l + \ldots)$.

Using the formula for $dV$, this parametrisation and computations similar to those of the previous lemma, we get that the first non vanishing term in 
$\frac{d}{ds}V(\alpha_s)$ is of order $2k+l-1$. Indeed, 
this first non vanishing term is obtained by looking at the terms 
of order $k+l$ of $\ln|x|$ and $\ln|y|$ and $k-1$ of $d\arg(x)$ and $d\arg(y)$.
There may be some vanishing terms in the following expressions but we may anyway write (recall that $e^{ik\theta}$ and $F_0$ are real):
\begin{enumerate}
\item $\ln|x| = s^{k+l} (-\im(e^{ik\theta} \alpha_l))+\ldots$
\item $d\arg(x) = ke^{ik\theta} s^{k-1}+\ldots$
\item $\ln|y| = s^{k+l} (-\im(e^{i(k+l)\theta} F_l)- \im(F_0 ke^{ik\theta}\alpha_l))+\ldots$
\item $d\arg(y) = ke^{ik\theta} F_0 s^{k-1}+\ldots$
\end{enumerate}

We deduce:
\begin{eqnarray*}
\frac{d}{ds}V(\alpha_s) = &\ln|y|d\arg(x) - \ln|x|d\arg(y) \\
=& ks^{2k+l-1}\left(-\im(F_{l} e^{(l+k)i\theta_j})\right)+ \ldots
\end{eqnarray*} 

So the sign of this derivative is $-\epsilon$ times the sign of $s^{2k+l-1}$. This proves the proposition.
\end{proof}

In the cases (2), (3), (4), we call the direction $\xi_j$ respectively maximizing, 
minimizing, or monotonous.
We are ready to define the index of the point $z \in \hat C$, which extends the regular case:
\begin{definition}
Let $z$ be a point in $\hat C$ with $\mu(z)=(0,0)$.

If $\gamma(z)$ is not real, then let $k$ be the ramification at $z$ of $z\to (x,y)$ and $\epsilon$ be minus the sign of $\im(\gamma(z))$. Then the index of $z$, denoted by $\textrm{Ind}(z)$, is $k\epsilon$.

If $\gamma(z)$ is real, the index of $z$, denoted by $\textrm{Ind}(z)$, is the number of maximizing directions minus the number of minimizing directions.
\end{definition}

\begin{remark}\label{rem:maxtan2}
As discussed in the previous remark \ref{rem:maxtang}, the index of a point $z$ is invariant under monomial transformations: this transformation sends minimizing (resp. maximizing) curves to minimizing (resp. maximizing) curves.
\end{remark}

Note that only a finite number of points $z\in \mu^{-1}(0,0)$ have a non trivial index. In particular, any 
non-singular point of a curve in the intersection of $C$ and the torus has index $0$: at such a point, 
there is only $1$ maximally tangent curve to the torus, which is included in the torus. Along this curve, 
the volume is constant.

\subsubsection{Mahler measure of exact polynomial, general case}

The index we defined above also describes the contributions of points in the torus to the Mahler measure as shown by the following theorem. 

\begin{theorem}\label{formule-generale}
Let $P\in\C[X^{\pm 1},Y^{\pm 1}]$ be an exact polynomial. We have the formula:
\[ m(P)=\log c(P)+\frac{1}{2\pi}\sum_{z\in \mu^{-1}(0,0)} \mathrm{Ind}(z)V(z)\]
\end{theorem}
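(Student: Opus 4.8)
\textbf{Proof plan for Theorem \ref{formule-generale}.}

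The plan is to reduce to the computation already carried out in the regular case (Theorem \ref{formule}) by the same mechanism: realize $2\pi m(P)$ as $V(\partial\overline{\Gamma}_\alpha)$ for a generic rational half-line $\alpha$, and then count, for each $z\in\mu^{-1}(0,0)$, the algebraic number of branches of $\overline{\Gamma}_\alpha$ that emanate from $z$, weighted by whether $V$ increases or decreases along them. The content of the theorem is that this weighted count equals $\mathrm{Ind}(z)$. First I would recall from the discussion preceding the theorem that $m(P)=\log|a|+\frac{1}{2\pi}\int_{\overline{\Gamma}}\eta=\log c(P)+\frac{1}{2\pi}V(\partial\overline{\Gamma})$, after a monomial transformation making the leading coefficient constant; by Lemma \ref{demidroite} and the remark on maximally tangent curves (Remark \ref{rem:maxtang}), $\overline{\Gamma}$ may be taken to be the completion of $\mu^{-1}(d)$ for any line $d$ of rational slope through $0$, avoiding the finitely many bad slopes, and the endpoints $\partial\overline{\Gamma}$ all lie in $\mu^{-1}(0,0)$ (by Propositions \ref{ideal} and \ref{extrema}, no other points of $\widehat C$ can be endpoints). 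Orienting each arc in the direction of increasing $V$, we get $2\pi m(P)=\sum_{z\in\mu^{-1}(0,0)} n(z)\,V(z)$ where $n(z)$ is the number of arcs of $\overline{\Gamma}$ ending at $z$ with $V$ increasing toward $z$, minus the number ending at $z$ with $V$ decreasing toward $z$.

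Next I would compute $n(z)$ locally. For $z$ with $\gamma(z)\notin\R$ this is exactly the count performed in the proof of Theorem \ref{formule}: in a logarithmic Newton--Puiseux coordinate $x=x_0e^{t^k}$, $y=y_0e^{t^kF(t)}$ with $\im F(0)\neq 0$, the condition $|x|=1$ forces $\cos(k\theta)=0$ giving $2k$ candidate branches, of which $k$ satisfy the sign condition coming from $d$; along each of these $V$ is monotone with the \emph{same} sign, namely the sign of $-\im\gamma(z)=\epsilon(z)$, because the leading term of $\frac{d}{ds}V$ along any such branch is governed by $\im F(0)$. Hence $n(z)=k(z)\epsilon(z)=\mathrm{Ind}(z)$, as before. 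For $z$ with $\gamma(z)\in\R\setminus\{0\}$, the key observation (Remark \ref{rem:maxtang}) is that every arc of $\mu^{-1}(d)$ through $z$ is maximally tangent to the torus, and by the lemma in \S\ref{ss:maxtancurves} such arcs exist only in the $k$ special directions $\xi_1,\dots,\xi_k$, one arc per direction (those directions are exactly the ones with $|x|$ -- or more precisely $\mu$ -- vanishing to higher order). By Proposition \ref{prop:varvolmaxtang}, along the arc in direction $\xi_j$ the function $s\mapsto V(\alpha_s)$ is: constant if $k+\ord(\xi_j)=\infty$ (this arc lies in the torus and contributes $0$, and in fact does not reach $\mu^{-1}(d\setminus\{0\})$); a strict max if $k+\ord(\xi_j)$ even and $\epsilon_j>0$; a strict min if even and $\epsilon_j<0$; strictly monotone if $k+\ord(\xi_j)$ is odd. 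I would now match each case to its contribution to $n(z)$: a strict-max direction gives an arc approaching $z$ from one side with $V$ increasing toward $z$, i.e. $+1$; a strict-min direction gives $-1$; a monotone direction $\xi_j$ contributes one arc through $z$ on which $V$ increases on one side of $z$ and decreases on the other, so it contributes $+1$ on the incoming branch and $-1$ on the outgoing branch, net $0$; and the infinite-order (toric) directions contribute $0$. Summing, $n(z)=(\#\text{maximizing directions})-(\#\text{minimizing directions})=\mathrm{Ind}(z)$.

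Finally I would assemble: $2\pi m(P)=\sum_{z}n(z)V(z)=\sum_{z\in\mu^{-1}(0,0)}\mathrm{Ind}(z)V(z)$, and dividing by $2\pi$ and adding $\log c(P)$ gives the formula; invariance of both sides under monomial transformations (Remark \ref{rem:maxtan2} for the index, and obvious invariance of $m$ and $c$) removes the auxiliary normalizations. The main obstacle I anticipate is the bookkeeping in the real-$\gamma$ case: one must check carefully that each maximally tangent direction contributes a \emph{single} local branch of $\overline{\Gamma}_\alpha$ with the claimed monotonicity sign, that the monotone and toric directions genuinely cancel/vanish, and that no branches of $\mu^{-1}(d)$ not accounted for by the $k$ special directions can approach $z$ -- this last point is where the lemma identifying maximally tangent curves with curves satisfying $|x|\equiv 1$ (equivalently lying in a line $\mu^{-1}(d)$) is essential, and some care is needed since a priori a branch could have $\mu$ vanishing only to order $k$ at $z$; but such a branch cannot satisfy $|x|=1$ identically unless its direction is one of the $\xi_j$, so it does not occur in $\overline{\Gamma}_\alpha$. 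A secondary point to verify is that the finitely many excluded rational slopes, and the finitely many $z$ with nontrivial index, cause no trouble, which follows as in Lemma \ref{demidroite}.
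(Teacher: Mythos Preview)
Your proposal is correct and follows essentially the same approach as the paper's own proof: both reduce to the boundary formula $m(P)=\log c(P)+\frac{1}{2\pi}V(\partial\overline{\Gamma})$ from Lemma~\ref{demidroite}, reuse the count from Theorem~\ref{formule} when $\gamma(z)\notin\R$, and when $\gamma(z)\in\R$ identify the local branches of $\overline{\Gamma}$ at $z$ with half-arcs of maximally tangent curves and read off their contribution from Proposition~\ref{prop:varvolmaxtang} (maximizing $\Rightarrow +1$, minimizing $\Rightarrow -1$, monotone $\Rightarrow$ net $0$). One small correction: the reason the endpoints of $\overline{\Gamma}$ all lie in $\mu^{-1}(0,0)$ comes directly from Lemma~\ref{demidroite} (the chosen half-line avoids ideal points and singularities), not from Propositions~\ref{ideal} and~\ref{extrema}, which concern extrema of $V$ on $\widehat C$ rather than endpoints of $\Gamma$.
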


\begin{proof}
First apply Lemma \ref{demidroite} so that $\Gamma=\mu^{-1}(\{0\}\times(0,+\infty))$ is a smooth collection of arcs with monotonic volume on each component. Up to applying yet another monomial transformation, we may assume that the slope $\gamma$ is never $0$ for an isolated or singular point $z$ in $\mu^{-1}(0,0)$. Let $k$ be the ramification order at $z$ of $z \mapsto (x,y)$. If $\gamma(z)$ is not real, then its contribution has already been understood in the 
proof of Theorem \ref{formule}: it is $k\epsilon V(z) = \textrm{Ind}(z) V(z)$.

We now assume that $\gamma(z)$ is not real. Let $D_{\rm real}(z)$ be the number of directions included in the torus, $D_{\min}(z)$ the number of minimizing directions, $D_{\max}(z)$ the number of maximizing directions, and $D_{\rm mon}(z)$ the number of monotonous directions. Note that
the sum of this four numbers is $k$ and that if the slope $\gamma(z)$ is not real, we have $D_{\min}(z) = k$ or $D_{\max}(z)=k$ depending on the sign of its imaginary part. The invariance of these four integers by monomial transformations (see Remark \ref{rem:maxtan2}) implies the following lemma:

\begin{lemma}
The arcs of $\Gamma$ containing $z$ are composed of half-curves maximally tangent to the torus, namely:
\begin{itemize}
\item[-] $D_{\min}(z)$ arcs are leaving from $z$, corresponding to the side in which the volume increases.
\item[-] $D_{\max}(z)$ arcs are arriving at $z$, corresponding to the side in which the volume increases.
\item[-] for each monotonous direction, there can be $0$ or $2$ arcs. In the second case, those are
both the half-curves, one leaving from $z$, the other arriving.
\end{itemize}
\end{lemma}

\begin{proof}
The point $z$ is in the torus. As noted in remark
\ref{rem:maxtang}, around $z$, the arcs of $\Gamma$ are included in the set $\mu^{-1}(\{0\}\times \R)$ which consists in $k$ maximally tangent curves at $z$.
But the variation of the volume along these $k$ curves has been described in proposition \ref{prop:varvolmaxtang}. This description gives the lemma: note that the maximally tangent curves completely
contained in the torus do not appear, as the volume is constant along such a curve and $\Gamma$ does not see them. 
\end{proof}

The monotonous arcs, if they appear, cancel immediately: one arc is leaving whenever the other arrives. So, the contribution of $z$ to the Mahler measure is $(D_{\max}(z)-D_{\min}(z)) V(z) = \textrm{Ind}(z)V(z)$. Summing over all $z$, we get the formula of the theorem:
$$m(P) = \log(c(P)) + \frac{1}{2\pi}\sum_{z\in \mu^{-1}(0,0)} \textrm{Ind}(z) V(z).$$
\end{proof}

Arguing as in the regular case, we get:
\begin{theorem}\label{inegalite-generale}
Let $P\in \C[X^{\pm 1},Y^{\pm 1}]$ be an exact polynomial irreducible over $\C$ normalized so that $c(P)=1$.  
Then we have the inequality
\[2\pi m(P)\ge \max V-\min V.\]
\end{theorem}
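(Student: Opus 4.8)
The plan is to mimic the proof of Theorem \ref{inegalite}, replacing the numbers $k_i\epsilon_i$ by the indices $\mathrm{Ind}(z)$ and using Theorem \ref{formule-generale} in place of Theorem \ref{formule}. The key structural fact we will use is that for \emph{every} admissible rational half-line $d_\alpha$ through the origin the completion $\overline{\Gamma}_\alpha$ of $\mu^{-1}(d_\alpha)$ is a finite collection of arcs along which $V$ is strictly monotonic (Lemma \ref{demidroite}), and that at each point $z\in\mu^{-1}(0,0)$ the arcs of $\overline{\Gamma}_\alpha$ adjacent to $z$ consist of $D_{\max}(z)$ arcs arriving, $D_{\min}(z)$ arcs leaving, plus matched pairs along monotonous directions (this is exactly the Lemma inside the proof of Theorem \ref{formule-generale}, which is stated for $\Gamma$ but holds verbatim for any $\overline{\Gamma}_\alpha$ by Remark \ref{rem:maxtang}). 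In particular the endpoints of the arcs of $\overline{\Gamma}_\alpha$ all lie in $\mu^{-1}(0,0)$, and $V(\partial\overline{\Gamma}_\alpha)=\sum_z \mathrm{Ind}(z)V(z)=2\pi m(P)$ for every such $\alpha$.

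First I would fix notation: let $I_\alpha=V(\overline{\Gamma}_\alpha)\subseteq[\min V,\max V]$. Since $V$ is strictly monotonic on each of the finitely many arcs composing $\overline{\Gamma}_\alpha$, the total variation of $V$ over $\overline{\Gamma}_\alpha$ is at least the Lebesgue measure $\lambda(I_\alpha)$, and this total variation equals $\sum_z \mathrm{Ind}(z)V(z)=2\pi m(P)$ because leaving arcs contribute $+V$ at their start and arriving arcs contribute $+V$ at their end, with monotonous pairs cancelling. Hence $2\pi m(P)\ge\lambda(I_\alpha)$. Next, I observe that every point of $\mu^{-1}(0,0)$ with nonzero index lies on $\overline{\Gamma}_\alpha$ (by the adjacency lemma $D_{\max}(z)+D_{\min}(z)>0$ whenever $\mathrm{Ind}(z)\ne 0$), so $I_\alpha$ contains all the values $V(z)$ for $z$ of nonzero index, in particular it contains both $\max V$ and $\min V$ (which are attained on $\mu^{-1}(0,0)$ by Propositions \ref{ideal} and \ref{extrema}, at points that are necessarily local maxima/minima and thus have nonzero index — either $\gamma(z)\notin\R$, or $\gamma(z)$ real but $z$ is still an extremum which, tracing Proposition \ref{prop:varvolmaxtang}, forces $D_{\max}=k$ or $D_{\min}=k$).

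Now suppose for contradiction $2\pi m(P)<\max V-\min V$. Then $\lambda(I_\alpha)<\max V-\min V$, so $I_\alpha$ has a ``hole'': there is $x\in(\min V,\max V)\setminus I_\alpha$. As in the proof of Theorem \ref{inegalite}, splitting $\overline{\Gamma}_\alpha$ at the level $x$ and counting, the number of arc-endpoints of $\overline{\Gamma}_\alpha$ lying strictly above $x$ that are ``maxima-type'' (arriving) equals the number that are ``minima-type'' (leaving) — here I use that $\mathrm{Ind}(z)=D_{\max}(z)-D_{\min}(z)$ counts arriving minus leaving arcs through $z$, and $\sum_{z:V(z)>x}\mathrm{Ind}(z)$ must vanish since no arc of $\overline{\Gamma}_\alpha$ crosses the level $x$. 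This count is independent of $\alpha$, so it also vanishes for any admissible $\beta$, which forces $x\notin I_\beta$ for all admissible $\beta$; since the extremal values of $I_\beta$ are among the finitely many values $V(z)$, $z\in\mu^{-1}(0,0)$, there is a uniform $\epsilon>0$ with $(x-\epsilon,x+\epsilon)\cap I_\beta=\emptyset$ for all admissible $\beta$, and because $\bigcup_\beta\overline{\Gamma}_\beta$ is dense in $\widehat C$ we conclude $V(\widehat C)\cap(x-\epsilon,x+\epsilon)=\emptyset$, contradicting the continuity of $V$ on the connected curve $\widehat C$ ($P$ irreducible). Therefore $2\pi m(P)\ge\max V-\min V$.

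The main obstacle is the bookkeeping in the splitting argument: I need to make precise that ``number of arriving arcs above $x$ equals number of leaving arcs above $x$'' in the presence of monotonous directions, and that this balance is genuinely $\alpha$-independent. This is where the invariance of the four integers $D_{\max},D_{\min},D_{\mathrm{mon}},D_{\mathrm{real}}$ under monomial transformations (Remark \ref{rem:maxtan2}) does the real work — it guarantees the adjacency structure at each $z\in\mu^{-1}(0,0)$ is the same for $\overline{\Gamma}_\alpha$ and $\overline{\Gamma}_\beta$, so the level-crossing count transfers. Once that invariance is invoked, the rest is a verbatim repeat of the topological argument in Theorem \ref{inegalite}.
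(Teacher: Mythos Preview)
Your proposal is correct and follows exactly the approach the paper indicates: the paper's entire proof of Theorem \ref{inegalite-generale} is the single sentence ``Arguing as in the regular case, we get'', and you have carefully carried out that argument, replacing the multiplicities $k_i,l_j$ of Theorem \ref{inegalite} by the arriving/leaving counts $D_{\max}(z)+m_\alpha(z)$, $D_{\min}(z)+m_\alpha(z)$ furnished by the lemma inside the proof of Theorem \ref{formule-generale}. The one point worth tightening is the transfer of the level-crossing balance from $\alpha$ to $\beta$: what is $\alpha$-independent is not the raw count of arriving arcs above $x$ but rather the quantity $\sum_{V(z)>x}\mathrm{Ind}(z)=0$, and it is \emph{this} vanishing (together with monotonicity of $V$ along each arc of $\overline{\Gamma}_\beta$) that forces all arrival slots above $x$ to be filled by arcs starting above $x$, hence $x\notin I_\beta$; you essentially say this, but the phrase ``this count is independent of $\alpha$'' could be read as referring to the raw counts, which do depend on $m_\alpha(z)$.
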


\section{Finding exact polynomials}\label{finding}

Choose a polygon $\Delta\in \R^2$ with integral corners and coefficients $c_{i,j}\in \C$ for $(i,j)\in\partial \Delta\cap \Z^2$ such that the side polynomials constructed from these coefficients have all their roots of modulus 1. Then any choice of coefficients $c=(c_{i,j})$ for $(i,j)\in \inter{\Delta}\cap \Z^2$ gives rise to a tempered polynomial $P_c$. The question we address in this section is: for which coefficients is $P_c$ exact?

It is well-known that generically, the curve $\widehat{C}_c$ associated to $P_c$ is smooth and has genus $N=\card (\inter{\Delta}\cap \Z^2)$. Hence, the cohomology class of $\eta|_{\widehat{C}_c}$ belongs to the $N$-dimensional space $H^1(\widehat{C}_c)$. Moreover, the period map being analytical, we expect that the $N$-th dimensional family $(P_c)$ is exact only for a finite number of $c$. 

This is trivial for polygons which have no interior point. In this section, we prove it for tempered families of elliptic curves, that is for polygons with one integral point. 

\begin{theorem}
Let $\Delta$ be a Newton polygon with $(0,0)$ as the unique interior point and let $(c_{i,j})_{(i,j)\in\partial \Delta\cap \Z^2}$ be a system of coefficients such that all side polynomials have roots which are simple and of modulus 1. Then for all but a finite number of values of $c\in \C$ the polynomial 
$$P_c=\sum_{(i,j)\in\partial \Delta\cap\Z^2}c_{i,j}X^iY^j -c$$ 
is exact. 
\end{theorem}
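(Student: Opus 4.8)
The plan is to study the \emph{exact locus} $E=\{c\in\C : P_c\text{ is exact}\}$ by realising the curves $C_c$ as the fibres of the pencil $Q:(\C^*)^2\to\C$, $Q=\sum_{(i,j)\in\partial\Delta\cap\Z^2}c_{i,j}X^iY^j$, and reading exactness off the periods of $\eta$ along this family; the outcome is that $E$ is discrete, hence finite. First I would record that every $P_c$ is tempered, since the interior coefficient $-c$ leaves all side polynomials unchanged; consequently $\eta|_{C_c}$ extends to a closed form on the smooth projective model $\widehat C_c$ and defines a class $[\eta_c]\in H^1(\widehat C_c,\R)$, and $P_c$ is exact precisely when $[\eta_c]=0$. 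Removing the finitely many $c$ that are critical values of $Q$ (those for which $\widehat C_c$ is singular or of genus $0$), I work over the punctured base $B\subset\C$ on which $\widehat C_c$ is a smooth curve of genus $1$. Choosing locally a symplectic basis $(A,B)$ of $H_1(\widehat C_c,\Z)$ and setting $\pi_\gamma(c)=\int_\gamma\eta$, exactness over $B$ becomes the condition $\Phi(c):=(\pi_A(c),\pi_B(c))=(0,0)$.

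The key step is to differentiate the period map. Writing $\Omega=\frac{dx}{x}\wedge\frac{dy}{y}$ we have $d\eta=-\im\Omega$, and the Gelfand--Leray decomposition $\Omega=dQ\wedge\omega$ restricts on each fibre to the holomorphic differential $\omega_c$ of $\widehat C_c$. Lifting $\partial/\partial c$ to a vector field $v$ with $dQ(v)=1$ and $d\bar Q(v)=0$, Cartan's formula $\mathcal L_v\eta=d(\iota_v\eta)+\iota_v d\eta$, with the exact term integrating to zero over the closed cycle and with $dQ|_{C_c}=d\bar Q|_{C_c}=0$, gives $\partial_c\pi_\gamma=\tfrac{i}{2}\int_\gamma\omega_c=\tfrac{i}{2}\Pi_\gamma$, where $\Pi_\gamma=\int_\gamma\omega_c$ is an honest period of $\omega_c$; since $\pi_\gamma$ is real-valued this also fixes $\partial_{\bar c}\pi_\gamma=-\tfrac{i}{2}\overline{\Pi_\gamma}$. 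Hence the real gradient is $\nabla\pi_\gamma=(-\im\Pi_\gamma,-\re\Pi_\gamma)$, and the Jacobian of $\Phi$ satisfies
\[|\det J_\Phi|=\bigl|\im(\Pi_A\overline{\Pi_B})\bigr|=|\Pi_A|^2\,\im\tau_c,\qquad \tau_c=\Pi_B/\Pi_A.\]

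Now I would invoke the Riemann bilinear relation for $\omega_c$ on the genus-$1$ curve $\widehat C_c$: it gives $\Pi_A\ne 0$ and $\im\tau_c>0$, so $\det J_\Phi$ is nowhere zero on $B$. Thus $\Phi$ is a local diffeomorphism and $E\cap B=\Phi^{-1}(0)$ is discrete. To upgrade discreteness to finiteness it remains to exclude accumulation of $E$ at the finitely many punctures of $B$ and at infinity. There I would use the nodal degeneration at a critical value $c_0$: one of the two classes is a vanishing cycle whose period tends to $0$, while the complementary period diverges like $\tfrac{1}{2\pi}\log|c-c_0|$ times a nonzero constant, so $\Phi\ne 0$ on a punctured neighbourhood of $c_0$; the behaviour at infinity is controlled the same way, using that the simple unit-modulus roots of the side polynomials pin down the type of the fibres there. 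Since the discrete set $E$ cannot escape to the boundary, it is finite.

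The main obstacle is the derivative identity together with the positivity $|\det J_\Phi|=|\Pi_A|^2\im\tau_c>0$: this is exactly what prevents the real-analytic locus $\{\pi_A=\pi_B=0\}$ from being a curve and forces it to be isolated, and it hinges on the precise link $d\eta=-\im\Omega$ and on the Gelfand--Leray form of $\Omega$ along the pencil (so that the gradient of each period is governed by a holomorphic elliptic period). The secondary difficulty is the boundary analysis of the previous paragraph, requiring the Picard--Lefschetz / limiting-period behaviour of $\eta$ at the degenerate fibres; this is the only place where the genus-$1$ hypothesis and the simplicity of the unit-modulus roots of the side polynomials are genuinely used.
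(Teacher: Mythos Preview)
Your approach to the heart of the argument---showing that the real differential of the period map $\Phi=(\pi_A,\pi_B)$ is invertible by identifying $\partial_c\pi_\gamma$ with $\tfrac{i}{2}$ times the holomorphic elliptic period $\Pi_\gamma$ and then invoking Riemann's bilinear relation---is correct and is essentially a repackaging of the paper's proof. The paper expresses the same fact through the Hamiltonian vector field $X_P$ of $Q$ with respect to $\omega=\tfrac{dx}{x}\wedge\tfrac{dy}{y}$: the dual form $\alpha_P$ is your Gelfand--Leray form, and the ``complex period $T$'' that the paper shows to be nonzero is exactly your $\Pi_\gamma$. So on the local-diffeomorphism step the two proofs coincide.

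Where the proofs diverge is in passing from \emph{discrete} to \emph{finite}. The paper does not analyse the boundary of the regular locus at all; instead it invokes its earlier Theorem~\ref{intertore}: if $P_c$ is exact (and $V$ is nonconstant) then $C_c$ meets the torus $|x|=|y|=1$, i.e.\ $c\in Q\bigl((S^1)^2\bigr)$, a compact set. This single observation replaces your entire boundary discussion.

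Your boundary argument, by contrast, contains a genuine gap. At a nodal fibre $c_0$ the \emph{holomorphic} period $\Pi_B$ does diverge logarithmically, but $\Phi$ is built from the periods $\pi_\gamma=\int_\gamma\eta$ of the smooth bounded form $\eta$, and these remain bounded as the cycle degenerates: integrating your own identity $\partial_c\pi_B=\tfrac{i}{2}\Pi_B\sim C\log(c-c_0)$ shows that $\pi_B$ tends to a finite limit, not to infinity. Hence the assertion ``the complementary period diverges, so $\Phi\ne 0$ on a punctured neighbourhood of $c_0$'' is unjustified, and nothing in your sketch prevents $\Phi^{-1}(0)$ from accumulating at $c_0$. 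The treatment of $c\to\infty$ is likewise only a gesture (``controlled the same way''). The clean repair is to adopt the paper's compactness step: once the exact locus is confined to $Q\bigl((S^1)^2\bigr)$, discreteness on the regular locus together with the finiteness of the singular fibres yields finiteness outright, with no asymptotic analysis needed.
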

\begin{proof}
By Theorem \ref{intertore}, if $P_c$ is exact, there exists $x,y\in \C$ with $|x|=|y|=1$ and $P(x,y)=0$. As the coefficients $c_{i,j}$ are fixed, this implies that $c$ lives in a compact set. 
Let $c$ be a regular value of the map $P:(\C^*)^2\to \C$ defined by $P=\sum_{(i,j)\in\partial \Delta\cap\Z^2}c_{i,j}X^iY^j$ and $C_c$ be its zero set $\{(x,y) \in (\C^*)^2 | P(x,y)=c\}$. 

Let $[\eta_c]\in H^1(\widehat{C}_c,\R)$ be the cohomology class of the restriction of $\eta$ to $\widehat{C}_c$. There is a neighborhood $U$ of $c$ such that $P$ corestricted to $U$ is a trivial fibration. This allows to define the period map $\mathcal{P}:U\to H^1(\widehat{C}_c,\R)$. If we show that the differential of the period map is invertible at $c$, we will conclude that the set of $c's$ for which $P_c$ is exact is discrete and hence conclude the proof of the theorem.

We define $\omega=\frac{dx}{x}\wedge\frac{dy}{y}$ and $X_P$ the Hamiltonian vector field satisfying $i_{X_P}\omega=dP$. In formulas, $X_P=xy(\partial_yP\partial_x-\partial_xP\partial_y)$. 
This vector field is tangent to $C_c$ and does not vanish, we denote by $\alpha_P$ the holomorphic form satisfying $\alpha_P(X_P)=1$. By Lemma \ref{regular} below, this form is indeed holomorphic on the smooth projective model $\widehat{C}_c$ which has genus 1 and hence does not vanish. Hence the flow of $X_P$ gives the uniformization of $\widehat{C}_c$. 

Consider a smooth family of cycles $\gamma_t:S^1\to C_{c(t)}$ where $c(0)=c$ and $c'(0)=\lambda$. As $d\eta=-\im \omega$ we have by Stokes formula $\int_{\gamma_1}\eta-\int_{\gamma_0}\eta=-\int_{S^1\times[0,1]}\im(\gamma^*\omega)$. Letting $t$ going to $0$ we find that 

\[\mathcal{P}'(c)(\gamma_0)=\frac{d}{dt}\Big|_{t=0}\int_{\gamma_t}\eta=\int_{\gamma_0}\im (\lambda i_\xi\omega)\]
where $\xi$ is a vector field defined on $C_c$ such that $dP(\xi)=1$. 

If we take as $\gamma_0$ a periodic orbit of the flow of $X_P$ of complex period $T$, we will find $\int_{\gamma_0} i_\xi\omega=\int \omega(\xi,X_P)dt=\int dP(\xi)dt=T$. 
This proves that $\mathcal{P}'(c)\ne 0$ and hence the theorem. 
\end{proof}

\begin{lemma}\label{regular}
Let $P\in\C[X^{\pm 1},Y^{\pm 1}]$ be a polynomial with Newton polygon having one interior point and whose side polynomials have only simple roots of modulus 1. Suppose that the vanishing locus $C$ of $P$ on $(\C^*)^2$ is smooth and denote by $\widehat{C}$ its projective model.  
Then the form $\alpha_P$ dual to the Hamiltonian vector field $X_P$ of $P$ relative to the symplectic form $\omega=\frac{dx}{x}\wedge \frac{dy}{y}$ has no pole nor zero on $\widehat{C}$. 
\end{lemma}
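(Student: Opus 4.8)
The plan is to realise $\alpha_P$ as a Poincaré residue on a toric compactification and then invoke adjunction. First I would record the elementary local picture on the affine curve: from $X_P=xy(\partial_yP\,\partial_x-\partial_xP\,\partial_y)$ and $\alpha_P(X_P)=1$ one reads off $\alpha_P=\frac{dx}{xy\,\partial_yP}=-\frac{dy}{xy\,\partial_xP}$ on $C$. Since $C$ is smooth in $(\C^*)^2$, the gradient $(\partial_xP,\partial_yP)$ never vanishes there and $xy\neq 0$, so $\alpha_P$ is holomorphic and nowhere vanishing on $C$; the whole point is its behaviour at the finitely many points of $\widehat C\setminus C$. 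More invariantly, writing $\omega=dP\wedge\beta$ locally on $C$ (possible where $dP\neq0$) and using $i_{X_P}\omega=dP$ together with $dP(X_P)=\omega(X_P,X_P)=0$, one gets $\beta(X_P)=-1$, hence $\alpha_P=-\beta|_C=-\mathrm{Res}_{\overline C}\!\bigl(\tfrac{\omega}{P}\bigr)$, where $\overline C=\{P=0\}\subset(\C^*)^2$ and $\omega=\tfrac{dx}{x}\wedge\tfrac{dy}{y}$.

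Next I would pass to the projective toric surface $X_\Delta$ attached to the Newton polygon $\Delta$. Normalising $P$ by a monomial so that the unique interior lattice point of $\Delta$ is the origin (this is the normalisation occurring in the Theorem above), $\Delta$ is a two-dimensional lattice polygon with exactly one interior point, hence \emph{reflexive}: every edge $s$ lies on a supporting line $\{\langle\cdot,\nu_s\rangle=-1\}$, where $\nu_s\in\Z^2$ is the primitive inward normal, and $\langle\cdot,\nu_s\rangle\ge-1$ on $\Delta$. Let $X_\Delta^{\circ}=X_\Delta\setminus\{\text{torus-fixed points}\}$, a smooth surface whose boundary consists of the loci $D_s^{\circ}$, one per edge $s$. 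Because each side polynomial $P_s$ has nonzero corner coefficients and only simple roots, the closure of $C$ in $X_\Delta$ is smooth, avoids the fixed points, and meets each $D_s^{\circ}$ transversally at the $\mathrm{length}(s)$ distinct points corresponding to the roots of $P_s$; thus it is contained in $X_\Delta^{\circ}$, coincides with the projective model $\widehat C$, and $\widehat C\setminus C$ is exactly this finite transverse intersection. In particular $\widehat C$ is smooth, and the usual genus formula gives that it has genus $1$.

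Then I would compute the divisor of the rational $2$-form $\omega/P$ on $X_\Delta^{\circ}$. The toric log-canonical form $\omega$ has a simple pole along each $D_s^{\circ}$, while $\mathrm{ord}_{D_s}(P)=\min_{(i,j)\in\Delta}\langle(i,j),\nu_s\rangle=-1$ by reflexivity (the leading coefficient of $P$ along $D_s$ is $P_s$, which is not identically zero, so cancellation occurs only at the roots of $P_s$, not generically). Hence $\mathrm{ord}_{D_s}(\omega/P)=-1-(-1)=0$, so $\omega/P$ is regular along $D_s^{\circ}$ away from $\widehat C$; a local model at an intersection point $p_0\in D_s^{\circ}\cap\widehat C$ --- using transversality and $\mathrm{ord}_{D_s}(P)=-1$ --- shows that near $p_0$ one has $\omega/P=(\text{unit})\,du\wedge dw$ divided by a local equation of $\widehat C$ vanishing to order $1$, so the \emph{only} pole of $\omega/P$ on $X_\Delta^{\circ}$ is a simple pole along $\widehat C$. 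Therefore $\alpha_P=-\mathrm{Res}_{\widehat C}(\omega/P)$ is the Poincaré residue of a rational $2$-form on a smooth surface whose polar divisor is the smooth projective curve $\widehat C$, hence a \emph{holomorphic} $1$-form on all of $\widehat C$. Since $\widehat C$ has genus $1$, a nonzero holomorphic $1$-form has empty zero divisor, and $\alpha_P$ is nonzero because it is already nonzero on $C$; so $\alpha_P$ has neither pole nor zero on $\widehat C$.

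The main obstacle, and the place where both hypotheses are genuinely used, is the local analysis at the ideal points in the third step: one must rule out any spurious zero or pole of $\omega/P$ appearing along the toric boundary. The ``one interior point'' hypothesis is precisely what makes $\Delta$ reflexive and forces $\mathrm{ord}_{D_s}(P)=-1$, while the ``simple roots'' hypothesis is precisely what guarantees that $\widehat C$ is smooth and transverse to the boundary, so that the Poincaré residue is available and $\widehat C$ has genus $1$. An alternative route would be a direct Newton--Puiseux computation of $\mathrm{ord}_z(\alpha_P)$ at each ideal point $z$ from $\alpha_P=\frac{dx}{xy\,\partial_yP}$, but controlling $\mathrm{ord}_z(\partial_yP)$ there is exactly what the simple-root condition tames, so the toric residue argument seems cleaner.
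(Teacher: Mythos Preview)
Your argument is correct and takes a genuinely different route from the paper. The paper does precisely the ``alternative route'' you sketch at the end: after a monomial change putting the interior point at the origin and the relevant side on the line $i=1$, it writes the Puiseux parametrisation $x=t^{-1}$, $y=F(t)$ with $F(0)$ a root of the side polynomial, computes $y\partial_yP\sim \alpha t^{-1}$ with $\alpha=\sum_j j\,c_{1,j}F(0)^j\neq 0$ (this is where the simple--root hypothesis enters), and reads off $\alpha_P=\frac{dx}{xy\partial_yP}\sim -\alpha^{-1}\,dt$, hence holomorphic and nonvanishing at the ideal point. Your approach instead identifies $\alpha_P$ as (minus) the Poincar\'e residue of $\omega/P$ on the toric surface $X_\Delta$, uses that in dimension~2 a lattice polygon with a single interior lattice point is reflexive so that $\mathrm{ord}_{D_s}(\omega/P)=0$ along every boundary divisor, and then invokes adjunction together with the genus--one constraint $\deg K_{\widehat C}=0$ to rule out zeros. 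The paper's computation is shorter and entirely elementary, and it yields the nonvanishing at each ideal point directly rather than via a global degree argument; your argument is more structural, makes transparent why ``one interior point'' (reflexivity) and ``simple side roots'' (transversality to the toric boundary, hence smoothness of $\widehat C$ and genus exactly~1) are the right hypotheses, and would adapt more readily to polygons with more interior points, where the same residue is holomorphic but acquires zeros.
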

\begin{proof}
As $dP$ does not vanish on $C$, the vector field $X_P$ neither. Hence, it is sufficient to show that $\alpha_P$ is holomorphic at each ideal point. 
Up to translations and monomial transformations, one can suppose that the interior point is the origin and the side of $\Delta$ we are looking at is $i=1$. Hence, we have Newton-Puiseux coordinates $x=t^{-1}, y=F(t)$ where $\sum_{i=1,j}c_{i,j}F(0)^j=0$. 
One can write $\alpha_P=\frac{dx}{xy\partial_yP}$. As $y\partial_yP=t^{-1}\sum_{i=1,j}c_{i,j}jF(0)^j+o(t)$ we get $y\partial_yP\sim\alpha t^{-1}$ and $\alpha_P\sim -dt\alpha^{-1}$. This proves that $\alpha_P$ is regular at the corresponding ideal point. 
 \end{proof}

\section{A-polynomials}

We focus in this section on A-polynomials as a particular class of exact polynomials. 
We gather different observations and examples. We first explain that
there is an algebraic criterion for being a $A$-polynomial which is effective in genus 0. Then, we come back to the
examples given in the first part, proving they are $A$-polynomials and computing their
Mahler measure. We add a few examples of $A$-polynomial of cusped hyperbolic $3$-manifold
to exhibit different possible behaviours. We then move on to give an interpretation of
the Mahler measure of some knot exteriors $M$ in terms of "lengths" of the filling geodesic in long Dehn surgeries of $M$.

From now on, $P$ will be a polynomial over $\bar \Q$ and often over $\Z$.

\subsection{A $K$-theoretic criterion for being an $A$-polynomial}\label{ss:Ktheory}

\begin{definition}
We say that an irreducible polynomial $P\in \overline{\Q}[X^{\pm},Y^{\pm}]$ is a $A$-factor if there exists a 3-manifold $M$ with toric boundary such that $P$ is a factor of $A_M$. 
\end{definition}
In the sequel, we will write $F_P={\rm Frac }\,\overline{\Q}[X^{\pm},Y^{\pm}]/(P)$. 

\begin{proposition}
The polynomial $P\in \overline{\Q}[X^{\pm},Y^{\pm}]$ is an $A$-factor if and only if the Steinberg symbol $\{X,Y\}$ vanishes in $K_2(F_P)\otimes \Q$
\end{proposition}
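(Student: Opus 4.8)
The plan is to prove both implications separately. For the easy direction, suppose $P$ is an $A$-factor, so $P$ divides $A_M$ for some 3-manifold $M$ with toric boundary. Then the curve $\{P=0\}$ is dominated by (a component of) the character variety $X(M)$, and on $X(M)$ the functions $X,Y$ are the eigenvalues of the meridian and longitude. The classical fact (see \cite{CCGLS94}, and the $K$-theoretic discussion in \cite{BoydRodriguezVillegas}) is that the symbol $\{X,Y\}$ restricted to the $A$-polynomial curve is torsion in $K_2$ of its function field; the standard argument is that the gluing variety of an ideal triangulation of $M$ gives a coordinate system in which $\{X,Y\}$ is expressed, via the combinatorics of the triangulation, as a sum of symbols $\{z_i, 1-z_i\}$ (Steinberg relations) plus boundary terms that cancel, so $\{X,Y\}=0$ in $K_2(F_P)\otimes\Q$. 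I would cite this rather than reprove it, since it is exactly the input used in \cite{BoydRodriguezVillegas}.

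The substantial direction is the converse: assuming $\{X,Y\}=0$ in $K_2(F_P)\otimes\Q$, construct a 3-manifold $M$ with toric boundary having $P$ as a factor of $A_M$. Here I would follow the strategy the authors attribute to Ghys \cite{ghys}. The hypothesis $\{X,Y\}=0$ in $K_2(F_P)\otimes\Q$ means, by Matsumoto's theorem and the structure of $K_2$ of a function field of a curve, that after possibly raising to a power there is an explicit relation $\{X,Y\}=\sum_k n_k\{f_k,1-f_k\}$ for finitely many rational functions $f_k$ on $\widehat C$. Each pair $(f_k,1-f_k)$ should be interpreted as the shape parameter of an ideal tetrahedron, and the integers $n_k$ (with signs) as a formal sum of tetrahedra; the relation $\{X,Y\}=\sum n_k\{f_k,1-f_k\}$ together with the vanishing of the corresponding tame symbols at every point of $\widehat C$ (which is where \emph{temperedness}, hence Proposition \ref{tempere} and the exactness hypothesis, enters) should force the gluing equations and completeness-type equations around the edges to be satisfied generically along the curve. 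One then invokes the existence of an actual topological object — a pseudo-manifold, or better an honest ideal triangulation after subdivision — realizing this tetrahedral complex with one torus boundary component, whose gluing variety has a component on which the meridian/longitude eigenvalues restrict to $X,Y$; its $A$-polynomial then has $P$ as a factor.

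The hard part will be turning the purely algebraic $K_2$-relation into a genuine \emph{manifold}. A formal $\Z$-linear combination of tetrahedra with matching face identifications is only a pseudomanifold; to get a manifold with toric boundary one must control the links of the vertices (they should be tori or spheres, not higher genus surfaces) and eliminate negatively-oriented tetrahedra, typically by the standard moves (2--3 Pachner moves, adding cancelling pairs of tetrahedra, or passing to a branched/ideal triangulation of a drilled manifold) used in the proof that every cusped manifold deformation variety of this combinatorial type is realized. I would structure this as: (i) produce the tetrahedral datum from the $K_2$ relation; (ii) check edge (gluing) consistency using temperedness at finite and ideal points; (iii) repair the combinatorics to a bona fide ideal triangulation of a one-cusped manifold $M$; (iv) identify the component of the gluing variety of $M$ carrying $(X,Y)$ and conclude $P\mid A_M$. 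Step (iii) is the genuine obstacle and is exactly the point where the argument of Ghys is needed; everything else is bookkeeping with symbols and Puiseux expansions already set up in Sections 1--2.
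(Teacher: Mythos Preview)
Your forward direction is essentially the paper's: both reduce to the computation in \cite{CCGLS94}. The paper adds one detail you omit: the relation $2\{X,Y\}=0$ is obtained in $K_2(E)$ where $E$ is the function field of a component of the representation variety dominating $\{P=0\}$, and one then descends to $K_2(F_P)\otimes\Q$ using that the transfer map makes $K_2(F_P)\to K_2(E)$ injective modulo torsion.

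For the converse, your approach diverges sharply from the paper's, and the gap you yourself flag at step~(iii) is real and not filled by your proposal. You want to interpret a relation $\{X,Y\}=\sum n_k\{f_k,1-f_k\}$ as the gluing data of an ideal triangulation and then ``repair'' the resulting pseudo-complex into a genuine 3-manifold with a single torus cusp. There is no general procedure that does this: a formal $\Z$-combination of tetrahedra satisfying a $K_2$ identity need not assemble into anything with torus vertex links, and Pachner moves do not convert arbitrary pseudo-manifolds into manifolds. Your appeal to temperedness and Proposition~\ref{tempere} is also misplaced here: the proposition carries no exactness hypothesis, and the torsion of tame symbols (which does follow from localisation once $\{X,Y\}$ is torsion) constrains putative edge equations but does not produce a manifold.

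The paper's argument avoids all combinatorics. After passing to a finite extension $E/F_P$ where $2\{X,Y\}=0$ holds exactly in $K_2(E)$ (using that $K_2(\overline{F_P})$ is divisible, by Bass--Tate), one defines the diagonal representation $\rho:\pi_1(S^1\times S^1)\to\SL_2(E)$ with eigenvalues $X,Y$ and observes that $\rho_*[S^1\times S^1]=2\{X,Y\}$ under the identification $H_2(\SL(E),\Z)=K_2(E)$. Sah's stability theorem gives $\rho_*[S^1\times S^1]=0$ already in $H_2(\SL_2(E),\Z)$. Since oriented bordism $\Omega_2$ of the classifying space $B\SL_2(E)$ agrees with $H_2$, this vanishing means the map $S^1\times S^1\to B\SL_2(E)$ bounds: there is a compact oriented 3-manifold $M$ with $\partial M=S^1\times S^1$ and a representation $\tilde\rho:\pi_1(M)\to\SL_2(E)$ extending $\rho$, whence $P\mid A_M$. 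The ``argument of Ghys'' the introduction refers to is this homological/bordism mechanism, not a tetrahedral construction.
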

\begin{proof}
Suppose that $P$ is a factor of the $A$-polynomial of $M$. This implies that there exists a curve $C$ in the representation variety $\homo(\pi_1(M),\SL_2(\overline{\Q}))$ whose restriction is dense in the zero set of $P$. Consider $E$ the function field of $C$: it is a finite extension of $F_P$ and there is a tautological representation $\rho:\pi_1(M)\to \SL_2(E)$. Following the argument of \cite{CCGLS94}, p.59 it follows that $2\{X,Y\}=0$ in $K_2(E)$. As the map $K_2(F_P)\to K_2(E)$ is injective modulo torsion thanks to the transfer map, we conclude that $\{x,y\}=0\in K_2(F_P)\otimes \Q$. 

Reciprocally, suppose that $\{X,Y\}=0\in K_2(F_P)\otimes\Q$. Then, $K_2(\overline{F_P})=\lim\limits_{\to}K_2(E)$ where $E$ ranges over the finite extension of $F_P$ and it is known (Bass-Tate theorem) that this group is divisible, hence $2\{X,Y\}=0\in K_2(E)$ for some finite extension $E$ of $F_P$. 
Let $\rho:\pi_1(S^1\times S^1)\to \SL_2(E)$ be the representation that send $l$ to the diagonal matrix with entries $X,X^{-1}$ and $m$ to the diagonal matrix with entries $Y,Y^{-1}$. Then, $\rho_*([S^1\times S^1])=2\{X,Y\}=0\in H_2(\SL(E))=K_2(E)$. By Sah stability theorem (see \cite{W13}) this implies that $\rho_*([S^1\times S^1])=0\in H_2(\SL_2(E))$. By cobordism theory applied to the classifying space BSL$_2(E)$, this shows that there exists a manifold $M$ bounding $S^1\times S^1$ and a representation $\tilde{\rho}:\pi_1(M)\to \SL_2(E)$ extending $\rho$. By construction, $P$ will be a factor of $A_M$. 
\end{proof}

Hence, it is easy to recognise $A$-factors of genus 0 because by the localization formula for $K_2$ (see \cite{W13}, p.257), the criterion for being an $A$-factor reduces to the condition that all tame symbols $\{X,Y\}_z$ are torsion. Thus we get the following corollary:
\begin{coro}
An irreducible polynomial $P\in  \overline{\Q}[X^{\pm},Y^{\pm}]$ of genus 0 is an $A$-factor if and only if the roots of its side polynomials are roots of unity.
\end{coro}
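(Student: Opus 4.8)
The plan is to combine the Proposition we have just proved (that $P$ is an $A$-factor iff $\{X,Y\}=0$ in $K_2(F_P)\otimes\Q$) with the localization exact sequence for $K_2$, which in genus $0$ makes the group $K_2(F_P)\otimes\Q$ completely transparent. Write $\hat C\cong\P^1$ for the smooth projective model, so that $F_P=\overline\Q(\hat C)=\overline\Q(T)$ for a rational parameter $T$. The localization sequence (see \cite{W13}, p.257) reads
\[
K_2(\overline\Q)\longrightarrow K_2(F_P)\xrightarrow{\ \bigoplus_z \partial_z\ }\bigoplus_{z\in\hat C}\overline\Q(z)^*=\bigoplus_{z\in\hat C}\overline\Q^*,
\]
where $\partial_z$ is the tame symbol at $z$. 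Tensoring with $\Q$ and using that $K_2(\overline\Q)$ is a torsion group (a theorem of Bass--Tate / Garland, since $\overline\Q=\bigcup$ number fields), the left-hand term dies, so $\{X,Y\}=0$ in $K_2(F_P)\otimes\Q$ if and only if $\partial_z(\{X,Y\})$ is torsion in $\overline\Q^*$ for every $z$, i.e.\ if and only if the tame symbol $\{x,y\}_z$ is a root of unity for every $z\in\hat C$.

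The second step is to identify, for each $z$, when $\{x,y\}_z$ is a root of unity. At a finite point $z$ (one mapping to $(\C^*)^2$, where neither $x$ nor $y$ has a zero or pole) both valuations vanish, so $\{x,y\}_z=1$ and there is nothing to check. So only the ideal points $z\in\hat C\setminus\overline C$ matter, and at such a point, using the Newton--Puiseux computation already carried out before Proposition~\ref{tempere}, we have $\{x,y\}_z=(-1)^{pq}F(0)^{-p}$, where $x=t^p$, $y=t^qF(t)$ is the local expansion and $F(0)$ is, by the computation in the proof of the proposition characterizing tempered polynomials, precisely a root of the side polynomial $P_s$ attached to the corresponding side $s$ of the Newton polygon. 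Hence $\{x,y\}_z$ is a root of unity exactly when $F(0)$ is, and conversely every root of every side polynomial arises as some $F(0)$. Therefore the condition ``$\{x,y\}_z$ torsion for all $z$'' is equivalent to ``all roots of all side polynomials of $P$ are roots of unity.''

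Putting the two steps together yields the corollary. I do not expect any serious obstacle here: both ingredients are essentially quoted (the localization sequence from \cite{W13}, the torsion of $K_2$ of a number field, and the tame-symbol computation from Section~1). The one point requiring a sentence of care is why torsion in $\overline\Q^*$ means ``root of unity'' — but the torsion subgroup of $\overline\Q^*$ is exactly the group of roots of unity, so $\partial_z(\{X,Y\})\otimes 1=0$ in $\overline\Q^*\otimes\Q$ precisely when $\{x,y\}_z$ is a root of unity. It is also worth remarking that since $P$ is exact (indeed one only needs tempered), the side polynomials already have all roots of modulus $1$, so the extra content of being an $A$-factor in genus $0$ is precisely the Kronecker-type strengthening from ``modulus $1$'' to ``root of unity''.
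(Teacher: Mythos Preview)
Your proof is correct and follows exactly the approach the paper sketches in one sentence: the paper simply asserts that ``by the localization formula for $K_2$ (see \cite{W13}, p.257), the criterion for being an $A$-factor reduces to the condition that all tame symbols $\{X,Y\}_z$ are torsion,'' and you have unpacked this into the explicit localization sequence, the vanishing of $K_2(\overline\Q)\otimes\Q$, and the identification of the tame symbols at ideal points with (powers of) roots of side polynomials via the Newton--Puiseux computation from Section~1. Your added remark contrasting the tempered condition (roots of modulus $1$) with the $A$-factor condition (roots of unity) is a nice gloss not in the paper.
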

It follows that the polynomials $X+Y-1,1+X+Y+XY+X^2+Y^2$ are $A$-factors but we don't know to which 3-manifolds they correspond. Indeed, the simplest non-trivial genus 0 $A$-factor we know is $1+iX+iY+XY$ which corresponds to the suspension of a punctured torus over the circle with monodromy $\begin{pmatrix}-1 & -2 \\ -2 & -5\end{pmatrix}$, see \cite{Dun99}.

\subsection{Examples}\label{ss:examples}

We review here some examples of exact polynomials given in the first section
and prove that they are indeed $A$-polynomials. We then proceed with the computations of their Mahler measure.

\subsubsection{$P_1 = X+Y-1$}

This example is directly related to the original one of Smyth \cite{Smyth}. On the curve defined by $P_1$, we have $\{x,y\} = \{x,1-x\} = 0$ in $K_2(F_{P_1})$ and
the previous proposition shows $P_1$ is an $A$-factor.

It is easy to see that the volume function on the curve is given by $-D(x)$. Indeed, we recognize its differential in the following expression: 
$$\eta(x,1-x)= \log|1-x|d\arg(x) - \log|x|d\arg(1-x).$$
Moreover, the only point of the curve on the torus are given by $x = e^{\pm \frac{i\pi}{3}}$.
They are the maximum and minimum points of the volume, with the value being plus or minus the volume $v_3$ of the regular ideal hyperbolic ideal. This whole 
discussion is done in \cite{BoydRodriguezVillegas} and builds upon a computation
by Smyth \cite{Smyth}. In these references, a further number theoretic description of the Mahler measure is also given. 

It is easy to see that the volume above the circle $|x|=1$ has no critical points: the Mahler measure is then computed using section \ref{mahler}:
$$m(P_1) = \frac{v_3}{\pi}.$$

\subsubsection{$P_2(X,Y) = Y-\phi_5(X)$}

This is an instance of examples treated by Boyd and Rodriguez-Villegas \cite{BRV-I}. On the curve $C_2$ defined by $P_2$, we have $\{x,y\} = \frac{1}{5}\{x^5,1-x^5\} - \{x,1-x\} = 0$ in $K_2(F_{P_1})$ and
the previous proposition shows $P_2$ is a (factor of) a $A$-polynomial.

As before, we see that the function $x \to V_2(x) = D(x)-\frac{1}{5} D(x^5)$ is the volume function. We  plot this function above the circle $|x|=1$ in Figure \ref{fig:P2}.
\begin{figure}[ht]
\begin{center}
\includegraphics[width = 8cm]{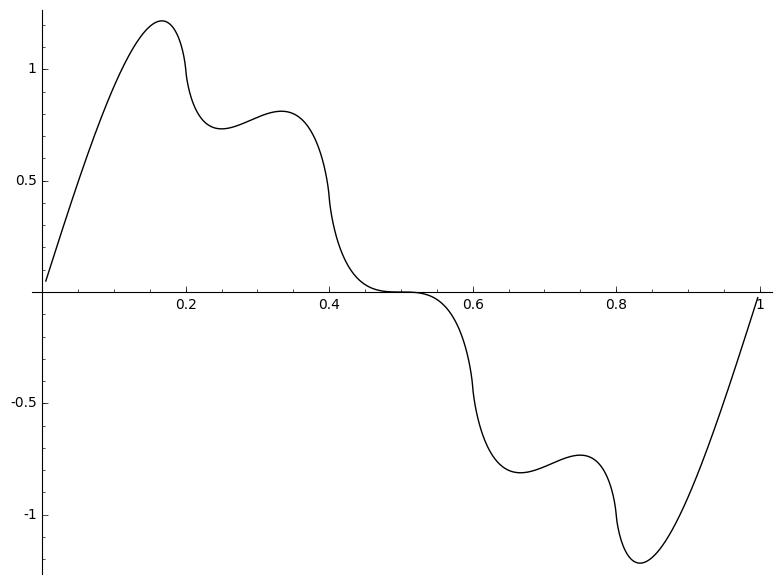}
\end{center}
\caption{Volume function for $P_2(x,y)$ above the circle $|x|=1$}\label{fig:P2}
\end{figure}

The intersection of $C_2$ with the torus $|x|=|y|=1$ consists of $7$ points, corresponding to $x=- 1, \pm i, \omega = \frac{1+i\sqrt{3}}{2},\omega^2,\omega^4,\omega^5$. This points are easily spotted on the figure as they correspond to critical points of the volume function. From the figure, it is quite clear that 
$$2\pi m(P_2) = V_2(\omega) - V_2(i) + V_2(\omega^2) - V_2(\omega^4) +V_2(i) -V_2(\omega^5).$$
Let us prove this from the Formula \ref{formule-generale}. The logarithmic Gauss map at these points may be computed: 
$$X\partial_X P_2 = -(X+2X^2+3X^3+4X^4)\textrm{ and } 
Y\partial_YP_2 = Y=1+X+X^2+X^3+X^4.$$
We then decide if the point is in $\P^1_+(\C)$, $\P^1(\R)$ or $\P^1_-(\C)$. Table \ref{table1} display the computation of this sign for $\omega$, $i$, $\omega^2$, $-1$. The three remaining points $\omega^4$, $-i$, $\omega^5$ are the complex conjugates of $\omega$, $i$, $\omega^2$, so the table is easily filled.

\begin{table}
\begin{center}
\begin{tabular}{c|c|c|c|c|}
$x$ & $\omega$ &  $i$ & $\omega^2$ & $-1$ \\
\hline
$\gamma(x,\phi_5(x))$ & $[-\frac 52 - 3i\sqrt{3},1]$ & $[-2+2i,1]$ & $[-2-i\sqrt{3},1]$ & $[-2,1]$ \\
\hline
sign & $\P^1_-(\C)$ & $\P^1_+(\C)$ & $\P^1_-(\C)$ & $\P^1(\R)$ 
\end{tabular}
\end{center}
\caption{Slope of points in the torus}\label{table1}
\end{table}
The sign is in accordance with what is shown on the figure: at a point in 
$\P^1_+(\C)$, the volume exhibits a local minimum. The point $-1$ of real slope is 
easily discarded in the formula: whatever its index is, its volume vanishes. So it does not contribute to the Mahler measure. Its index is indeed $0$: at this point, there is only one maximally tangent curve, which is monotonous.

Using the explicit formula for $V_2$, we get:
$$m(P_2) = \frac{2}{5\pi} \left(3D(\omega) +3D(\omega^2) - 2D(i)\right)$$

\subsubsection{$P_3(X,Y) = 1 + X + Y + XY + X^2 + Y^2$}
In this example, the curve $C_3$ has no real points. However we know that it must have points on the torus, 
as $P_3$ is exact. Indeed we find 8 such points: $(-1,i)$, $(-1,-i)$, $(i,-1)$, $(-i,-1)$, $(i,-i)$, $(-i,i)$,
$(\omega^2,\omega^4)$, $(\omega^4,\omega^2)$. All these points have non real slope and no ramification hence 
we get using symmetries and denoting by $V_3$ the volume function -- which is well-defined, see remark 
\ref{vol:real}:
$$ 2\pi m(P_3)=4V_3(-1,-i)+2V_3(i,-i)+2V_3(\omega^4,\omega^2).$$
This time, we do not provide an explicit computation using the Bloch-Wigner dilogarithm although such an expression should exist. 

\subsubsection{$P_4(X,Y) = 1 + iX + iY + XY$}

This example is (almost) already computed in \cite[Example 9]{BoydRodriguezVillegas}.
Here, using the same techniques as described above, we get that $\{x,y\} =\{-ix,1+ix\}-\{ix,1-ix\}$ vanishes modulo torsion on the curve defined by $P_4$. Moreover, it follows that the volume function is $D(ix)-D(-ix)$ for any point $(x,y)$ in the curve.

The intersection between the torus and the curve has two points, corresponding to 
$x = \pm 1$. It is quite straightforward that $\pi m(P_4) = 2D(i)$.

\subsubsection{A-polynomial of $m337$}

Using Culler's \verb|PE| tool, one can compute the A-polynomial for the manifold $m337$ in Snappy. It factorizes in two (almost identical) factors, each of bidegree $(20,13)$. Note that the curve $C$ defined by this $A$-polynomial is invariant by change of sign of any variable, by $(x,y)\to (1/x,1/y)$ and $(x,y)\to (\bar x,\bar y)$, so we prefer to work with the polynomial whose zero set consists of the points $(x^2,y^2)$, hereafter denoted by $P$. The multi-graph of the volume above $|x|=1$ (here $x$ is the eigenvalue of the meridian for snappy) is displayed in Figure \ref{fig:m337}. 

\begin{figure}[ht]
\begin{center}
\includegraphics[width = 8cm]{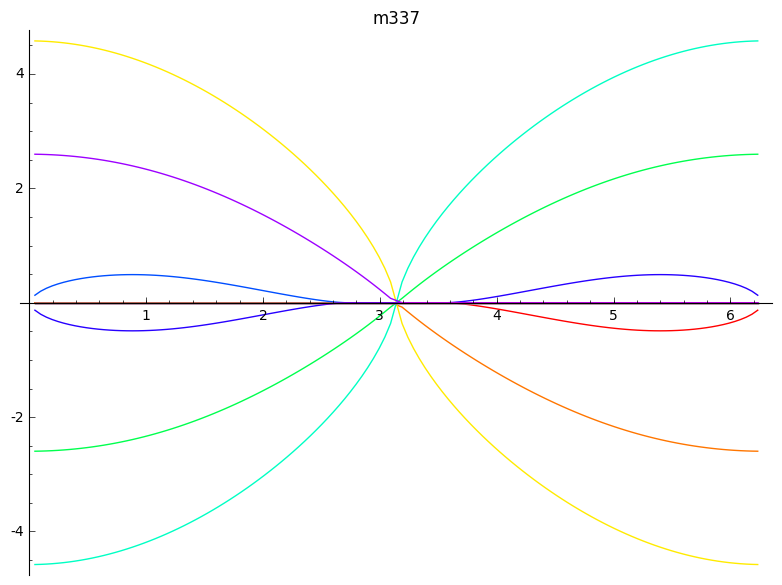}
\end{center}
\caption{Volume function for the A-polynomial of $m337$ above the circle $|x|=1$. The horizontal axis is $\arg(x)$.}\label{fig:m337}
\end{figure}

As is quite clear from the figure, the Mahler measure of this polynomial is 
(up to a factor $\pi$) of the form $v_1 + v_2 + 2v_3$ where $v_1$ and $v_2$ are the two 
positive volume above $x=1$ (i.e. the intersection with the vertical axis in the picture)
and $v_3$ is the common value of the two local maxima. 

The local maximum $v_3$ corresponds to an intersection between the curve $P=0$ and the torus, which does not lie above $x=1$. One can plot this intersection: it has a $1$-dimensional part and a $0$-dimensional part corresponding to the set of singular points of the algebraic curve that sit inside the torus, see Figure \ref{fig:m337-2}.

\begin{figure}[ht]
\begin{center}
\includegraphics[width = 8cm]{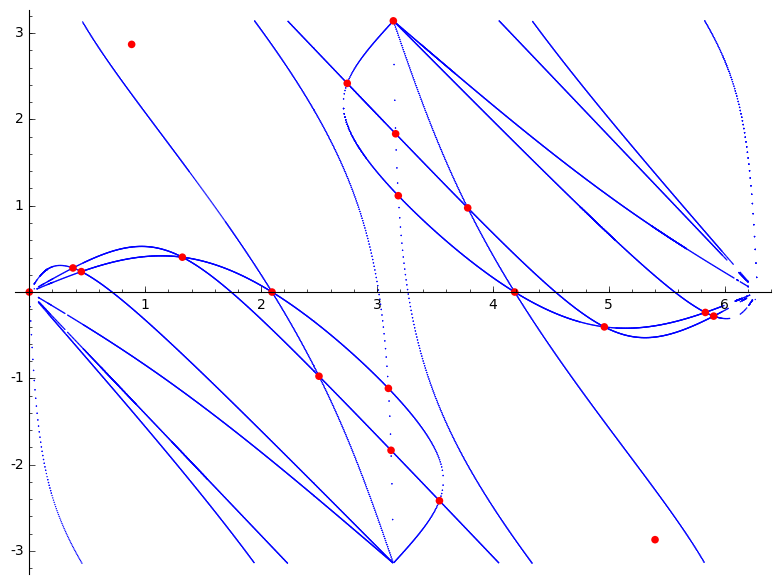}
\end{center}
\caption{Points on the torus (axis are the arguments of $x$ and $y$). The $1$-dimensional part is displayed in blue, the singular points in red.}\label{fig:m337-2}
\end{figure}

We find then $2$ singular points outside of the $1$-dimensional part, of coordinates $(x,y)$ where $x$ and $y$ are both algebraic numbers of degree $12$ in the same number field. They are not roots of unity and
the number field has 5 complex places. Note that the two values of $x$ are the points where the local maxima in Figure \ref{fig:m337} are attained. These two singular points are complex conjugate, so we may just study one of them. We can check, using for example SageMath, that 
two branches of $C$ goes through these points: they correspond to two distinct points in $\hat C$. One then check that the two slopes are not real and are complex conjugate: it explains the local maximum and the local minimum for the volume above this value of $x$. The other singular points inside the torus are easily seen not to contribute to the Mahler measure: at each of this point, there are two maximally tangent directions which are clearly real.

It is possible to express $v_1$, $v_2$ and $v_3$ as sums of dilogarithms of algebraic numbers, solving the gluing equations for $m337$. We do not give explicit details here.
An approximate value is:
$$\pi m(P) = v_1 + v_2 + 2v_3 = 8.1594511763 \pm 10^{-10}.$$ 
A detailed description of this computation will appear in an independent paper.

\subsection{An interpretation of the Mahler measure of the $A$-polynomial}
For $A\in \SL_2(\C)$ we denote by $||A||$ its spectral radius, that is the maximal modulus of an eigenvalue of $A$. 
\begin{definition}
Let $M$ be a closed oriented $3$-manifold and $K$ be a knot in $M$. Suppose that the character variety $X(M)$ is reduced of dimension 0. Then we set 
\[m(M,K)=\sum_{[\rho]\in X(M)} \log ||\rho(K)||\]
\end{definition}

Given an oriented manifold $M$ with toric boundary $S^1\times S^1$, we denote by $M_{p/q}$ the Dehn surgery with slope $p/q$ that is $M_{p/q}=M\cup_\phi D^2\times S^1$ where $\phi:\partial D^2\times S^1\to\partial M$ is given by $\phi(z,1)=(z^p,z^q)$. We denote by $K_{p/q}$ the knot $\{0\}\times S^1$ viewed in $M_{p/q}$.

\begin{proposition}\label{topinterpretation}
Suppose that $M$ is a manifold with $\partial M=S^1\times S^1$ with $A$-polynomial $A_M$ satisfying the following hypothesis:
\begin{enumerate}
\item The restriction map $r:X(M)\to X(\partial M)$ is birational on its image. 
\item The singular points of $X(M)$ do not restrict to torsion points in $X(\partial M)$. 
\end{enumerate}
Then we have 
\[\lim_{p^2+q^2\to\infty}m(M_{p/q},K_{p/q})=m(A)\]
\end{proposition}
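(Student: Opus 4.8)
The plan is to identify the characters of $\pi_1(M_{p/q})$ explicitly, to compute $\log\|\rho(K_{p/q})\|$ in terms of the $A$--polynomial, and then to recognise the resulting sum as a one--variable Mahler measure which converges to $m(A_M)$ by a Boyd--Lawton type equidistribution argument.

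First I would describe $X(M_{p/q})$. Fixing a basis $\mu,\lambda$ of $\pi_1(\partial M)$, the group $\pi_1(M_{p/q})$ is the quotient of $\pi_1(M)$ by the normal closure of $\mu^p\lambda^q$, and $K_{p/q}$ is freely homotopic to $\mu^r\lambda^s$ for any $(r,s)$ with $ps-qr=\pm1$. For $p^2+q^2$ large, Thurston's Dehn surgery theorem together with the Culler--Shalen description of $X(M)$ (see \cite{CCGLS94}) shows that $X(M_{p/q})$ is reduced of dimension $0$, so that $m(M_{p/q},K_{p/q})$ makes sense. Its reducible characters are finitely many, they factor through the (finite, for such $(p,q)$) group $H_1(M_{p/q})$, on which $\rho(K_{p/q})$ acquires eigenvalues that are roots of unity, hence contribute $0$. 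Its irreducible characters restrict, by hypothesis (1), birationally onto the points of the affine curve $\{A_M(x,y)=0\}\subset(\C^*)^2$ (in eigenvalue coordinates) that satisfy the filling equation $x^py^q=1$, the two points of an orbit of the involution $\iota:(x,y)\mapsto(x^{-1},y^{-1})$ giving the same character of $M_{p/q}$ (they restrict to the same point of $X(\partial M)$, over which (1) produces a single character of $M$, which descends because $x^py^q=1$ depends only on the $\iota$--orbit). Hypothesis (2) guarantees that the finitely many singular points of $X(M)$ restrict to non--torsion points, hence are filling characters for only finitely many coprime $(p,q)$, so that for all but finitely many $(p,q)$ this correspondence is an honest bijection onto its image and no scheme--theoretic pathology occurs.

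Next I would parametrise the filling locus: since $\gcd(p,q)=1$, the curve $\{x^py^q=1\}$ is the image of $u\mapsto(u^q,u^{-p})$, and at such a point $x^ry^s=u^{qr-ps}=u^{\mp1}$, so $\log\|\rho(K_{p/q})\|=|\log|u||$. Setting $g_{p,q}(u):=A_M(u^q,u^{-p})\in\Z[u^{\pm1}]$, the filling characters thus correspond to the $\iota$--orbits (i.e.\ pairs $\{u,u^{-1}\}$) of roots of $g_{p,q}$. Because the zero set of $A_M$ is $\iota$--invariant, $A_M(x^{-1},y^{-1})$ equals $A_M(x,y)$ up to a monomial, so $g_{p,q}$ is palindromic up to a monomial and its roots come in pairs $\{u,u^{-1}\}$; combining this with Jensen's formula gives, for all but finitely many $(p,q)$,
\[
m(M_{p/q},K_{p/q})=\tfrac12\!\!\!\sum_{g_{p,q}(u)=0}\!\!\!|\log|u||\;=\!\!\!\sum_{g_{p,q}(u)=0}\!\!\!\log^+|u|\;=\;m(g_{p,q})-\log|\mathrm{lc}(g_{p,q})|.
\]
For all but finitely many directions $[p:q]$ the quantity $\max\{qi-pj:(i,j)\in\Delta\}$ over the Newton polygon $\Delta$ of $A_M$ is attained at a vertex, so $\mathrm{lc}(g_{p,q})$ is a corner coefficient of $A_M$; and corner coefficients of an $A$--polynomial have modulus $1$, because (by the $K$--theoretic/tame--symbol property used in Subsection~\ref{ss:Ktheory}) the side polynomials of $A_M$ have all their roots equal to roots of unity and hence are $\pm$ products of cyclotomic polynomials. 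Therefore $m(M_{p/q},K_{p/q})=m(g_{p,q})$ for all but finitely many $(p,q)$, and it only remains to show $m(g_{p,q})\to m(A_M)$ as $p^2+q^2\to\infty$.

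Writing $m(g_{p,q})=\frac1{2\pi}\int_0^{2\pi}\log|A_M(e^{iq\theta},e^{-ip\theta})|\,d\theta$, this is the integral of $\log|A_M|$ against the normalised arc--length measure $\nu_{p,q}$ of the closed geodesic of direction $(q,-p)$ on the flat torus $S^1\times S^1$. An elementary Fourier computation (using $\gcd(p,q)=1$ and $p^2+q^2\to\infty$) shows $\nu_{p,q}\to$ Haar measure in the weak--$*$ sense, so the bounded continuous part $\log^+|A_M|$ integrates correctly, and lower semicontinuity already yields $\limsup m(g_{p,q})\le m(A_M)$. The crux is the behaviour near the singular locus $Z=\{A_M=0\}\cap(S^1\times S^1)$, which is nonempty since $A_M$ is exact (Theorem~\ref{intertore}): one has the uniform bound $\int_0^{2\pi}\log^-|A_M(e^{iq\theta},e^{-ip\theta})|\,\frac{d\theta}{2\pi}\le C(A_M)$, obtained by clearing the monomial factor of $g_{p,q}$ and using that a nonzero integer polynomial has nonnegative Mahler measure, so no mass blows up. The hard part will be to upgrade this to the uniform integrability of $\{\log^-|A_M|\}$ against $\{\nu_{p,q}\}$, i.e.\ to show $\sup_{p,q}\nu_{p,q}\big(\{\,|A_M|<\varepsilon\,\}\big)\to0$ with the right rate as $\varepsilon\to0$; this is where one must use that $Z$ is a fixed real--algebraic curve of bounded complexity, controlling in particular the (finitely many, for each $(p,q)$) near--tangencies of the geodesic to $Z$. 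With this in hand one gets $\liminf m(g_{p,q})\ge m(A_M)$ as well, whence $m(g_{p,q})\to m(A_M)$ and the proposition follows.
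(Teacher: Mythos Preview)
Your argument follows essentially the same route as the paper's: identify the irreducible characters of $M_{p/q}$ with solutions of $A_M(x,y)=0$, $x^py^q=1$, parametrise the sub-torus by a single variable, express $m(M_{p/q},K_{p/q})$ as the one-variable Mahler measure $m(A_M(t^{-q},t^p))$, and then pass to the limit. You are more careful than the paper on several points: you discuss why reducible characters contribute nothing, and you explain why the leading coefficient of $g_{p,q}$ has modulus~$1$ (the paper writes $\tfrac12\sum|\log|t||=m(A(t^{-q},t^p))$ without comment, silently using that the corner coefficients of $A_M$ are units).

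The gap is in your final paragraph. The convergence $m\big(A_M(t^{-q},t^p)\big)\to m(A_M)$ as $p^2+q^2\to\infty$ with $\gcd(p,q)=1$ is precisely Lawton's theorem (sometimes called Boyd--Lawton), and the paper invokes it in one line as ``the integral formula for the Mahler measure shows that this quantity converges to $m(A)$''. You instead try to reprove it from scratch via equidistribution of long geodesics and uniform integrability of $\log^-|A_M|$, and you explicitly leave the hard step (``upgrade this to the uniform integrability\ldots controlling the near-tangencies'') undone. That step is genuinely nontrivial---it is essentially the content of Lawton's proof, which proceeds by a clever induction reducing to the one-variable case rather than by a direct geometric tangency estimate. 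Your uniform bound $\int\log^-|A_M|\,d\nu_{p,q}\le C$ coming from $m(g_{p,q})\ge 0$ is correct but, as you note, does not by itself yield the $\liminf$ inequality. So either cite Lawton's theorem at this point (which is what the paper does, albeit cryptically), or be aware that completing the uniform-integrability argument is a real piece of work and not just bookkeeping.
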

\begin{proof}
A theorem of Dunfield states that the first assumption holds for any component of the character variety which contains a lift of a discrete and faithful representation (see \cite{Dun}) (notice that we assume implicitly that $X(M)$ is irreducible and reduced). This can be weakened as in \cite{le}. The roots of the Alexander polynomial of $M$ corresponds to singular points of $X(M)$, hence we assume that this polynomial does not vanish at roots of unity. These hypothesis are already present in \cite{marchemaurin} where they serve similar purposes. 

Let $\gamma_{p,q}\subset \partial M$ be the curve parametrised by $\gamma_{p/q}(z)=(z^p,z^q)$. As $\pi_1(M_{p/q})=\pi_1(M)/(\gamma_{p,q})$, a representation $\rho:\pi_1(M_{p/q})\to\SL_2(\C)$ corresponds to a representation $\rho:\pi_1(M)\to \SL_2(\C)$ such that $\rho(\gamma_{p/q})=1$. 
Denoting by $l,m$ the homotopy classes of $S^1\times\{1\}$ and $\{1\}\times S^1$ respectively, one can suppose that 
\[\rho(l)=\begin{pmatrix} x & * \\ 0 & x^{-1}\end{pmatrix},\quad \rho(m)=\begin{pmatrix} y & * \\ 0 & y^{-1}\end{pmatrix}\]
with $A(x,y)=0$ and $x^p y^q=1$. If $x\ne \pm 1$ or $y\ne \pm 1$, the pair $(x^{-1},y^{-1})$ satisfies the same equation and corresponds to the same representation up to conjugation. 
Reciprocally, to a solution $(x,y)$ of the equations $A(x,y)=0,x^py^q=1$ corresponds generically one representation by the birational assumption. By the second assumption, points where there are more solutions map to non-torsion points, and hence satisfy at most one equation of the form $x^py^q$. Hence, we can neglect them in the limit. 
We will see that the case when $x=\pm 1$ and $y=\pm 1$ does not contribute to the result hence we discard them also.

Now, the core of the torus $D^2\times S^1$ is mapped through $\phi$ to a curve of the form $K(t)=(t^r,t^s)$ where $ps-qr=1$. Parametrizing the solutions of $x^py^q=1$ by setting $x=t^{-q},y=t^{p}$ we find that the eigenvalue of $\rho(K_{p,q})$ is $t$. Hence 
\[m(M_{p/q})=\frac{1}{2}\sum_{t\ne 0,A(t^{-q},t^p)=0} |\log|t||=m(A(t^{-q},t^p))\]
where $m$ denotes the usual Mahler measure. As $p^2+q^2$ goes to infinity, the integral formula for the Mahler measure shows that this quantity converges to $m(A)$ and the conclusion follows. 
\end{proof}

\bibliographystyle{alpha}
\bibliography{biblio}

\end{document}